\documentclass[11pt,letterpaper]{article}
\usepackage[T1]{fontenc}
\usepackage{lmodern}
\usepackage[margin=23mm]{geometry}
\usepackage{amsmath,amssymb,amsthm}
\usepackage{microtype}
\usepackage[titles]{tocloft}
\usepackage{parskip}
\usepackage{graphicx}
\usepackage{booktabs}
\usepackage[numbers,sort&compress]{natbib}
\usepackage[hidelinks,hypertexnames=false]{hyperref}
\usepackage{xurl}
\newcommand{\doi}[1]{doi:~\href{https://doi.org/#1}{\nolinkurl{#1}}}
\theoremstyle{definition}
\newtheorem{theorem}{Theorem}
\newtheorem{corollary}[theorem]{Corollary}
\newtheorem{proposition}[theorem]{Proposition}

\pdftrailerid{}
\hypersetup{
  pdftitle={Global minimax risk and acquisition laws for heterogeneous information fusion},
  pdfauthor={Armon Rasooli; Mohammad Sadegh Narimani},
  pdfcreator={},
  pdfproducer={}}

\title{Global minimax risk and acquisition laws for heterogeneous information fusion}
\author{Armon Rasooli\thanks{Corresponding author.
Email: \href{mailto:armonrasooli@gmail.com}{armonrasooli@gmail.com}.
ORCID: \href{https://orcid.org/0009-0002-6583-7090}{0009-0002-6583-7090}.}
\and Mohammad Sadegh Narimani\thanks{ORCID:
\href{https://orcid.org/0009-0004-8648-2238}{0009-0004-8648-2238}.}}
\date{}

\begin{document}

\maketitle

\begin{abstract}

We prove an all-allocation global target-risk theorem for independent
Gaussian sources that share a scalar nuisance and an unknown contact
coordinate. For a fixed immersed nuisance curve with finitely many
multiple fibres and pairwise nonparallel branch tangents, squared target
risk is comparable to a primary estimation floor plus a
target-gap-weighted Gaussian discrimination profile. Independent
localization,
finite branch selection and target-class refitting give an estimator
attaining this comparison across every nonnegative integer allocation. A
full-box specialization has positive-definite primary Fisher information
everywhere and globally identifies its target, yet models with identical
derivatives of every order along a critical hypersurface have different
polynomial risk exponents. We construct a finite certified estimator and
quantify the numerical accuracy needed to preserve rare branch decisions
and acquisition windows. A polynomial tangency family both demonstrates
the geometric boundary and quantifies its repair: known auxiliary gain,
source contact order and target vanishing order determine a risk law
uniform through zero gain. For a shared spherical direction, a
separately proved composite-testing result transfers through an unknown
contact coordinate using only counted observations. These theorems
distinguish the resources needed for local estimation, discrimination
between parameter regions and nuisance alignment, and yield acquisition
thresholds under explicit scalar-observation costs.

\end{abstract}

\textbf{Keywords:} information fusion; minimax estimation; inverse
problems; heterogeneous observations; identifiability; nuisance
parameters; certified computation

\clearpage
\tableofcontents
\clearpage

\section{Introduction}
\label{sec:main-1}

A fusion system often receives repeated observations from several known
response maps. One source may provide accurate local measurements while
another distinguishes parameter configurations that produce nearly
identical primary responses. The central design question is then
quantitative:
how many observations of each source are needed to estimate the desired
target, and which features of the response maps determine that
requirement? Answering this question requires preserving the common
parameter across the sources. Optimizing each source over an independent
nuisance parameter generally describes a different experiment.

Two familiar diagnostics are insufficient by themselves. Local
differential rank determines whether small displacements within a
parameter neighborhood change the noiseless response to first order.
Exact fibres, defined as sets of parameters with identical responses,
determine noiseless identification. Neither diagnostic measures the
relative rates at which source differences and target differences vanish
between distinct local neighborhoods. Those rates matter statistically
even when every exact fibre is target-constant and every local Fisher
information matrix is positive definite.

This paper develops a global target-risk analysis for that situation.
Its principal positive theorem concerns a fixed immersed scalar nuisance
curve with finitely many multiple fibres and pairwise nonparallel
tangent directions at distinct preimages. Additional sources measure a
common scalar amplitude through known functions of an independently
localized coordinate. The theorem derives a uniform risk comparison for
every allocation of nonnegative integer observation counts. The
comparison retains the exact Gaussian-tail scale and the target gap
associated with each actual competing pair. Its upper bound requires a
global estimator; a collection of binary lower bounds alone would not
establish the result.

A specialization places the mechanism on a full three-dimensional box
with an ordinary coordinate target. Its core map is everywhere
immersive, its only nuisance aliases have equal targets, and its source
maps can agree to every differential order along the entire critical
hypersurface.
Nevertheless,
the core-only squared-risk exponent ranges through the interval
\((0,1)\). This separation persists within each fixed nonanalytic Gevrey
class. The example isolates a global statistical obstruction that is
invisible to local rank and the complete critical jet, despite exact
target identification.

The construction also supports finite certified inference and explicit
acquisition results. The nuisance fit reduces to a quintic stationary
equation on a compact interval. Certified root isolation,
absolute-accuracy function evaluation, and controlled score arithmetic
retain the statistical comparison even near an exponentially sensitive
acquisition threshold. Under a declared total cost, the minimum resolver
allocation needed to attain a risk order differs from the optimizer of a
specified risk proxy. These are separate optimization questions.

Finally, finite branch multiplicity is a substantive restriction. With a
continuous spherical direction nuisance, minimum cross-target distance
does not determine the Gaussian prefactor. We therefore examine a
separately specified calibration experiment. An independently observed
noisy direction reference changes composite discrimination while leaving
the least cross-label distance unchanged. A counted attenuation
construction transfers its fixed-radius minimax law through an unknown
contact coordinate, without assuming that an estimated-radius
substitution is statistically exact.

The contribution has four parts: an all-allocation risk law derived from
finite branch geometry; constructive inference with sufficient numerical
precision; flat and tangential boundaries, including a gain-uniform
auxiliary repair law; and a shared-direction extension that isolates
composite nuisance alignment. The tangency repair is a quantitative
continuation of the boundary example: its explicit shape-and-sign
estimator works uniformly even when the known gain vanishes. Together,
these results turn branch geometry into explicit acquisition laws and
identify the additional information needed when tangency or a continuous
nuisance changes the discrimination problem.

\section{Related work and contribution}
\label{sec:main-2}

Testing lower bounds and deterministic moduli provide a foundational
connection between inverse geometry and statistical risk.
Optimal-recovery theory characterizes estimation through geometric
continuity properties \cite{Donoho1994}, and dual formulations of Le Cam's method
relate testing bounds to matching estimators \cite{PolyanskiyWu2026}. Gaussian testing
and recovery over convex sets, including unions of convex components and
independent product
observations,
give powerful tools for heterogeneous experiments \cite{GoldenshlugerJuditskyNemirovski2015,Guigues2020,Juditsky2020}. The
present work addresses a complementary setting: fixed nonlinear mean
images with retained nuisance ambiguities, target coalescence and
unrestricted relative source counts.

The main advance is a matching global minimax theorem derived directly
from finite-fibre geometry. It identifies the complete target-weighted
Gaussian discrimination profile, proves its attainability for every
nonnegative integer allocation and separates it from an independent
primary-source estimation floor. This conclusion requires control of all
approximately indistinguishable parameter pairs, including parameters
near different branches. The target-value-class refit is essential when
distinct nuisance branches have the same limiting target. These features
go beyond a collection of binary lower bounds or a local information
calculation.

Nonconvexity matters in this comparison. Polyanskiy and Wu give an
example in which a small-radius modulus fails to characterize quadratic
risk \cite[Example~4]{PolyanskiyWu2026}. Our mean images are generally nonconvex;
convexifying them can remove the branch separation that controls their
risk. The theorem instead obtains a quantitative all-pairs alternative
from the fixed curve and uses it to preserve the Gaussian-tail scale in
a global upper bound. Its count quantifier includes allocations outside
every fixed polynomial relationship between source budgets.

Geometric fusion has a substantial foundation. Robinson and Ghrist use
multijet transversality to study ambiguity dimensions and heterogeneous
localization,
with embedding results that also yield uniform inverse-neighborhood
continuity \cite{RobinsonGhrist2012}. Here the multiple fibres are retained, and the goal
is a quantitative risk and acquisition law for a fixed map.
Target-displacement-weighted ambiguity terms also occur in approximate
maximum-likelihood error analyses \cite{Mallat2014}. Our result supplies a
matching minimax
characterization,
including a complete estimator
construction,
under explicit geometric hypotheses.

Nonlinear inverse problems already exhibit response-dependent rates
\cite{RaySchmidtHieber2016}, and smooth flat functions have a developed ultradifferentiable
theory \cite{Sanz2014}. The full-box examples sharpen their statistical
implications:
source maps with the same critical derivatives of every order, exact
fibres, target and local rank can have different global minimax
exponents. This separation persists within each fixed nonanalytic Gevrey
class. The tangency theorem then gives a quantitative repair law
governed jointly by source contact order, target vanishing order and
known auxiliary gain, uniformly down to zero gain.

The constructive analysis builds on polynomial root isolation \cite{Sagraloff2016},
certified polynomial optimization \cite{KlepPovhVolcic2018} and classical derivative
control for nonnegative smooth functions \cite{Glaeser1963}. Its contribution is
the statistical accuracy theorem: finite nuisance fitting and function
evaluation preserve rare branch decisions and the associated acquisition
window. The proof connects numerical precision to the risk scale at
which it is consequential.

Classical phase-reference detection and passive-sensing models provide
antecedents for inference with a noisy reference \cite{Lindsey1966,Cui2015}. The
calibration results here concern a fixed-norm shared direction, an
invariant minimax equalizer and a uniform transfer through an unknown
contact coordinate. All localization and attenuation observations are
counted. This gives a precise source-budget law for nuisance alignment
and explains why the minimum cross-target distance, even together with
orbit dimension, can miss an unbounded discrimination penalty.

\section{Experiment and a global finite-fibre theorem}
\label{sec:main-3}

All observations in this paper are real Gaussian blocks with independent
unit-variance coordinates. The response coordinates are dimensionless
under the declared normalization. Counts refer to independent blocks; a
source with several coordinates contributes several scalar measurements
per block. For source means \(G_j(\theta)\) and counts \(n_j\), the
Kullback--Leibler divergence between two common parameters is 
\begin{equation}
D_{\mathrm{KL}}(\theta\Vert\theta')
=\frac12\sum_j n_j\|G_j(\theta)-G_j(\theta')\|^2. \tag{1}
\end{equation}
 The norm is Euclidean. The notation \(f\asymp g\) means that their
ratio lies between positive constants over the stated range; \(f\sim g\)
means that the ratio tends to one. Constants below may depend on the
fixed response maps and domains, but not on observation counts.

Let \(I,J\subset\mathbb R\) be compact intervals with nonempty
interiors. Let \(C:I\to\mathbb R^D\) be a fixed \(C^2\) curve, defined
on a neighborhood of \(I\), satisfying three conditions. First,
\(C'(b)\ne0\) everywhere. Second, there are only finitely many image
points with multiple preimages; write their complete fibres as 
\[
B_h=\{\beta_{h,1},\ldots,\beta_{h,M_h}\},\qquad
1\le h\le H_0,\quad M_h\ge2.
\]
 Every remaining fibre is a singleton. Third, for distinct preimages
within each fibre, the vectors \(C'(\beta_{h,i})\) and
\(C'(\beta_{h,l})\) are linearly independent. We call this the pairwise
nonparallel branch-tangent condition: the matrix with columns
\(C'(\beta_{h,i})\) and \(-C'(\beta_{h,l})\) has rank two. This
condition is used in every ambient dimension \(D\); it does not require
the two tangent lines to span the ambient space. Multiple fibres with
more than two points and fibres at interval endpoints are allowed.

Let \(q:I\to\mathbb R\) and \(\chi:J\to\mathbb R\) be fixed \(C^1\)
functions. The parameter and target are 
\[
\theta=(r,b,t)\in J\times I\times[-T_*,T_*],\qquad T(\theta)=t,
\]
 where \(T_*>0\) satisfies 
\begin{equation}
\tfrac12\operatorname{diam}(q(I))\|\chi\|_\infty\le T_*.
\tag{2}
\label{eq:2}
\end{equation}
 The primary source, observed \(n=n_0\) times, has mean 
\begin{equation}
F(r,b,t)=\bigl(r,t-q(b)\chi(r),C(b),b a_0(r)\bigr). \tag{3}
\end{equation}
 Each further source \(j=1,\ldots,s\) has either mean \((r,b a_j(r))\)
or scalar mean \(b a_j(r)\), observed \(n_j\) times. Let \(\mathcal L\)
be the indices of the sources containing the radius coordinate \(r\),
including index zero. For \(j\in\mathcal L\), assume that \(a_j\) has a
fixed nonnegative \(C^2\) extension to a neighborhood of \(J\). For
\(j\notin\mathcal L\), assume a \(C^2\) extension bounded below by a
positive constant. The latter sources need not provide additional radius
measurements. A known zero primary amplitude can be omitted together
with its ancillary noise coordinate.

Define the available radius count and detector information by 
\[
N_L=\sum_{j\in\mathcal L}n_j,\qquad
H_{\boldsymbol n}(r)=\sum_{j=0}^{s}n_j a_j(r)^2.
\]
 For each pair in a multiple fibre, set 
\[
d_{h,il}=\tfrac12|\beta_{h,i}-\beta_{h,l}|,\qquad
w_{h,il}=\tfrac14|q(\beta_{h,i})-q(\beta_{h,l})|^2.
\]
 The resulting target-weighted discrimination profile is 
\begin{equation}
\mathcal S(\boldsymbol n)=
\max_{h,i<l}\sup_{r\in J}
 w_{h,il}\chi(r)^2
 \Phi\{-d_{h,il}\sqrt{H_{\boldsymbol n}(r)}\}. \tag{4}
\label{eq:4}
\end{equation}
 An empty maximum is zero. Every term uses the source and target gaps
of one actual common-parameter pair. The minimax risk is 
\[
\mathcal R(\boldsymbol n)=\inf_{\widehat t}\sup_{(r,b,t)\in J\times I\times[-T_*,T_*]}
\mathbb E_{r,b,t}(\widehat t-t)^2,
\]
 where the infimum is over all measurable estimators based on the
observed blocks.

\begin{theorem}[finite-fibre global target risk]
\label{thm:global-risk}
Under these hypotheses, the minimax squared target
risk satisfies 
\begin{equation}
\mathcal R(\boldsymbol n)\asymp(1+n)^{-1}+\mathcal S(\boldsymbol n) \tag{5}
\label{eq:5}
\end{equation}
 for every nonnegative integer allocation. The comparison need not be
uniform as tangent angles vanish or distinct endpoint target values
merge.
\end{theorem}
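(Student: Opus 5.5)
The proof has two parts: a lower bound by two-point tests, and an upper bound by an explicit three-stage estimator (localize, select a branch, refit the target). Throughout, \(\Phi\{-\tfrac12\sqrt{2\cdot 2D_{\mathrm{KL}}}\}\)-type quantities are controlled through the Hellinger or total-variation affinity of Gaussian shifts. For two Gaussian experiments whose mean vectors (stacked over all counted blocks) differ by Euclidean distance \(\Delta\), the optimal testing error equals \(\Phi(-\Delta/2)\) exactly. This is how the Gaussian-tail scale is kept in both directions.

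\emph{Lower bound.} The floor \((1+n)^{-1}\) follows from a two-point argument in \(t\) at a fixed \((r,b)\). Only the primary source depends on \(t\), so two targets at distance \(\asymp(1+n)^{-1/2}\) inside \([-T_*,T_*]\) are indistinguishable with constant probability. For \(\mathcal S\), fix a pair \(\beta_{h,i},\beta_{h,l}\) and a radius \(r\) near the supremum. Set \(b=\beta_{h,i}\), \(b'=\beta_{h,l}\), and choose \(t,t'\) so that \(t-q(b)\chi(r)=t'-q(b')\chi(r)\). Then \(t-t'=(q(b)-q(b'))\chi(r)\), and the centered choice \(t=\tfrac12(q(b)-q(b'))\chi(r)\) stays in \([-T_*,T_*]\) by (2). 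With these choices the primary means agree in the \(r\), target and \(C(b)=C(b')\) coordinates. They differ only in the amplitude coordinates \(b a_j(r)\), for a total separation \(\Delta^2=(b-b')^2H_{\boldsymbol n}(r)\), hence \(\Delta/2=d_{h,il}\sqrt{H_{\boldsymbol n}(r)}\). Le Cam's two-point bound then gives risk \(\ge\tfrac14|t-t'|^2\Phi(-\Delta/2)=w_{h,il}\chi(r)^2\Phi\{\cdot\}\) up to constants. Taking the maximum over pairs and the supremum over \(r\) gives the term \(\mathcal S\).

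\emph{Upper bound.} The estimator proceeds in three stages.
\begin{enumerate}
\item Estimate \(r\) by averaging all radius coordinates, with error of order \(N_L^{-1/2}\le(1+n)^{-1/2}\).
\item Localize \(b\) from the \(C(b)\) coordinates. The immersion hypothesis and the finite-fibre hypothesis give a quantitative all-pairs inequality \(\|C(b)-C(b')\|\gtrsim\min_{\beta\in\pi^{-1}}|b-\beta|\)-type distance to the fibre equivalence. Near a multiple point, the nonparallel-tangent condition gives \(\|C(\beta_i+u)-C(\beta_l+v)\|\gtrsim|u|+|v|\), and compactness plus the singleton-fibre assumption gives the bound away from these points. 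The least-squares fit of \(C\) therefore returns \(b\) up to \(O(n^{-1/2})\), modulo a choice among branches of the same fibre \(B_h\).
\item Choose the branch by a likelihood-ratio test on the amplitude sources at the plugged-in \(\hat r\).
\end{enumerate}
The target is then refit as \(\hat t=\bar y_{\mathrm{target}}+q(\hat b)\chi(\hat r)\). Branches with equal \(q\)-values are grouped into target-value classes and never distinguished, since a wrong selection inside a class costs nothing. Between classes, a wrong selection costs \(|q(\beta_i)-q(\beta_l)|^2\chi(r)^2\) and occurs with probability about \(\Phi(-d\sqrt{H_{\boldsymbol n}(r)})\), which matches \(\mathcal S\).

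\emph{Main obstacle.} The hardest step is making the upper bound uniform over all allocations and over parameters that lie near, but not exactly at, a multiple fibre. There the localization error in \(b\) and \(\hat r\) interacts with the branch test, and the Gaussian-tail exponent must not be degraded by more than a constant factor. Any fixed-fraction loss in the exponent would break the comparison when \(H_{\boldsymbol n}\) is large. My first approach would be a case split on the size of \(d\sqrt{H_{\boldsymbol n}(r)}\) relative to \(\log(1+n)\). When the tail is below \((1+n)^{-1}\), crude error bounds suffice because the floor term dominates. Otherwise \(H_{\boldsymbol n}\) is at most logarithmic in \(n\), and the plug-in perturbations in \(\hat r,\hat b\) of size \(n^{-1/2}\) shift \(d\sqrt{H}\) only by \(o(1)\). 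The exponential factor then changes by at most a constant, using the \(C^2\) bounds on \(a_j\) and \(\chi\) to control \(\sup_r\) against the value at the true \(r\). Small \(|\chi(r)|\), where the two targets nearly coincide, is handled because the loss weight \(\chi(r)^2\) enters the bound multiplicatively.
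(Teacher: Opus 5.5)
Your lower bound is the paper's (the centred pair $(r,\beta_{h,i},[q(\beta_{h,i})-\bar q]\chi(r))$, $(r,\beta_{h,l},[q(\beta_{h,l})-\bar q]\chi(r))$, plus the $t$-only floor), and your estimator follows the paper's three-stage design. The gap is the step you yourself flag as the main obstacle. You assert that when $H\le K\log n$, plugging $\hat r$ into the amplitudes shifts $d\sqrt H$ by only $o(1)$, because the perturbations are of size $n^{-1/2}$. That does not follow.

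The branch test uses the direction $\hat v=(\sqrt{n_j}a_j(\hat r))_{n_j>0}$. Because allocations are unrestricted, a localizing source may have $n_j\gg n$ while $H$ is small only because $a_j(r)$ is tiny. The radius error $\asymp\sqrt{\log n/N_L}$ is then multiplied by $\sqrt{n_j}\le\sqrt{N_L}$. So $C^2$ (Lipschitz) control alone gives only $\|\hat v-v\|^2\lesssim\log n$, which is the same order as $H$ in the consequential range.

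This loss is real. Suppose a localizing amplitude had a transversal zero, $a_1(r)=r-r_0$, with $n_1\gg n$. Let a constant-amplitude scalar source supply $H(r_0)=h^2$, and take a symmetric pair with $d>1$. Then $\hat v_1\approx Z\sim N(0,1)$, and the conditional wrong-branch probability is $\Phi(-dh^2/\sqrt{Z^2+h^2})$. Averaging over $Z$ gives $e^{-(2d-1)h^2/2+o(h^2)}$ instead of $e^{-d^2h^2/2}$. For suitable $h^2\asymp\log n$, this exceeds $(1+n)^{-1}+\mathcal S$ by a polynomial factor. Your argument never uses the hypotheses $a_j\ge0$ for $j\in\mathcal L$ and $a_j\ge c_j>0$ for $j\notin\mathcal L$, and these are exactly what rule this out.

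The paper closes the step in three moves.
\begin{enumerate}
\item Nonnegativity of a $C^2$ extension gives Glaeser's bound $|a_j'|^2\le Ca_j$. Hence $|a_j(\hat r)-a_j(r)|\le C\sqrt{a_j(r)}\,|\hat r-r|+C|\hat r-r|^2$.
\item Combined with $n_ja_j(r)^2\le H$, $n_j\le N_L$, and relative-error control for the positive nonlocalizing amplitudes, this yields (8). So $E=\|\hat v-v\|^2=o(1)$ uniformly when $H\le K\log n$.
\item The projection inequality $(\hat v\cdot v)^2/\|\hat v\|^2\ge H-E$ (the projection is nonnegative because all components are) gives $\mu\ge d\sqrt H-C\delta_n\sqrt H-C\sqrt E$. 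This also absorbs the nonzero midpoint of an asymmetric pair. The Mills comparison then turns the $o(1)$ change in the squared argument into a bounded factor.
\end{enumerate}

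There are also smaller corrections.
\begin{itemize}
\item Crude bounds suffice only for $H>K\log n$ with $K$ large. A tail merely below $1/n$ is not enough, since crude bounds lose a constant fraction of the exponent.
\item A wrong choice inside a target class is free only if $\hat b$ is refit over the union of all charts of that class. Then the true $b$ is feasible, and the all-pairs lemma bounds $|q(\hat b)-q(b)|$ by the actual curve noise rather than the screening radius $\sqrt{\log n/n}$.
\item No comparison of $\sup_r$ with the true $r$ is needed, because the bound at the true $r$ is automatically dominated by $\mathcal S$.
\end{itemize}
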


The hypotheses are properties of the response maps. They do not assume
the existence of an estimator with the desired risk. The primary map is
automatically everywhere immersive: its radius and shifted-target
coordinates isolate two derivative directions, while \(C'\ne0\) supplies
the third.
Nevertheless,
its global risk can be controlled by the second term of \eqref{eq:5}.

The theorem makes the purpose of each source explicit. The core supplies
the shifted target coordinate and the curve fit, so unlimited auxiliary
replication cannot remove its independent variance floor. The amplitude
coordinates decide between distinct curve branches. Their statistical
role depends on the corresponding target difference: if all endpoint
values of \(q\) agree within each multiple fibre, then \(\mathcal S=0\)
and the regular order holds despite nuisance nonidentification. If some
endpoint target values differ, the same nuisance ambiguity can remain
consequential until sufficient amplitude information is acquired. The
radius-dependent factor \(\chi\) determines how much loss a branch error
incurs.

More precisely, under the stated box condition, the primary source
identifies the target exactly when 
\[
\chi(r)\{q(\beta_{h,i})-q(\beta_{h,l})\}=0
\quad\text{for every listed pair whenever }a_0(r)=0.
\]
 This follows by equating its observed coordinates; the lower-bound
alternatives below establish necessity. Exact identification is
therefore a zero-set condition, whereas \eqref{eq:4} retains quantitative
behavior around that zero set. The distinction is useful when choosing a
supplementary source: a noiseless identifying coordinate may still
receive too few repetitions to suppress the target-weighted ambiguity at
the desired risk scale.

\subsection{Proof mechanism}

The complete proof and measurable conventions are in Appendix \ref{app:s1}. We
give the main geometric and statistical steps because each is necessary
for the global statement.

The curve assumptions imply a quantitative all-pairs alternative. There
are fixed \(L,\varepsilon_0>0\) such that
\(\|C(b)-C(c)\|\le\varepsilon\le\varepsilon_0\) implies either 
\begin{equation}
|b-c|\le L\varepsilon, \tag{6}
\label{eq:6}
\end{equation}
 or proximity, at the same scale, to two preimages of one listed
fibre: 
\begin{equation}
|b-\beta_{h,i}|+|c-\beta_{h,l}|\le L\varepsilon. \tag{7}
\label{eq:7}
\end{equation}
 Near the diagonal, \eqref{eq:6} follows from the nonzero derivative and
uniform Taylor control. At a distinct equality pair, the derivative of
\((b,c)\mapsto C(b)-C(c)\) has rank two. Its least singular value and a
quadratic Taylor remainder prove \eqref{eq:7}. Outside these finitely many
neighborhoods and the diagonal, compactness gives a positive image
separation. Thus local differential rank is supplemented by an explicit
restriction on interactions between distinct branches.

For the ambiguity lower bound, fix \(r,h,i,l\) and write
\(\bar q=[q(\beta_{h,i})+q(\beta_{h,l})]/2\). The alternatives 
\[
(r,\beta_{h,i},[q(\beta_{h,i})-\bar q]\chi(r)),\quad
(r,\beta_{h,l},[q(\beta_{h,l})-\bar q]\chi(r))
\]
 belong to the full parameter box by \eqref{eq:2}. Their shifted target and
curve coordinates agree, while their squared complete Gaussian mean
distance is \(4d_{h,il}^2H_{\boldsymbol n}(r)\). Their exact equal-prior
testing error is the normal tail in \eqref{eq:4}. Turning an estimator into a
nearest-target test gives the corresponding lower bound. Separately,
varying only \(t\) by order \((1+n)^{-1/2}\) leaves every auxiliary
source invariant and gives the independent core floor.

For the upper bound, pool the available radius coordinates and clip
their mean to obtain \(\hat r\), with mean squared error at most
\(N_L^{-1}\). Fit the curve to the primary curve-coordinate mean. If the
fit is away from every multiple preimage, \eqref{eq:6} bounds its parameter error
by the actual Gaussian residual. Otherwise \eqref{eq:7} localizes the true
nuisance near one finite fibre. Conditional on the independent radius
observation, collect the whitened amplitude means into 
\[
Y\sim N(bv,I),\qquad v=(\sqrt{n_j}a_j(r))_{n_j>0},
\]
 and let \(\hat v\) be obtained by evaluating the amplitudes at
\(\hat r\). Select among the finite candidate means
\(\beta_{h,l}\hat v\) by Euclidean distance.

A selected nuisance branch need not have a different endpoint target
value. Therefore the final curve refit is performed over the union of
all nearby branches sharing the selected value of \(q(\beta_{h,l})\). If
the target class is correct, \eqref{eq:6}--\eqref{eq:7} bound its target error by the
actual curve noise. This avoids assigning the larger screening error
\(O(\sqrt{\log n/n})\) to the final estimate. If the class is incorrect,
its loss is bounded by the corresponding fixed endpoint target gap,
multiplied by \(\chi(r)^2\).

The remaining issue is precision of the noisy score direction. On a
radius event of probability \(1-O(n^{-8})\), nonnegative-extension
derivative control gives 
\begin{equation}
E:=\|\hat v-v\|^2
\le C\left\{
\frac{\log n}{\sqrt{N_L}}\sqrt H+
\frac{\log^2 n}{N_L}+\frac{H\log n}{N_L}\right\}, \tag{8}
\label{eq:8}
\end{equation}
 where \(H=\|v\|^2\). For a competitor pair with half-separation
\(d>0\), its Gaussian bisector has standardized signed distance at least

\begin{equation}
\mu\ge d\sqrt H-C\sqrt{\log n/n}\sqrt H-C\sqrt E. \tag{9}
\label{eq:9}
\end{equation}
 The last term also accounts for a nonzero midpoint of an asymmetric
fibre pair. The projection inequality 
\[
\frac{(\hat v\cdot v)^2}{\|\hat v\|^2}\ge H-E
\]
 is responsible for preserving the discrimination scale.

When \(H\le K\log n\), \eqref{eq:8}--\eqref{eq:9} change the squared tail argument by at
most \(o(1)\), uniformly over all other counts. The Mills comparison 
\begin{equation}
\Phi(-\sqrt x)\asymp(1+x)^{-1/2}e^{-x/2},\qquad x\ge0,
\tag{10}
\end{equation}
 then preserves its bounded-factor Gaussian tail. For \(H>K\log n\), a
sufficiently large fixed \(K\) makes errors negligible compared with
\(n^{-1}\). A finite union over competing pairs completes the upper
bound. Zero-information scores, bounded primary counts, and endpoint
charts are treated explicitly in Appendix \ref{app:s1}.

\section{Identical local data with different global rates}
\label{sec:main-4}

Consider the full box 
\[
K=[-a,a]\times[-2,2]\times[-2,2],\qquad 0<a<\tfrac12,
\]
 and the means 
\begin{equation}
\begin{split}
F(r,b,t)&=(r,t-b\chi(r),b^2,b^3-b,b\psi(r)),\\
G(r,b,t)&=(r,b(1-r)).
\end{split} \tag{11}
\label{eq:11}
\end{equation}
 The fixed functions are even and \(C^2\) on a neighborhood of the
radius interval. Assume \(\psi\ge0\) there, \(\psi(0)=\psi'(0)=0\),
\(\psi(r)>0\) for \(r\ne0\), \(0\le\chi\le1\), and \(\chi(0)=0\). There
are \(n\) primary and \(m\) resolver blocks, giving \(5n+2m\) scalar
measurements.

\begin{corollary}[full-box target-risk law]
\label{thm:full-box-risk}
The model \eqref{eq:11}
satisfies 
\begin{equation}
\mathcal R_{\psi,\chi}(n,m)\asymp
(1+n)^{-1}+\sup_{0\le r\le a}\chi(r)^2
\Phi\{-\sqrt{n\psi(r)^2+m(1-r)^2}\}. \tag{12}
\label{eq:12}
\end{equation}
 The core is everywhere immersive and globally identifies \(t\). Its
only nontrivial exact fibres are 
\begin{equation}
\{(0,1,t),(0,-1,t)\},\qquad -2\le t\le2. \tag{13}
\end{equation}

\end{corollary}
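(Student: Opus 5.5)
The plan is to derive the corollary directly from Theorem~\ref{thm:global-risk} by casting \eqref{eq:11} in the form \eqref{eq:3} and then computing the profile \eqref{eq:4}. I take \(J=[-a,a]\), \(I=[-2,2]\), \(q(b)=b\), and the curve \(C(b)=(b^2,b^3-b)\in\mathbb R^2\). The primary amplitude is \(a_0=\psi\). There is one further source, \(j=1\), with mean \((r,b\,a_1(r))\) and \(a_1(r)=1-r\). Both sources contain \(r\), so \(\mathcal L=\{0,1\}\). The assumptions give \(\psi\ge0\) with a \(C^2\) extension, and \(1-r>0\) on a neighborhood of \(J\) since \(a<1/2\), so the nonnegativity hypotheses hold. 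For the box condition \eqref{eq:2}: \(\operatorname{diam}q(I)=4\) and \(\|\chi\|_\infty\le1\), so \(\tfrac12\cdot4\cdot1=2=T_*\).

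Next I verify the three curve conditions.
\begin{itemize}
\item \textbf{Immersion.} \(C'(b)=(2b,3b^2-1)\) never vanishes, since \(b=0\) gives second component \(-1\).
\item \textbf{Fibres.} \(C(b)=C(c)\) with \(b\ne c\) forces \(c=-b\). Then \(b^3-b=-(b^3-b)\) gives \(b(b^2-1)=0\), so \(b=\pm1\). The only multiple fibre is therefore \(B_1=\{-1,1\}\).
\item \textbf{Nonparallel tangents.} \(C'(1)=(2,2)\) and \(C'(-1)=(-2,2)\) are independent.
\end{itemize}

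For the single pair, \(d=\tfrac12|1-(-1)|=1\) and \(w=\tfrac14|q(1)-q(-1)|^2=1\). The detector information is \(H_{\boldsymbol n}(r)=n\psi(r)^2+m(1-r)^2\). Substituting into \eqref{eq:4} gives \(\sup_{|r|\le a}\chi(r)^2\Phi\{-\sqrt{H_{\boldsymbol n}(r)}\}\). Since \(\chi\) and \(\psi\) are even, restricting to \(0\le r\le a\) only requires care with the factor \((1-r)^2\), which is not even. For \(r\ge0\), \((1-r)^2\le(1+r)^2\), so the value at \(r\) is at least the value at \(-r\). The supremum over \([-a,a]\) is therefore attained on \([0,a]\). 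Theorem~\ref{thm:global-risk} then yields \eqref{eq:12}.

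It remains to prove the structural claims, which are direct computations.
\begin{itemize}
\item \textbf{Immersion of the core.} The Jacobian of \(F\) has the \(r\)-row \((1,0,0)\) and the coordinate \(t-b\chi(r)\) with \(\partial_t=1\). The curve coordinates have \(\partial_b\) equal to \(C'(b)\ne0\), with no \(r\) or \(t\) dependence. Eliminating the first two columns shows rank three everywhere.
\item \textbf{Fibres of the core.} Suppose \(F(r,b,t)=F(r',b',t')\). Then \(r=r'\) and \(C(b)=C(b')\), so either \(b=b'\) or \(\{b,b'\}=\{1,-1\}\). In the second case \(b\psi(r)=b'\psi(r)\) forces \(\psi(r)=0\), hence \(r=0\). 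Then \(\chi(0)=0\) gives \(t=t'\).
\end{itemize}
The only nontrivial fibres are thus exactly \eqref{eq:13}, and \(t\) is globally identified.

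The main obstacle is the careful matching of the data to the hypotheses of Theorem~\ref{thm:global-risk}. That means treating endpoint charts in \(J\) correctly, and confirming that the even extensions and the positivity of \(a_1\) suffice. The degeneracy \(\psi(0)=\psi'(0)=0\) is harmless, because \(a_0\) is only required to be nonnegative and \(C^2\). Beyond that, the argument is bookkeeping.
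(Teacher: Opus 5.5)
Your proposal is correct and follows the paper's own argument. You specialize Theorem~\ref{thm:global-risk} with \(C(b)=(b^2,b^3-b)\), \(q(b)=b\), \(a_0=\psi\) and \(a_1(r)=1-r\), and read off \(d=w=1\). You then reduce the supremum to \([0,a]\) using evenness and \((1-r)^2\le(1+r)^2\), and check rank and fibres directly. The paper's appendix also gives a self-contained proof via an explicit identity for \(\|C(b)-C(b')\|^2\), but you do not need it.

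Two small fixes. To justify ``exactly'' in the fibre claim, add the one-line converse \(F(0,1,t)=F(0,-1,t)\), which holds because \(\psi(0)=\chi(0)=0\). In the rank check, the nonsingular \(3\times3\) block comes from rows (output coordinates \(r\), \(t-b\chi(r)\), and a nonvanishing curve coordinate), not from eliminating columns; the conclusion is unaffected.
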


Indeed, \(C(b)=(b^2,b^3-b)\) has the sole multiple fibre \(\{-1,1\}\),
whose tangent vectors \((-2,2)\) and \((2,2)\) are independent. Theorem~\ref{thm:global-risk} applies with half-separation and squared target half-gap both equal to
one. Evenness and \((1-r)^2\le(1+r)^2\) for \(r\ge0\) give the exact
reduction of the supremum to nonnegative radii. The full derivative rank
follows because \((2b,3b^2-1)\) never vanishes. Equality of core
observations first fixes \(r\), then forces either identical nuisances
or the pair \(b=\pm1,r=0\); the targets agree because \(\chi(0)=0\). For
these unit-covariance real Gaussian blocks, the full one-block Fisher
matrix is \(I_F(\theta)=DF(\theta)^TDF(\theta)\). Full derivative rank
and compactness make its least eigenvalue strictly positive for each
fixed member of the model family. The eigenvalue bound is for fixed
response maps, without a uniform claim over varying function families.
This statement concerns all local parameter coordinates; it does not
assert a global Lipschitz inverse for the target.

\subsection{Smooth and Gevrey separation}

For \(r\ne0\), define 
\begin{equation}
\psi_*(r)=\exp\{-\tfrac12\exp(1/r^2)\},\qquad
\chi_\alpha(r)=\psi_*(r)^\alpha,\quad 0<\alpha<1,
\tag{14}
\label{eq:14}
\end{equation}
 and give both functions value zero at the origin.

\begin{theorem}[critical-jet insufficiency and a coupled window]
\label{thm:critical-jets}
Throughout the family \eqref{eq:14}, the
target, box, complete labelled exact fibre relations, and
derivative-rank profile agree. All derivatives of every order of both
source maps agree at every point of \(\{r=0\}\). The one-block core
Fisher matrix is positive definite everywhere, with a positive minimum
eigenvalue for each fixed model; uniformity over \(\alpha\) is not
asserted.
Nevertheless, 
\begin{equation}
\mathcal R_\alpha(n,0)\asymp n^{-\alpha}. \tag{15}
\label{eq:15}
\end{equation}
 Let \(\ell=\log n\) and \(r_\ell=(\log\ell)^{-1/2}\), once
\(r_\ell<a\). Uniformly over \(0\le m\le C_0\ell\), for every fixed
\(C_0\), 
\begin{equation}
\mathcal R_\alpha(n,m)\asymp n^{-1}
+n^{-\alpha}(1+m)^{-1/2}
\exp\{-\tfrac12m(1-r_\ell)^2\}. \tag{16}
\label{eq:16}
\end{equation}
 Consequently the boundary for regular \(n^{-1}\) risk is 
\begin{equation}
m=\frac{2(1-\alpha)\ell-\log\ell}{(1-r_\ell)^2}+O(1). \tag{17}
\label{eq:17}
\end{equation}
 The same exponent separation \eqref{eq:15} is possible inside every prescribed
Gevrey class of order strictly greater than one.
\end{theorem}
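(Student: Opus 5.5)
The plan is to read everything off Corollary~\ref{thm:full-box-risk} once the structural claims are checked for the family \eqref{eq:14}.

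\textbf{Structural claims.} Both \(\psi_*\) and \(\chi_\alpha\) are even and \(C^\infty\). All their derivatives vanish at \(r=0\): each derivative is a polynomial in \(1/r\), \(e^{1/r^2}\) and \(\psi_*\), and \(\psi_*\) decays doubly exponentially. In particular \(\psi_*\ge0\), \(\psi_*(0)=\psi_*'(0)=0\), \(\psi_*>0\) off zero, and \(0\le\chi_\alpha\le1\) with \(\chi_\alpha(0)=0\). Hence the hypotheses of Corollary~\ref{thm:full-box-risk} hold for every \(\alpha\).

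The maps in \eqref{eq:11} differ across \(\alpha\) only through \(b\chi_\alpha(r)\), so all jets along \(\{r=0\}\) agree. The fibres \eqref{eq:13}, immersivity and the Fisher statement come directly from the corollary.

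\textbf{The exponent \eqref{eq:15}.} Take \(m=0\) in \eqref{eq:12}. We need
\[
\sup_r \psi_*^{2\alpha}\Phi(-\sqrt n\,\psi_*).
\]
Substitute \(u=\psi_*(r)\), which runs over \([0,\psi_*(a)]\) continuously. The quantity becomes \(\sup_u u^{2\alpha}\Phi(-\sqrt n\,u)\). Setting \(u=x/\sqrt n\) gives \(n^{-\alpha}\sup_x x^{2\alpha}\Phi(-x)\). This supremum is a positive finite constant, attained at a fixed \(x\), and that \(x\) lies in range once \(n\) is large. So \eqref{eq:15} follows, since \(n^{-1}\le n^{-\alpha}\).

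\textbf{The window \eqref{eq:16}.} This is the main obstacle. We must compute
\[
\sup_r \psi_*^{2\alpha}\Phi\bigl(-\sqrt{n\psi_*^2+m(1-r)^2}\bigr)
\]
uniformly for \(m\le C_0\ell\).

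\emph{Lower bound.} Choose \(r\) with \(n\psi_*(r)^2=1\), that is \(e^{1/r^2}=\ell\), so \(r=r_\ell\) exactly. Then \(\psi_*^{2\alpha}=n^{-\alpha}\). Apply the Mills comparison \((10)\) with \(x=1+m(1-r_\ell)^2\); this gives the claimed term.

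\emph{Upper bound.} Split the range of \(r\).
\begin{itemize}
\item For \(r\ge r_\ell\): here \(n\psi_*^2\) is at least a multiple \(n\psi_*(r_\ell)^2\cdot(\text{growth})\). Writing \(\psi_*^2=n^{-e^{1/r^2}/\ell}\), the product \(\psi_*^{2\alpha}e^{-n\psi_*^2/2}\) is dominated by \(n^{-\alpha}\) times a decaying factor. The \(m(1-r)^2\) term is at most \(m(1-r_\ell)^2+\) a loss in the exponent. That loss must be weighed against the gain \(n\psi_*^2\), which becomes enormous once \(r-r_\ell\gg r_\ell^3/\ell\). This needs care, but \(m\le C_0\ell\) makes \(m(r-r_\ell)=O(1)\) on the relevant scale \(r-r_\ell\lesssim r_\ell^3\).
\item For \(r\le r_\ell\): here \(\psi_*^{2\alpha}\le n^{-\alpha}\). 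We have \((1-r)^2\ge(1-r_\ell)^2\), and \(n\psi_*^2\ge0\).
\end{itemize}
The Mills factor \((1+m)^{-1/2}\) is preserved in both ranges. One still needs the \(r\ge r_\ell\) side to keep the \(m\)-exponent uniformly within \(O(1)\). If this direct split fails, I would reparametrize by \(s=e^{1/r^2}/\ell\) and expand \(r\) around \(r_\ell\) in \(\log s\).

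\textbf{Threshold \eqref{eq:17}.} Set the second term of \eqref{eq:16} equal to order \(n^{-1}\):
\[
(1-\alpha)\ell=\tfrac12 m(1-r_\ell)^2+\tfrac12\log m+O(1).
\]
Solve with \(\log m=\log\ell+O(1)\); this requires \(m\le C_0\ell\) with \(C_0>2(1-\alpha)\).

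\textbf{Gevrey version.} Replace the inner \(\exp(1/r^2)\) by \(\exp(1/|r|^{\kappa})\) with small \(\kappa\). Use \(\psi=\exp\{-\tfrac12 e^{1/|r|^\kappa}\}\) and \(\chi_\alpha=\psi^\alpha\). The aim is membership in Gevrey class \(1+\delta\) for any \(\delta>0\), which is plausible for \(\kappa\) small. A cleaner alternative, if that fails, is the single exponential \(\psi=\exp(-|r|^{-\kappa})\) with \(\kappa=1/\delta\). The computation giving \eqref{eq:15} used only the substitution \(u=\psi\) and \(\chi=\psi^\alpha\), so it carries over verbatim.
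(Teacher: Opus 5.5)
Your structural checks, the proof of \eqref{eq:15} via \(x=\sqrt n\,\psi_*(r)\), the lower half of \eqref{eq:16}, the bound for \(r\le r_\ell\), and the inversion giving \eqref{eq:17} are all sound. For the lower half, note that \(r=r_\ell\) is exactly where \(n\psi_*^2=1\). For \(r\le r_\ell\) you use monotonicity of \(\psi_*\), which is slightly cleaner than the paper's approximation argument. Otherwise you follow the paper's route through Corollary~\ref{thm:full-box-risk} and the variable \(q=n\psi_*(r)^2\).

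\textbf{Gap in the upper bound for \(r\ge r_\ell\).} You flag this step but do not prove it, and the justification you give is false as stated. On your scale \(r-r_\ell\lesssim r_\ell^3=(\log\ell)^{-3/2}\), the resolver loss satisfies \(m[(1-r_\ell)^2-(1-r)^2]\ge m(r-r_\ell)\), and this can be of order \(C_0\ell(\log\ell)^{-3/2}\to\infty\), not \(O(1)\). What is needed is a comparison of this loss with the core gain \(q\), uniform in \(m\le C_0\ell\). The paper obtains it by splitting in \(q\ge1\), writing \(r=(\log(\ell-\log q))^{-1/2}\).

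For \(1\le q\le\ell^2\), apply the mean value theorem to \(u\mapsto(\log u)^{-1/2}\) on \([\ell-2\log\ell,\ell]\). This gives \(0\le r-r_\ell=O(1/(\ell\sqrt{\log\ell}))\), so the loss is \(O(C_0/\sqrt{\log\ell})=o(1)\). The inequality \(\Phi(-\sqrt{h+x})\le e^{-x/2}\Phi(-\sqrt h)\) follows from \(\phi(t)/\Phi(-t)\ge t\). It then bounds the term by \(Cn^{-\alpha}q^\alpha e^{-q/2}\Phi\{-\sqrt m(1-r_\ell)\}\le C'n^{-\alpha}\Phi\{-\sqrt m(1-r_\ell)\}\).

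For \(q>\ell^2\), drop the resolver entirely. The term is at most \(n^{-\alpha}q^\alpha e^{-q/2}\le n^{-\alpha}\ell^{2\alpha}e^{-\ell^2/2}\). This is \(o\bigl(n^{-\alpha}(1+C_0\ell)^{-1/2}e^{-C_0\ell/2}\bigr)\) because \(\ell^2\gg C_0\ell\ge m\).

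Your fallback reparametrization in \(\log s\) is this same idea. It must actually be carried out, since it is exactly where the uniformity over \(0\le m\le C_0\ell\) is earned.

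\textbf{Missing Gevrey estimate.} For the Gevrey claim you only name candidate functions. The content of that claim is a derivative bound, and you do not supply it. The paper uses your second candidate, \(\psi=e^{-|r|^{-p}}\) and \(\chi=e^{-\alpha|r|^{-p}}\) with \(p\ge1/(s-1)\). It proves \(|\partial_r^k e^{-c|r|^{-p}}|\le A_c^k(k!)^{1+1/p}\) in three steps:
\begin{itemize}
\item a Cauchy estimate on the disk \(|z-x|\le\varepsilon x\), where \(\operatorname{Re}z^{-p}\ge x^{-p}/2\);
\item maximizing \(x^{-k}e^{-(c/2)x^{-p}}\) over \(x\);
\item the inequality \((k/e)^k\le k!\).
\end{itemize}
Flatness at zero and the bounds on mixed derivatives of \(b\psi(r)\) and \(b\chi(r)\) then follow.

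Your first candidate would also work, for any \(\kappa\), including the original \(\kappa=2\). The same argument on a disk of radius proportional to \(x^{3}\) gives \(|\psi_*^{(k)}(x)|\le k!\,C^k(\log 6k)^{3k/2}\), and similarly for \(\chi_\alpha\). So even the family \eqref{eq:14} lies in every \(G^s\) with \(s>1\). In either case the estimate has to be proved rather than called plausible.
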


A formal jet at a point is the collection of all derivatives there.
Flatness of \eqref{eq:14} follows because every differentiated term is a
polynomial in \(r^{-1}\) and \(e^{1/r^2}\), multiplied by the dominating
double exponential. Changing \(\alpha\) changes none of these limiting
derivatives. This agreement holds along the critical
hypersurface,
not at every point of the domain.

For the rate calculation, put \(z=n\psi_*(r)^2\) in \eqref{eq:12}. At \(m=0\),
its ambiguity term becomes 
\[
n^{-\alpha}\sup_{0\le z\le n\psi_*(a)^2}z^\alpha\Phi(-\sqrt z),
\]
 whose supremum tends to a finite positive number. The regular floor
is smaller because \(\alpha<1\). With \(m=O(\ell)\), the consequential
values of \(z\) correspond to \(r=(\log(\ell-\log z))^{-1/2}\).
Uniformly on a sufficiently broad central range, replacing this radius
by \(r_\ell\) changes the resolver information by \(o(1)\); the
remaining tails are negligible. This proves \eqref{eq:16}. More precisely, when
also \(m\to\infty\), the optimized ambiguity profile divided by
\(n^{-\alpha}\Phi[-\sqrt m(1-r_\ell)]\) tends to \((2\alpha/e)^\alpha\).
This is an exact profile constant, not an exact composite minimax
constant. Appendix \ref{app:s2} gives the complete uniform argument.

A Gevrey class of order \(s>1\) consists locally of smooth functions
whose derivatives obey bounds \(CA^k(k!)^s\). Choose \(p\ge1/(s-1)\) and
use \(\psi(r)=e^{-|r|^{-p}}\), \(\chi(r)=e^{-\alpha|r|^{-p}}\), with
zero values at the origin. A Cauchy estimate on a complex disk of radius
proportional to \(|r|\) proves 
\[
\sup_r|\partial_r^k e^{-c|r|^{-p}}|
\le A_c^k(k!)^{1+1/p}.
\]
 The same change of variable proves \eqref{eq:15}. In a quasianalytic class,
equal Taylor germs coincide by definition, excluding this particular
construction. No converse classification of global risk in all
quasianalytic models follows.

\subsection{Why target and source corrections interact}

The factor \((1-r_\ell)^{-2}\) in \eqref{eq:17} converts required discrimination
into resolver count after target coalescence has reduced the requirement
to \((1-\alpha)\ell\). Applying it to the fixed-target requirement and
then subtracting \(2\alpha\ell\) raw observations oversamples by 
\begin{equation}
2\alpha\ell\{(1-r_\ell)^{-2}-1\}\sim4\alpha\ell r_\ell\to\infty. \tag{18}
\end{equation}
 The discrepancy exceeds a bounded acquisition window. A slower target
gap makes the same point below the leading power scale: with
\(\chi(r)=e^{-1/r^2}\), the core risk is \(\asymp(\log n)^{-2}\), and
the window numerator is \(2\ell-5\log\ell\). Subtracting the target
saving as raw counts then misses a term asymptotic to
\(8\sqrt{\log\ell}\). These statements concern fixed functions at
asymptotically large counts; the radius-in-domain condition can require
extremely large budgets.

\section{A polynomial boundary of finite-fibre transfer}
\label{sec:main-5}

The nonparallel-pair assumption in Theorem~\ref{thm:global-risk} cannot be replaced by
nonzero curve derivative and finite fibres alone.

\begin{theorem}[tangential ambiguity with a target-constant fibre]
\label{thm:tangency-boundary}
Fix an integer \(p\ge2\), \((b,t)\in[-2,2]^2\), and observe independent
unit-covariance Gaussian blocks with mean 
\[
F_p(b,t)=\bigl(t-q_p(b),C_p(b)\bigr),
\]
 
\begin{equation}
C_p(b)=(b^2-1,b(b^2-1)^p),\qquad q_p(b)=b(b^2-1).
\tag{19}
\end{equation}
 The map is everywhere immersive and globally identifies \(t\). Its
only distinct nuisance aliases are \(b=\pm1\), where \(q_p\) agrees.
Nevertheless, 
\begin{equation}
\mathcal R_p(n)\asymp(1+n)^{-1/p}. \tag{20}
\label{eq:20}
\end{equation}

\end{theorem}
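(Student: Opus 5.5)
The plan is to prove the upper and lower bounds in \eqref{eq:20} separately. Both rest on a single quantitative fact about the tangential alias: near the pair $b=\pm1$ the curve separation vanishes like the $p$-th power of the distance to the fibre, while the target discrepancy vanishes only linearly. The qualitative claims are direct. $C_p'(b)=(2b,\ldots)$ vanishes only at $b=0$, where the second component has derivative $(-1)^p\ne0$, so the map is immersive. Equality $C_p(b)=C_p(c)$ forces $b^2=c^2$. If $b=-c\ne0$, the second coordinates force $(b^2-1)^p=0$, so $b=\pm1$. There $q_p(\pm1)=0$, so the first coordinate returns the same $t$.

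\textbf{Geometry.} I will parametrize each branch near $\pm1$ by $x=b^2-1$. On the branch through $b=1$, $C_p=(x,\sqrt{1+x}\,x^p)$ and $q_p=\sqrt{1+x}\,x$. On the branch through $b=-1$, every $b$-odd quantity flips sign: $C_p=(y,-\sqrt{1+y}\,y^p)$ and $q_p=-\sqrt{1+y}\,y$. For $b$ near $1$ and $c$ near $-1$ this gives
\[
|q_p(b)-q_p(c)|\asymp|x+y|+O(|x-y|),\qquad \|C_p(b)-C_p(c)\|\ge\max\{|x-y|,\;|x^pg(x)+y^pg(y)|\},
\]
with $g=\sqrt{1+\cdot}$. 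If $|x-y|\le\tfrac12|x|$, then $x$ and $y$ have the same sign, so $x^pg(x)+y^pg(y)\gtrsim|x|^p$ for every parity of $p$. Otherwise $|x|\lesssim|x-y|$. In either case the target gap is bounded by $|x|+|y|\lesssim\|C_p(b)-C_p(c)\|^{1/p}$.

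Away from this neighborhood and its mirror image, the curve behaves as in the all-pairs alternative \eqref{eq:6}. Near the diagonal the nonzero derivative gives $|b-c|\le L\|C_p(b)-C_p(c)\|$, and elsewhere compactness gives positive image separation. Combining the cases yields the global H\"older inequality
\[
|q_p(b)-q_p(c)|\le L\,\|C_p(b)-C_p(c)\|^{1/p},\qquad b,c\in[-2,2],
\]
valid because $\|C_p(b)-C_p(c)\|\le$ a fixed bound on the compact box. This case analysis near $(1,-1)$, which must be uniform in sign and parity, is the step I expect to be the main obstacle. I would handle it as described, by comparing $|x-y|$ with $|x|$.

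\textbf{Upper bound.} With $n$ blocks, average the curve coordinates to get $\bar Z$ and let $\hat b$ minimize $\|\bar Z-C_p(b)\|$ over $[-2,2]$. Then $\|C_p(\hat b)-C_p(b)\|\le2\varepsilon$, where $\varepsilon$ is the norm of the averaged noise. Set $\hat t=\bar Y_1+q_p(\hat b)$, clipped to $[-2,2]$, where $\bar Y_1$ averages the shifted-target coordinate. The H\"older inequality gives
\[
(\hat t-t)^2\lesssim \bar\xi^2+\min\{1,\varepsilon^{2/p}\},
\]
where $\bar\xi$ is the averaged noise of the shifted-target coordinate. Since $\mathbb E\varepsilon^{2/p}\lesssim n^{-1/p}$ and $p\ge2$, the risk is $\lesssim(1+n)^{-1/p}$. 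The case $n=0$ is covered by boundedness of the box.

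\textbf{Lower bound.} For small $u>0$, take $b_1=1+u$ and choose $b_2$ near $-1$ with $b_2^2-1=b_1^2-1=:x\asymp u$. Pick targets with $t_i-q_p(b_i)$ equal, centered so both lie in $[-2,2]$. The means then differ only in the second curve coordinate, by $2\sqrt{1+x}\,x^p\asymp u^p$. The target gap is $|q_p(b_1)-q_p(b_2)|=2\sqrt{1+x}\,x\asymp u$.

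Choose $u\asymp n^{-1/(2p)}$, so that the Kullback--Leibler divergence \eqref{eq:1} is $O(1)$. Le Cam's two-point bound then gives risk $\gtrsim u^2\asymp n^{-1/p}$, and a fixed pair handles bounded $n$. Together with the upper bound this proves \eqref{eq:20}.
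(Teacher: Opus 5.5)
Your proposal is correct and follows essentially the paper's proof. The lower bound uses the same alias pair $b_\pm=\pm\sqrt{1+x}$, $t_\pm=\pm x\sqrt{1+x}$ with $x\asymp n^{-1/(2p)}$. The upper bound uses the same minimum-distance fit plus a global $1/p$-H\"older target modulus, built from the diagonal Lipschitz bound, compactness, and a local analysis in $x=b^2-1$, $y=c^2-1$ near $(1,-1)$. Your split there ($|x-y|\le|x|/2$ versus not) is an equivalent variant of the paper's split on the sign of $xy$. The one detail to add is a measurable selection for your exact minimizer $\hat b$; the paper instead uses a first-index $1/n$-approximate minimizer over a dense sequence.
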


The curve derivative has first coordinate \(2b\); at \(b=0\), its second
coordinate is \((-1)^p\ne0\). Its tangents at \(\pm1\) are collinear.
For small \(z>0\), the actual alternatives 
\[
b_\pm=\pm\sqrt{1+z},\qquad t_\pm=\pm z\sqrt{1+z}
\]
 have identical first two mean coordinates and last-coordinate
difference \(2z^p\sqrt{1+z}\). Taking \(z\asymp n^{-1/(2p)}\) gives
bounded testing difficulty and squared target separation of order
\(n^{-1/p}\).

For the upper bound, the target function on the compact curve image
obeys 
\begin{equation}
|q_p(b)-q_p(c)|\le C\|C_p(b)-C_p(c)\|^{1/p}. \tag{21}
\end{equation}
 Only neighborhoods of the distinct fibre require verification. There
write \(x=b^2-1,y=c^2-1\), with \(b\) positive and \(c\) negative. If
\(xy<0\), the first-coordinate difference \(|x-y|\) controls the target
difference. If \(xy\ge0\), the absolute second-coordinate difference is
\(|b||x|^p+|c||y|^p\), for either parity of \(p\), and controls the
target difference to power \(p\). Local immersion and compactness cover
the remaining pairs. Minimum-distance fitting and Gaussian moments now
prove \eqref{eq:20}; details are in Appendix \ref{app:s1}.

\begin{figure}
\centering
\includegraphics[width=\linewidth]{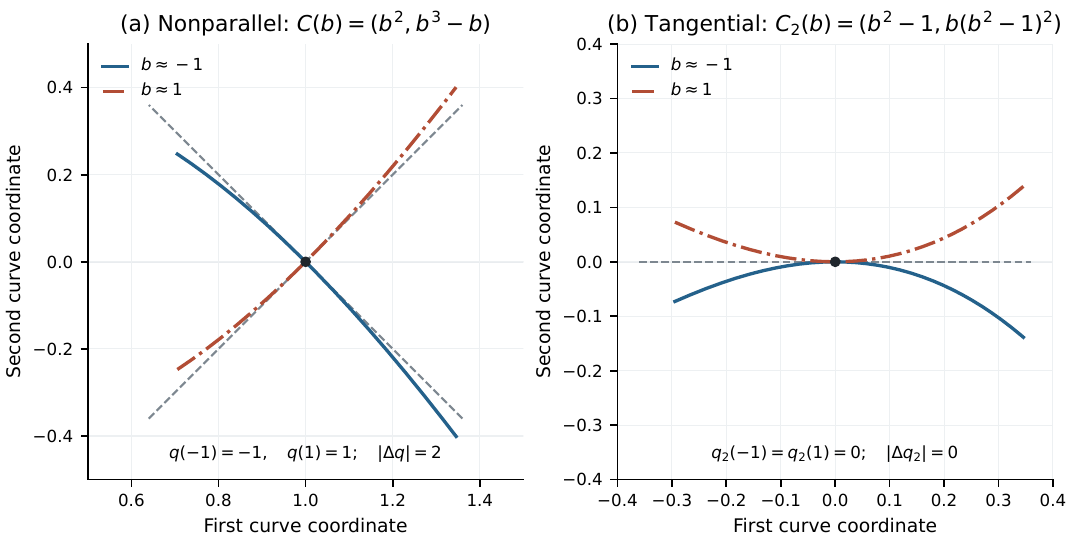}
\caption{Nonparallel and tangential finite-fibre geometry. The curves
are deterministic evaluations of (a) \(C(b)=(b^2,b^3-b)\) and (b)
\(C_2(b)=(b^2-1,b(b^2-1)^2)\) near \(b=\pm1\). Dashed lines show their
tangent directions; endpoint target-factor gaps are two and zero,
respectively. The second geometry illustrates why a zero exact-fibre
target gap does not imply regular target risk without nonparallel branch
separation. No observations or fitted curves are simulated.}
\end{figure}

Thus zero target gaps at the exact fibre do not remove the statistical
contribution of nearby tangential configurations. This polynomial
example is consistent with contact-order analyses: it identifies
precisely the geometric information lost by retaining only exact
endpoint target values.

\subsection{Known-gain repair of a tangential fibre}

The fixed-map constants in Theorem~\ref{thm:global-risk} need not survive collapsing branch
angles. One explicit polynomial family admits a direct uniform analysis
through that limit. Write 
\[
z(b)=b^2-1,\qquad f_j(b)=b z(b)^j.
\]
 Fix integers \(p\ge2\), \(1\le q<p\), \(k_{\mathrm{tan}}\ge1\),
constants \(T,E_*>0\), and a known gain \(\varepsilon\in[0,E_*]\). On
the full box \((b,t)\in[-2,2]\times[-T,T]\), observe independent blocks
and scalars 
\begin{equation}
\begin{aligned}
X_i&\sim N_3((t-f_q(b),z(b),f_p(b)),I_3),&&1\le i\le n,\\
Y_l&\sim N(\varepsilon f_{k_{\mathrm{tan}}}(b),1),&&1\le l\le m.
\end{aligned} \tag{22}
\label{eq:22}
\end{equation}
 The target is \(t\), the nuisance \(b\) is common to both sources,
and the scalar-observation cost is \(3n+m\). The contact order
\(k_{\mathrm{tan}}\) is distinct from the calibration information \(k\)
in Section 8.

\begin{theorem}[gain-uniform polynomial tangency repair]
\label{thm:tangency-repair}
For squared target loss in \eqref{eq:22}, 
\begin{equation}
\mathcal R_\varepsilon(n,m)\asymp
\frac1{1+n}+\min\{(1+n)^{-q/p},
(1+m\varepsilon^2)^{-q/k_{\mathrm{tan}}}\}. \tag{23}
\label{eq:23}
\end{equation}
 The constants depend only on the fixed degrees and box bounds, and
are uniform over every \(n,m\in\mathbb N_0\) and every known
\(\varepsilon\in[0,E_*]\). At \(q=1\), \(T=2\) and \(m=0\), this
recovers Theorem 4.
\end{theorem}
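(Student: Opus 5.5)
The plan is to prove matching lower and upper bounds, separating the regular core floor \((1+n)^{-1}\) from the tangential ambiguity term. Everything is organized around the local coordinate \(z=z(b)\) near the two branches \(b=\pm\sqrt{1+z}\). These branches share \(z(b)\) exactly. Along them, \(f_j(b_\pm)=\pm\sqrt{1+z}\,z^j\), so the two branches differ in the primary target-shift coordinate by \(2\sqrt{1+z}\,z^q\), in the curve coordinate \(f_p\) by order \(z^p\), and in the auxiliary mean by \(2\varepsilon\sqrt{1+z}\,z^{k_{\mathrm{tan}}}\).

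For the \textbf{lower bound}, the floor comes from perturbing \(t\) alone by \((1+n)^{-1/2}\), which leaves the auxiliary law invariant; this is a two-point Le Cam argument. For the ambiguity term I take the alternatives \(b_\pm=\pm\sqrt{1+z}\) with \(t_\pm=f_q(b_\pm)\). These agree in the first two primary coordinates, and their target gap is \(\asymp z^q\) (they stay in \([-T,T]\) for small \(z\)). The KL divergence is then
\[
\asymp n z^{2p}+m\varepsilon^2 z^{2k_{\mathrm{tan}}}.
\]
I choose \(z=c\min\{(1+n)^{-1/(2p)},(1+m\varepsilon^2)^{-1/(2k_{\mathrm{tan}})}\}\), capped by a small constant. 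This keeps the divergence bounded, so the squared target separation is of order \(z^{2q}\), which equals the \(\min\) in \eqref{eq:23}. When \(\varepsilon=0\) or \(m=0\), the second entry is simply \(1\). Since the constants enter only through \(c\), the bound is uniform in \(\varepsilon\in[0,E_*]\).

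For the \textbf{upper bound}, I split by scale. First, minimum-distance fitting of \((z(b),f_p(b))\) to the averaged curve coordinates gives, as in the analysis of Theorem~\ref{thm:tangency-boundary}, a Hölder bound of the form \(|f_q(b)-f_q(c)|\le C\|(z,f_p)(b)-(z,f_p)(c)\|^{q/p}\). The exponent is \(q/p\) because the \(f_q\) gap is \(z^q\) while the \(f_p\) gap is \(z^p\); the cross-sign case \(xy<0\) is handled by the \(z\)-coordinate. Plugging in the fitted nuisance and using Gaussian moments then yields risk \(\lesssim n^{-1}+n^{-q/p}\). Second, to capture the auxiliary term I use the paper's explicit shape-and-sign idea. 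Estimate \(|b|\) from \(\hat z\), giving \(|z|\) to accuracy \(n^{-1/2}\). Decide the sign of \(b\) from the auxiliary average, \(\bar Y\approx \varepsilon\,\mathrm{sgn}(b)\sqrt{1+z}\,z^{k_{\mathrm{tan}}}\), by comparing it with \(\pm\varepsilon f_{k_{\mathrm{tan}}}\) evaluated at the plugged-in \(|\hat b|\). Finally set \(\hat t=\bar X_1+f_q(\hat b)\). A sign error costs \(\asymp z^{2q}\), and it occurs with probability \(\lesssim\exp(-c\,m\varepsilon^2 z^{2k_{\mathrm{tan}}})\) up to the plug-in error. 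Maximizing \(z^{2q}e^{-cm\varepsilon^2z^{2k_{\mathrm{tan}}}}\) over \(z\) gives \((1+m\varepsilon^2)^{-q/k_{\mathrm{tan}}}\). With a correct sign, the error is governed by the \(z\)-estimate, contributing \(O(n^{-1})\) after linearization (with care near \(b=0\), where \(f_q\) as a function of \(z\) is regular away from \(z=-1\)). I would combine the two estimators by taking whichever has the smaller risk bound, which is computable from the known \(n,m,\varepsilon\). That gives the \(\min\).

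The \textbf{main obstacle} is making the sign test uniform when \(|z|\) is only known to accuracy \(n^{-1/2}\). The auxiliary mean must be evaluated at \(\hat z\) rather than \(z\), and the plug-in error \(\varepsilon\,|z|^{k_{\mathrm{tan}}-1}n^{-1/2}\) must not destroy the discrimination at the critical scale. When this plug-in error is comparable to the signal, I expect to fall back on the fact that then \(n^{-q/p}\) or \(n^{-1}\) already dominates, or to use a test that thresholds \(\mathrm{sgn}(\bar Y)\) only, which needs no plug-in. The threshold-only test may in fact suffice: its error is \(\Phi(-\sqrt m\,\varepsilon|f_{k_{\mathrm{tan}}}(b)|)\), which yields the same optimization. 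Setting \(q=1,T=2,m=0\) then reduces \eqref{eq:23} to \((1+n)^{-1/p}\), recovering \eqref{eq:20}.
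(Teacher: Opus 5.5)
Your lower bound is the paper's: the same pair $b_\pm=\pm\sqrt{1+z}$ with $t_\pm=f_q(b_\pm)$, the same choice of $z$, and a floor obtained by perturbing $t$ alone. Your core-only upper bound, a minimum-distance fit of $(z,f_p)$ combined with the H\"older modulus of exponent $q/p$, is a valid alternative to the paper's explicit estimator.

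There is, however, a genuine gap in the auxiliary estimator and in how you combine it. You apply ``$|b|$ from $\hat z$, sign from $\overline Y$'' on the whole box whenever its rate is the smaller one. This fails near $b=0$, that is, near $z=-1$, for two reasons.

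\begin{itemize}
\item The map $\hat z\mapsto\sqrt{1+\hat z}$ is not Lipschitz there. At $b=0$ the correction $\pm\sqrt{1+\hat z}\,\hat z^{q}$ therefore has error of order $n^{-1/4}$, hence squared error of order $n^{-1/2}$, even with the correct sign. For $q=1$, $p=3$, $k_{\mathrm{tan}}=1$, $m\varepsilon^2=n$, your rule selects this estimator, yet the theorem asserts risk $\asymp n^{-1}$.
\item A wrong sign there costs $4b^2z^{2q}\asymp b^2$, not $z^{2q}$. With $w=m\varepsilon^2$, the auxiliary test errs with probability $\Phi(-\sqrt{w}\,|b|\,|z|^{k_{\mathrm{tan}}})\approx\Phi(-\sqrt w\,|b|)$. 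Taking $|b|\asymp w^{-1/2}$ gives a contribution $\asymp(1+w)^{-1}$. When $k_{\mathrm{tan}}<q$ this can exceed both $n^{-1}$ and $(1+w)^{-q/k_{\mathrm{tan}}}$; take $q=2$, $p=3$, $k_{\mathrm{tan}}=1$, $w=n^{0.6}$.
\end{itemize}

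Your parenthetical ``with care near $b=0$'' names the region but supplies no fix. A global choice between two whole estimators leaves no room for one.

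The missing idea is localization. Make the sign decision only in the tangential neighbourhood $|\hat z|\le1/2$. There, on $|\hat z-z|\le1/4$, you have $|b|\ge1/2$, the map $u\mapsto\sqrt{1+u}\,u^q$ is Lipschitz, and a wrong sign costs $\asymp|z|^{2q}$. Elsewhere you need a sign-free estimator. The paper uses $\overline V\hat z^{\,q-p}$ for $|\hat z|>1/2$; this works because $f_q=f_pz^{q-p}$ and $\overline V$ estimates the signed quantity $f_p(b)$ at rate $n^{-1/2}$. Your own curve fit, trusted away from $\pm1$, would serve equally well. The known-count rule should then select only the detector inside that neighbourhood: $\operatorname{sign}(\overline V)\operatorname{sign}(\hat z)^p$ versus $\operatorname{sign}(\overline Y)\operatorname{sign}(\hat z)^{k_{\mathrm{tan}}}$.

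Relatedly, the obstacle you single out is not one. For symmetric candidates $\pm\mu$, the scalar nearest-mean rule is $\operatorname{sign}(\overline Y\mu)$, which does not depend on $|\mu|$, so plug-in magnitude error is irrelevant. What does matter, and what your ``threshold $\operatorname{sign}(\overline Y)$ only'' fallback omits, is the factor $\operatorname{sign}(\hat z)^{k_{\mathrm{tan}}}$. For odd $k_{\mathrm{tan}}$ and $z<0$, the auxiliary mean has sign $-\operatorname{sign}(b)$. With that factor included, the argument closes in two cases:
\begin{itemize}
\item If $\operatorname{sign}(\hat z)\ne\operatorname{sign}(z)$, then $|z|\le|\hat z-z|$, so the loss is $|z|^{2q}\le C|\hat z-z|^2=O(n^{-1})$ in expectation.
\item Otherwise, the weighted-tail bound $\sup_{0\le u\le3/4}u^{2q}\Phi(-\tfrac12\sqrt v\,u^j)\le C(1+v)^{-q/j}$ completes the argument.
\end{itemize}
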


For the lower bound, take \(b_\pm=\pm\sqrt{1+z}\),
\(t_\pm=\pm\sqrt{1+z}z^q\) with small positive \(z\). Their first two
primary means coincide, their squared target separation is
\(4(1+z)z^{2q}\), and their exact divergence is 
\begin{equation}
D_{\mathrm{KL}}=2(1+z)\{nz^{2p}+m\varepsilon^2z^{2k_{\mathrm{tan}}}\}. \tag{24}
\end{equation}
 Choosing the smaller of the two bounded-divergence contact scales
gives the second term in \eqref{eq:23}. A separate pair with \(b=1\) and varying
\(t\) supplies the primary floor independently of the auxiliary
allocation.

The upper estimator first clips the observed shape mean to \([-1,3]\).
Away from zero shape, division of the last primary mean by a
bounded-away-from-zero power of that estimate reconstructs the target
correction with mean squared error \(O(n^{-1})\). Near zero shape, a
single Gaussian sign decision supplies the branch sign; the target
correction has magnitude proportional to \(|z|^q\). The known counts and
gain select the better primary or auxiliary sign detector, and
shape-sign mistakes are absorbed by the vanishing target magnitude.
Final clipping preserves bounded loss. Appendix \ref{app:s7} specifies every
branch and tie and proves the bound on the full box without nonlinear
optimization.

Along \(n\to\infty\), the matching bounds give two different
requirements: 
\begin{equation}
\begin{aligned}
\mathcal R_\varepsilon(n,m)=o(n^{-q/p})
&\quad\Longleftrightarrow\quad m\varepsilon^2/n^{k_{\mathrm{tan}}/p}\to\infty,\\
\mathcal R_\varepsilon(n,m)=O(n^{-1})
&\quad\Longleftrightarrow\quad m\varepsilon^2=\Omega(n^{k_{\mathrm{tan}}/q}).
\end{aligned} \tag{25}
\end{equation}
 For fixed positive gain and total cost \(B=3n+m\), the optimal risk
order is \(B^{-\min\{1,\max(q/p,q/k_{\mathrm{tan}})\}}\). When
\(k_{\mathrm{tan}}<q\), a primary count proportional to \(B\) and an
auxiliary count of order \(B^{k_{\mathrm{tan}}/q}\) attain regular
order. No optimal allocation fraction or exact minimax constant is
asserted.

For \(k_{\mathrm{tan}}=1\), the augmented tangent pair is independent at
every positive gain, with squared cross-product norm
\(64\varepsilon^2\); \eqref{eq:23} is uniform through its collapse. For larger
contact order, the augmented branches can remain tangent. Thus this is a
particular polynomial repair law, not an arbitrary-contact extension of
Theorem 1. Unknown gain is a different inference problem. The sufficient
numerical tolerances in \ref{app:s7} preserve the polynomial risk order; they do
not preserve every rare Gaussian tail in relative error, which is the
distinct precision issue considered next.

\section{Finite certified inference}
\label{sec:main-6}

The measurable estimator of Theorem~\ref{thm:global-risk} allows general known smooth maps.
For \eqref{eq:11}, its nuisance geometry supports a finite certified construction
with explicit tolerances.

\begin{theorem}[certified computation preserves the risk law]
\label{thm:certified-inference}
Suppose the sufficient means are supplied through rational enclosures at
the precisions below, the endpoint \(a\) is effectively represented, and
the fixed functions admit deterministic measurable absolute-accuracy
evaluation. The represented radius must use only the first-coordinate
observations. Its clipping, endpoint
approximation,
and weight oracle must then use only that represented radius, known
counts, and fixed functions, without inspecting the amplitude-score
coordinates. Under this independence-preserving input contract, a finite
estimator attains \eqref{eq:12}. For rational endpoints and represented rational
parameters, the explicit exponential families in Section 4 admit such
evaluation by finite rational enclosures.
\end{theorem}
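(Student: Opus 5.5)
The plan is to replay the measurable estimator from the proof of Theorem~\ref{thm:global-risk}, specialized to \eqref{eq:11}. Each exact operation is replaced by a finite certified version, and every numerical perturbation is absorbed into the same slack terms that already appear in \eqref{eq:8}--\eqref{eq:9}.

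\textbf{Radius and amplitude direction.} Form the pooled radius \(\hat r\) from a rational enclosure of the first-coordinate sample mean alone. Clip it to a rational enclosure of \([-a,a]\); this adds \(O(n^{-1})\) precision to the \(N_L^{-1}\) error. Evaluate \(\psi(\hat r)\) and \((1-\hat r)\) with absolute accuracy \(\eta\), giving an approximate score direction \(\tilde v\). The input contract makes \(\tilde v\) a measurable function of the radius block only. So, conditionally on it, the amplitude statistic \(Y\sim N(bv,I)\) remains Gaussian and independent, exactly as in the exact proof.

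The deterministic error enters additively: \(\|\tilde v-v\|^2\le 2E+2\eta^2 H_{\boldsymbol n}\), where \(E\) is as in \eqref{eq:8}. I would choose \(\eta\le n^{-1}\) and also \(\eta\le (\log n)^{-1}\) relative to \(\sqrt{H}\). Then, in the regime \(H\le K\log n\), the change to \(\mu^2\) in \eqref{eq:9} stays \(o(1)\), and the Mills comparison (10) preserves the tail up to bounded factors. In the regime \(H>K\log n\), the error is swamped as before.

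\textbf{Branch decision and curve fits.} Select between the candidates \(\pm\tilde v\), since the only fibre is \(\{-1,1\}\). This reduces to the sign of the inner product \(\tilde v\cdot Y\), computed from rational enclosures of \(Y\). A mis-evaluation can flip the sign only when \(|\tilde v\cdot Y|\) lies below the enclosure width \(\delta\). I would bound that event's probability by \(\delta/\sqrt{2\pi}\) via the Gaussian density bound, and take \(\delta\le n^{-2}\) so the contribution is \(O(n^{-1})\) after multiplying by the bounded loss.

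The curve fit minimizes \(\|(x_1,x_2)-(b^2,b^3-b)\|^2\) over \(b\in[-2,2]\), which has a quintic derivative. The plan is to isolate its real roots with certified bisection and Descartes' rule (Sagraloff) to width \(n^{-1}\), and compare candidate objective values with interval arithmetic. Ties within \(n^{-1}\) are broken arbitrarily: such near-optimizers still satisfy the residual bound that feeds \eqref{eq:6}--\eqref{eq:7}, with an extra \(O(n^{-1/2})\) in curve distance. This preserves the \((1+n)^{-1}\) floor.

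The target-class refit restricts \(b\) to the sign selected above, which is the same quintic on a half-interval. The target estimate is then \(\hat t=\text{(shifted coordinate)}+\hat b\chi(\hat r)\), with \(\chi\) evaluated to accuracy \(n^{-1/2}\) and clipped to \([-2,2]\). For the exponential family \eqref{eq:14}, rational enclosures of \(\exp\) on rational arguments with certified error (Taylor series with remainder, and argument reduction for \(\exp(1/r^2)\)) supply the required evaluation. Handling the double exponential needs only rational arithmetic and finitely many terms, since the absolute accuracy is the target.

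\textbf{Main obstacle.} The hardest step is the exponentially sensitive acquisition window. There \(H\asymp\log n\) and the relevant tail \(\Phi(-\sqrt{x})\) is tiny, so relative, not absolute, preservation of the error probability is required. An absolute error in the decision threshold of size \(\delta'\) multiplies the tail by \(e^{O(\delta'\sqrt{\log n})}\). I would therefore require the score error \(\eta\sqrt{H}+\delta\) to be \(O((\log n)^{-1/2})\), and check that the upper and lower tail comparisons in the proof of Theorem~\ref{thm:global-risk} absorb this as a bounded factor, uniformly over \(m\).
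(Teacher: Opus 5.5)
Your route is the paper's. You replay the measurable estimator with an independence-preserving radius, certify the quintic fit, and use an absolute-accuracy oracle. You control the weight error through the Glaeser bound \eqref{eq:8} and keep the tail with the projection inequality and the Mills comparison. Your final paragraph is exactly the threshold-shift argument behind (28). Two steps, however, are misstated precisely where absolute versus relative precision matters.

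\emph{Weight error.} An absolute error \(\eta\) in \(\psi(\hat r)\) moves the weight vector by \(\sqrt n\,\eta\); the factor \(1-\hat r\) is exact rational arithmetic. So \(\|\tilde v-v\|^2\le 2E+2n\eta^2\), not \(2E+2\eta^2H\). The latter is a relative-accuracy bound, which a flat \(\psi\) cannot cheaply supply and which the theorem deliberately avoids. Your choice \(\eta\le n^{-1}\) already gives \(n\eta^2\le n^{-1}\). Since \(\rho^2\ge b^2(H-\|\tilde v-v\|^2)\), the only requirement for \(H\le K\log n\) is that \(\|\tilde v-v\|^2\) be bounded (here it is \(o(1)\)). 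Drop the condition ``\(\eta\le(\log n)^{-1}\) relative to \(\sqrt H\)'' and the heuristic score error \(\eta\sqrt H\).

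\emph{Sign flips.} Your anti-concentration bound holds only in normalized units. Rounding \(Y\) moves \(\tilde v\cdot Y\) by at most \(\|\tilde v\|\,\|Y^\#-Y\|\) (Cauchy--Schwarz, as in \eqref{eq:s3-16}), so the flip window divided by \(\|\tilde v\|\) is independent of \(\|\tilde v\|\). A fixed window \(\delta\) on the unnormalized product has probability up to \(2\delta/(\sqrt{2\pi}\|\tilde v\|)\). That is useless in the flat regime, where \(\|\tilde v\|\) is tiny. A vanishing computed weight also needs its own convention, such as the paper's fallback \eqref{eq:s3-14}, where \(H<1\) is forced.

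More importantly, an additive flip bound needs normalized score precision \(O(n^{-1})\), far finer than the theorem supplies. With score means given to \((64N)^{-1}\), the normalized shift is only \(O(N^{-1/2})\). An additive \(O(N^{-1/2})\) error is not absorbed by the right side of \eqref{eq:12}, for instance at \(r\) where \(\chi(r)\) is of order one and \(H\) is large. Under the stated precisions you must use the multiplicative argument of your last paragraph, as the paper does. The shift \(\zeta=C/\sqrt N\) satisfies \(\zeta\sqrt{\log n}\to0\). So the wrong-sign probability stays within a bounded factor of \(\Phi(-\sqrt H)\) for \(H\le K\log n\), and is \(O(n^{-3})\) above that range.

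\emph{Smaller points.}
\begin{enumerate}
\item Descartes or Sturm isolation needs square-free reduction first, because the stationary quintic can have repeated roots.
\item Ties must be broken by a fixed rule rather than arbitrarily, or measurability is lost.
\item \(\chi_\alpha\) must be enclosed directly as \(\exp\{-\tfrac{\alpha}{2}e^{1/r^2}\}\). Computing \((\psi^\#)^\alpha\) from an \(n^{-1}\)-accurate \(\psi^\#\) is only \(n^{-\alpha}\)-accurate.
\item For \(\psi_*\), the paper returns \([0,2^{-P}]\) before forming \(e^{1/r^2}\) whenever \(1/r^2\) is large. Finiteness holds without this, but it avoids an astronomically large intermediate.
\end{enumerate}

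Your two genuine variations are harmless. Refitting over the half-interval of the selected sign, instead of the paper's magnitude \(\sqrt{\operatorname{clip}_{[1/4,4]}U^\#}\), is justified by \eqref{eq:s3-3}--\eqref{eq:s3-4}. Root width \(n^{-1}\) gives curve excess \(O(n^{-1/2})\) on the good curve event, hence squared fit error \(O(n^{-1})\).
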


For observed curve means \(U,V\), minimize 
\[
P_{U,V}(b)=(b^2-U)^2+(b^3-b-V)^2,
\]
 on \([-2,2]\). Its stationary equation is 
\begin{equation}
\tfrac12P'_{U,V}(b)
=3b^5-2b^3-3Vb^2+(1-2U)b+V=0. \tag{26}
\end{equation}
 The leading coefficient is always three. Every global minimum is an
endpoint or one of at most five real stationary roots. Repeated roots
must be retained even when the derivative does not change sign.
Square-free reduction and Sturm root counts isolate every distinct root;
evaluating endpoint and root-interval candidates certifies a global
objective tolerance. A local optimizer alone does not provide this
guarantee.

Let \(N=n+m\), and let \(X,U,V\) denote the shifted-target and curve
means, \(R\) the pooled radius mean, and \(Y,B\) the amplitude means.
The following absolute tolerances suffice for \(n\ge2\):

\begin{center}
\small
\begin{tabular}{@{}lr@{}}
\toprule
Computed quantity & Allowed error \\
\midrule
Represented \(X,U,V\) & \((64n)^{-1}\) each \\
Represented \(R,Y,B\) & \((64N)^{-1}\) each \\
Nuisance objective excess & \((64n^2)^{-1}\) \\
Evaluation of \(\psi\) & \((64N)^{-1}\) \\
Evaluation of \(\chi\) and magnitude square root & \((64n)^{-1}\)
each \\
Additional normalized score arithmetic & \(N^{-1/2}\) \\
\bottomrule
\end{tabular}
\end{center}

The rational-arithmetic construction evaluates its score exactly, so the
last allowance is optional. It clips function outputs to their
prescribed ranges and uses a deterministic convention when the computed
weight norm is too small to require discrimination. All root intervals,
objective comparisons, clipping and ties are specified in Appendix \ref{app:s3}.

Two bounds explain the statistical transfer. A certified objective
excess of order \(n^{-2}\) gives a curve residual bounded by twice the
actual Gaussian curve noise plus \(n^{-1}\). Thus local-fit and
magnitude errors retain squared order \(n^{-1}\). For the weights,
nonnegative-extension control gives 
\begin{equation}
|\psi(s)-\psi(r)|\le C\sqrt{\psi(r)}|s-r|+C|s-r|^2. \tag{27}
\end{equation}
 The displayed evaluation tolerances then retain the squared weight
bound \eqref{eq:8}, including at arbitrarily flat points. Rounding the amplitude
data changes the normalized score by at most \(C/\sqrt N\). In the only
consequential region, \(H=O(\log n)\), these changes alter the squared
discrimination level by \(o(1)\). Above that range the error is absorbed
by the core floor.

Absolute precision is essential to this formulation. The estimator need
not evaluate an astronomically small flat mean with small relative
error. For \(e^{-x}\), rational Taylor bounds for \(e^x\) can be
inverted; if \(x\ge P\), the interval \([0,2^{-P}]\) already suffices
because \(e>2\). For the nested exponential, this small-value test can
be applied before forming the inner exponential. The function
\(\chi_\alpha\) is evaluated directly, rather than by raising an
underflowed approximation of \(\psi\) to a fractional power.

Conversely, an unrestricted small-score-error assertion would be false.
An adverse threshold displacement \(\zeta\) changes a simple Gaussian
error by the ratio 
\begin{equation}
\frac{\Phi(-\sqrt H+\zeta)}{\Phi(-\sqrt H)}
\sim\frac{\sqrt H}{\sqrt H-\zeta}
 e^{\zeta\sqrt H-\zeta^2/2}, \tag{28}
\end{equation}
 when \(H\to\infty\) and \(\zeta=o(\sqrt H)\). Hence \(\zeta\to0\)
alone is insufficient. A bounded product \(\zeta\sqrt{\log n}\) suffices
for bounded-factor tail precision in the consequential region; a
vanishing product preserves its conditional ratio-one tail. This is not
a necessary precision theorem for every possible arithmetic procedure.

At fixed polynomial degree, the post-observation root computation has
bit cost polynomial in the represented input length and \(\log n\). This
does not eliminate the cost of acquiring observations or evaluating
arbitrary functions. Smoothness alone does not imply
computability,
and no runtime uniform in unbounded Gaussian data magnitudes is
asserted.

\section{Acquisition under explicit source costs}
\label{sec:main-7}

Fix positive block costs \(c_F,c_G\), independent of the total budget
\(B\), and restrict integer allocations by 
\begin{equation}
c_Fn+c_Gm\le B. \tag{29}
\end{equation}
 Taking \((c_F,c_G)=(5,2)\) counts scalar measurements in \eqref{eq:11}; unit
costs count blocks. Other costs represent an explicitly supplied
economic objective.

\begin{theorem}[order-optimal allocation and its boundary]
\label{thm:cost-allocation}
For every fixed admissible model \eqref{eq:11},
the minimum feasible risk has order \((1+B)^{-1}\). A feasible sequence
attains \(O(B^{-1})\) if and only if 
\begin{equation}
n_B=\Omega(B),\qquad S(n_B,m_B)=O(B^{-1}), \tag{30}
\label{eq:30}
\end{equation}
 where \(S\) is the supremum in \eqref{eq:12}. For the family \eqref{eq:14}, put
\(\ell_B=\log(B/c_F)\). Then \eqref{eq:30} is equivalent to 
\begin{equation}
n_B=\Omega(B),\qquad
m_B\ge
\frac{2(1-\alpha)\ell_B-\log\ell_B}
 {[1-(\log\ell_B)^{-1/2}]^2}-O(1). \tag{31}
\label{eq:31}
\end{equation}

\end{theorem}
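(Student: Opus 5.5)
The plan is to reduce everything to Corollary~\ref{thm:full-box-risk} and, for the family \eqref{eq:14}, to the window law \eqref{eq:16}--\eqref{eq:17} of Theorem~\ref{thm:critical-jets}, re-expressed in the budget variable. I write \(S(n,m)\) for the supremum in \eqref{eq:12}, so that \(\mathcal R(n,m)\asymp(1+n)^{-1}+S(n,m)\) uniformly over all integer allocations.

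\textbf{Lower bound and characterization.} Every feasible allocation has \(n\le B/c_F\), so \(\mathcal R(n,m)\gtrsim(1+n)^{-1}\gtrsim(1+B)^{-1}\). For attainability, take \(n=\lfloor B/(2c_F)\rfloor\) and \(m=\lfloor B/(2c_G)\rfloor\). Then \(n\psi(r)^2+m(1-r)^2\ge m(1-a)^2\gtrsim B\), since \(a<1/2\). Hence \(S\le\Phi(-c\sqrt B)=o(B^{-1})\), and the risk is \(O(B^{-1})\). The equivalence \eqref{eq:30} follows directly from the two-sided comparison \eqref{eq:12}. Indeed, \(\mathcal R=O(B^{-1})\) forces \((1+n_B)^{-1}=O(B^{-1})\), that is \(n_B=\Omega(B)\), and also forces \(S=O(B^{-1})\). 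Conversely, these two conditions bound both terms. Small \(B\) is trivial, because risk is bounded by \(4T_*^2\).

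\textbf{The family \eqref{eq:14}.} Under \(n_B=\Omega(B)\) and \(n_B\le B/c_F\), we have \(\ell:=\log n_B=\ell_B+O(1)\). The first step is to check that replacing \(\ell\) by \(\ell_B\) in \(2(1-\alpha)\ell-\log\ell\) and in \(r_\ell=(\log\ell)^{-1/2}\) changes the threshold \eqref{eq:17} by only \(O(1)\). The shift in \(\ell\) contributes \(O(1)\). The shift in \(r_\ell\) is \(O(1/(\ell\,(\log\ell)^{3/2}))\), and multiplying it by \(\ell\) still gives \(o(1)\).

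Next, split on \(m_B\). If \(m_B\le C_0\ell\) for a large fixed \(C_0>2\), apply \eqref{eq:16}. The condition \(S=O(n^{-1})\) becomes
\[
n^{1-\alpha}(1+m)^{-1/2}e^{-m(1-r_\ell)^2/2}=O(1).
\]
Taking logarithms, and using \(\log(1+m)=\log\ell+O(1)\) when \(m\asymp\ell\), this is exactly \(m\ge[2(1-\alpha)\ell-\log\ell]/(1-r_\ell)^2-O(1)\). If instead \(m\) is of order much smaller than \(\ell\), the condition fails, so the threshold inequality being false is consistent. If \(m_B>C_0\ell\), then \eqref{eq:31} holds automatically. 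In that case monotonicity of \(S\) in \(m\) gives \(S(n,m)\le S(n,\lfloor C_0\ell\rfloor)\), which is \(O(n^{-1})\) by the previous case since \(C_0\ell\) exceeds the threshold. Together these give the equivalence \eqref{eq:30}\(\Leftrightarrow\)\eqref{eq:31}.

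\textbf{Main obstacle.} The delicate point is the bookkeeping of the \(O(1)\) terms. The condition \(\log(1+m)=\log\ell+O(1)\) holds only near the threshold, so the case \(m\ll\ell\) has to be handled separately. There I would argue directly from \eqref{eq:16}: the left side grows like \(n^{1-\alpha}e^{-O(m)}\to\infty\) whenever \(m\le(2(1-\alpha)-\delta)\ell\). This gives a monotone sandwich, after which only the window \(m\asymp\ell\) requires the exact logarithmic expansion. I would also confirm that the \(\ell\)-dependence of \(r_\ell\) passes through the uniformity in \eqref{eq:16}, which holds because that statement is uniform over \(m\le C_0\ell\).
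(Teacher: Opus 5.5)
Your proposal is correct and follows essentially the same route as the paper's proof in Appendix S6. Both arguments use the core-floor lower bound and read both directions of (30) off the two-sided comparison. Both then transfer the window from $\log n_B$ to $\ell_B$ using the bounded derivative of the window function, apply the logarithmic-range comparison (16) with a sandwich around $m\asymp\ell$, and treat $m>C_0\ell$ separately. Your only departures are minor and valid: you attain the order with a linear rather than logarithmic resolver count, and for $m>C_0\ell$ you use monotonicity of $S$ in $m$ instead of the paper's direct bound $S\le\Phi\{-(1-a)\sqrt m\}$.
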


The core-invariant lower-bound pair proves that \(n_B=\Omega(B)\) is
necessary. For sufficiency in the general model,
\(m_B=\lceil\gamma\log(1+B)\rceil\), with \(\gamma>2/(1-a)^2\), and
allocation of the remaining cost to the core yield \(O(B^{-1})\). For
\eqref{eq:14}, the fixed-core window \eqref{eq:17}, together with
\(\log n_B=\ell_B+O(1)\), gives \eqref{eq:31}. The derivative of its window
function remains bounded. Taking the ceiling of the displayed resolver
boundary and then the floor of the affordable core count changes the
boundary by only a bounded amount; feedback from the resolver cost is
\(O(\log B/B)\). Appendix \ref{app:s6} supplies the complete all-count argument.

This theorem identifies the minimum resolver use needed for a rate
order. It does not identify the exact total-risk optimizer. To see the
difference, define the explicit continuous proxy 
\[
\Pi_B(m)=\frac{c_F}{B-c_Gm}
+A(B/c_F)^{-\alpha}(1+m)^{-1/2}e^{-a_Bm},
\]
 where \(A>0\) and \(a_B=[1-(\log\ell_B)^{-1/2}]^2/2\). Strict
convexity gives a unique minimizer, and its derivative equation yields

\begin{equation}
m_B^{\mathrm{proxy}}=
\frac{2(2-\alpha)\ell_B-\log\ell_B}
 {[1-(\log\ell_B)^{-1/2}]^2}+O(1). \tag{32}
\end{equation}
 The integer proxy minimizer is one of the adjacent integers. It uses
a larger leading logarithmic count because it balances marginal core
cost against an ambiguity term of order \(B^{-2}\), whereas attaining
the regular order requires only an ambiguity term of order \(B^{-1}\).
Neither the proxy constant nor its optimizer is identified with the
unknown exact minimax risk. Logarithmic resolver necessity also does not
hold for every admissible function: \(\chi\equiv0\) requires none.

\section{A shared-direction calibration extension}
\label{sec:main-8}

The finite count-independent constellation in Theorem~\ref{thm:global-risk} preserves a
simple Gaussian prefactor. A continuous nuisance orbit requires separate
analysis. Fix \(d\ge1\), let \(u\in S^{d-1}\) be an unknown unit
direction, and consider 
\begin{equation}
A\sim N_d(\sqrt k\,u,I_d),\qquad
B\sim N_d(\zeta R u,I_d),\qquad \zeta\in\{0,1\}. \tag{33}
\label{eq:33}
\end{equation}
 Here \(k\) is calibration information and \(R^2\) is detector
information. Define \(e_d(R,k)\) as the minimax binary classification
error over the common direction and both labels, allowing randomized
tests. The calibration and detector directions are not optimized
independently.

Let \(\mu_d\) be uniform probability measure on the sphere and define 
\[
Z_d(t)=\int_{S^{d-1}}e^{tu_1}\,d\mu_d(u).
\]
 The ratio of the two direction-mixture densities is 
\begin{equation}
L_{R,k}(a,b)=e^{-R^2/2}
\frac{Z_d(|\sqrt k\,a+Rb|)}{Z_d(\sqrt k\,|a|)}. \tag{34}
\end{equation}
 Orthogonal-group averaging reduces minimax testing to these two
mixtures. For \(R>0\), a likelihood-ratio threshold that equalizes their
errors is minimax and has the same error for every direction in each
class. At \(R=0\), the classes coincide and a fair decision has error
\(1/2\).

\begin{proposition}[fixed-radius calibrated discrimination]
\label{thm:calibrated-discrimination}
Write 
\[
M_d(k)=\int\sqrt{dP_k\,d\overline P_k},\quad
P_k=N_d(\sqrt k e_1,I_d),\quad
\overline P_k=\int N_d(\sqrt k u,I_d)\,d\mu_d(u),
\]
 where the affinity is not squared, and let
\(K_d=2^{2-d}/[\pi^{1/4}\sqrt{\Gamma(d/2)}]\). For fixed \(d\), as
\(R\to\infty\), 
\begin{equation}
e_d(R,k)=(1+o(1))K_dM_d(k)R^{(d-3)/2}e^{-R^2/8}
(1+2k/R^2)^{(d-1)/4}, \tag{35}
\label{eq:35}
\end{equation}
 uniformly over all \(k\ge0\). In the joint regime \(R,k\to\infty\),

\begin{equation}
e_d(R,k)\sim\Phi(-R/2)(1+R^2/(2k))^{(d-1)/4}. \tag{36}
\end{equation}
 For all \(R,k\ge0\), 
\begin{equation}
e_d(R,k)\asymp
\frac{e^{-R^2/8}}{1+R}
\left(1+\frac{R^2}{1+k}\right)^{(d-1)/4}. \tag{37}
\end{equation}

\end{proposition}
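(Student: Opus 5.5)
We plan to use the reduction stated just before the proposition. By orthogonal invariance, the minimax error $e_d(R,k)$ is the common error of the equalizing likelihood-ratio test between the two direction mixtures
\[
Q_0=\int N_d(\sqrt k u,I)\otimes N_d(0,I)\,d\mu_d(u),\qquad
Q_1=\int N_d(\sqrt k u,I)\otimes N_d(Ru,I)\,d\mu_d(u),
\]
whose ratio is $L_{R,k}$ in (34). We then condition on the calibration block $A=a$. Under both labels its marginal is $\overline P_k$. Given $A=a$, the posterior of $u$ is von Mises--Fisher with concentration $\kappa=\sqrt k|a|$ about $a/|a|$. Under label $0$, $B\sim N(0,I)$. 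Under label $1$, $B$ is the corresponding vMF mixture of $N(Ru,I)$. Each type error is therefore an $a$-average of a conditional $b$-integral over the half-space-like region $\{L_{R,k}\gtrless\tau\}$. We first show that the equalizing threshold satisfies $\log\tau=O(\log R)$ uniformly in $k$. For this we compare the two errors at $\tau=1$ and use monotonicity of each error in $\tau$. A polynomial threshold shift only changes the tails by $1+o(1)$ after the Gaussian-tail computation below; we will check this explicitly rather than assume it.

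Next we use $Z_d(t)=c_d t^{-(d-1)/2}e^{t}(1+O(1/t))$ as $t\to\infty$, together with boundedness near $t=0$. With it, the decision boundary becomes
\[
|\sqrt k a+Rb|-\sqrt k|a|=R^2/2+O(\log R).
\]
This is a perturbed sphere crossing the segment toward $Ru$ near $b\approx Ru/2$. We perform a Laplace computation in coordinates radial and transverse to $\sqrt k a+Rb$. The radial direction produces a one-dimensional Gaussian tail at distance $\approx R/2$. This contributes $R^{-1}e^{-R^2/8}$ times the Mills factor, by (10). The $d-1$ transverse directions produce a Gaussian integral whose effective precision combines the vMF concentration $\sqrt k|a|\approx k$ with the boundary curvature at radius $\approx R^2/2$. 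This yields the factor $R^{(d-1)/2}(1+2k/R^2)^{(d-1)/4}$, up to constants. Integrating the remaining $a$-dependence against $\sqrt{d\overline P_k\,dP_k}$ then identifies the affinity $M_d(k)$. It enters because at the boundary the conditional densities of the two labels agree, so each error density is $\sqrt{q_0q_1}$ to leading order. Collecting the constants $c_d$, $\Gamma(d/2)$ and $\pi^{1/4}$ gives $K_d$ and hence (35).

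For (36), we show that $M_d(k)\to1$ is false in general: rather $M_d(k)\sim c\,k^{-(d-1)/4}$ by the same Laplace method applied to the affinity between a point mass direction and its uniform mixture. Substituting this and $\Phi(-R/2)\sim(2/R)\varphi(R/2)$ into (35) converts the prefactor into $\Phi(-R/2)(1+R^2/(2k))^{(d-1)/4}$. The residual $(1+o(1))$ terms require $k\to\infty$ so that the vMF and $Z_d$ asymptotics are sharp. For (37), we split into two cases.
\begin{itemize}
\item \emph{Bounded $R$:} the error lies in $[c,1/2]$, since the mixtures are distinct for $R>0$ and both sides of (37) are $\asymp1$. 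The case $R=0$ is the fair decision.
\item \emph{Large $R$:} we use (35) with the two-sided bound $M_d(k)\asymp(1+k)^{-(d-1)/4}$ valid for all $k\ge0$. We then note the identity
\[
(1+2k/R^2)(1+k)^{-1}\asymp R^{-2}\bigl(1+R^2/(1+k)\bigr),
\]
which turns $R^{(d-3)/2}$ into $R^{-1}\bigl(1+R^2/(1+k)\bigr)^{(d-1)/4}$.
\end{itemize}

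The main obstacle is uniformity over all $k\ge0$ in (35). When $k\lesssim1$, $\sqrt k|a|$ is not large, so the $Z_d$ asymptotic for the denominator fails. When $k\gg R^2$, the transverse Gaussian integral is dominated by the calibration concentration rather than the boundary curvature. We would handle this by keeping the denominator $Z_d(\sqrt k|a|)$ exact throughout, and using the $Z_d$ asymptotic only in the numerator, where $|\sqrt k a+Rb|\gtrsim R^2$ on the relevant region. This is precisely why $M_d(k)$ appears as an unexpanded integral. We would then prove dominated convergence of the rescaled transverse integrand uniformly in $k$ via the single variable $2k/R^2\in[0,\infty]$, treating $[0,\infty]$ as compact. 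Contributions from $|b|$ far from $R/2$, or from $a$ in atypical regions, would be bounded by crude Gaussian tails that are $o(e^{-R^2/8}R^{-M})$.
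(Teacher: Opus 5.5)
\textbf{Gap: the equalizing threshold.} Conditioning on $A=a$ is a reasonable start: given $a$, the event $\{L_{R,k}>\tau\}$ is exactly the exterior of a ball centred at $-\sqrt k\,a/R$ in $b$-space. The problem is the step that fixes the constant in (35). Your claim that a threshold shift with $\log\tau=O(\log R)$ changes the tails only by $1+o(1)$ is false. Near the boundary, $\log L\approx R(|B|-R/2)$ plus lower-order terms. Replacing the threshold $1$ by $\tau$ therefore moves the radial cut by $\log\tau/R$ and multiplies the Gaussian density at radius $R/2$ by $\tau^{-1/2}$. The correct uniform asymptotics, (S4.15) and (S4.19) in the paper, are $\alpha_{R,k}(\tau)\sim\tau^{-1/2}X_{R,k}$ and $\beta_{R,k}(\tau)\sim\tau^{1/2}X_{R,k}$, with one common prefactor $X_{R,k}$ equal to the right side of (35). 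A threshold of size $R^{c}$ changes the error by $R^{\mp c/2}$. So knowing only $\log\tau=O(\log R)$ determines $e_d$ up to a power of $R$, not up to $1+o(1)$.

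What you need is to prove both asymptotics uniformly in $k$ for $\tau$ in a fixed compact neighbourhood of $1$. Monotonicity in $\tau$ then traps the equalizer, giving $\tau_{R,k}\to1$ uniformly, and only then does $e_d=\alpha_{R,k}(\tau_{R,k})\sim X_{R,k}$ follow. Your own $\sqrt{q_0q_1}$ heuristic exhibits the mechanism: on $\{L=\tau\}$ one has $q_0=\tau^{-1/2}\sqrt{q_0q_1}$ and $q_1=\tau^{1/2}\sqrt{q_0q_1}$. For type II, the paper does not integrate under the vMF mixture. It writes $\beta=E_0[L\,\mathbf{1}\{L\le\tau\}]$ and bounds this integrand separately away from the boundary, since under the alternative the acceptance event is not a radial tail event.

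\textbf{Gap: uniformity over $k$.} Compactifying the single variable $2k/R^2$ is not enough. The limit in (35) depends on $k$ itself: every bounded $k$ has $2k/R^2\to0$, yet each carries a different factor $M_d(k)$. The approximations are also controlled by $k$. The vMF and $Z_d$ expansions need $\sqrt k|a|\to\infty$. Keeping $Z_d(\sqrt k|a|)$ exact requires linearizing the numerator, with error of order $k|a|^2/R^2$. A compactness argument must range over both $k$ and $2k/R^2$, with separate expansions.

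The paper splits at $k=R^{1/4}$.
\begin{itemize}
\item For $k\le R^{1/4}$, write $B=rv$ and linearize $|\sqrt kA+Rrv|\approx Rr+\sqrt kA\cdot v$. The whole directional effect is then $\sqrt{C_k(A,v)}$, with $C_k=c_d2^{(d-1)/2}e^{\sqrt kA\cdot v}/Z_d(\sqrt k|A|)$, whose exact mean is $\sqrt{c_d}\,2^{(d-1)/4}M_d(k)$. The power $R^{(d-3)/2}$ comes from the chi-density factor $r^{d-1}$, the radial shift $(d-1)\log R/R$, and $1/r\approx2/R$, not from a curvature integral.
\item For $k\ge R^{1/4}$, expand jointly in polar coordinates: $\log L=\delta-a_0-D\theta^2+o(1)$ with $1/D=2/k+4/R^2$. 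This gives $R^{-1}e^{-R^2/8}(1+R^2/(2k))^{(d-1)/4}$, which is converted only afterwards via $M_d(k)\sim2^{(d-1)/2}\sqrt{c_d}\,k^{-(d-1)/4}$.
\end{itemize}
Take care that the vMF concentration is counted once. After the $a$-average it is exactly what produces $M_d(k)$, so it cannot also appear as an independent transverse Gaussian factor.

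\textbf{Minor point in (37).} For bounded $R$, the lower bound uniform in $k$ does not follow from distinctness of the mixtures. Use the known-direction submodel instead: $e_d(R,k)\ge\Phi(-R/2)$.
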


Appendix \ref{app:s4} gives the complete invariant reduction, affinity formula,
and uniform asymptotic proof. The bounded-calibration affinity in \eqref{eq:35}
cannot be replaced by its large-\(k\) approximation when asserting an
exact constant. For \(d\ge2\), bounded-factor known-direction
performance requires \(k=\Omega(R^2)\), while a ratio tending to one
requires \(k/R^2\to\infty\). At \(d=1\), the direction set has only two
elements and creates no unbounded power penalty.

\subsection{Transfer through an unknown contact coordinate}

Now let the common parameter be
\((x,\zeta,u)\in[0,a]\times\{0,1\}\times S^{d-1}\), with \(0<a<1/2\).
Fix \(\psi\in C^2([0,a])\), positive for \(x>0\), with
\(\psi(0)=\psi'(0)=0\). Observe 
\begin{equation}
\begin{aligned}
(X_i,V_i)&\sim N_{d+1}((x,\zeta\psi(x)u),I),&&i=1,\ldots,n,\\
U_j&\sim N_d(\zeta(1-x)u,I),&&j=1,\ldots,m,\\
C_l&\sim N_d(u,I),&&l=1,\ldots,k.
\end{aligned} \tag{38}
\label{eq:38}
\end{equation}
 The target is \((x,\zeta)\), with loss
\((\widehat x-x)^2+(\widehat\zeta-\zeta)^2\), and \(u\) is nuisance.
There are \((d+1)n+d(m+k)\) scalar observations. Put 
\begin{equation}
H_\psi(n,m)=\min_{0\le x\le a}\{n\psi(x)^2+m(1-x)^2\}. \tag{39}
\label{eq:39}
\end{equation}
 Let \(\mathcal E_d\) denote the minimax label error, allowing
randomized tests, and \(\mathcal R_d\) the minimax squared target risk
in \eqref{eq:38}. The large-core transfer construction below is a deterministic
function of counted observations.

\begin{theorem}[unknown-coordinate calibration transfer]
\label{thm:calibration-transfer}
Uniformly over all nonnegative integer triples,

\begin{equation}
\mathcal R_d(n,m,k)\asymp(1+n)^{-1}
+e_d(\sqrt{H_\psi(n,m)},k). \tag{40}
\label{eq:40}
\end{equation}
 For each fixed \(C_0<\infty\), 
\begin{equation}
\sup_{0\le m\le C_0\log n,\ k\ge0}
\left|\frac{\mathcal E_d(n,m,k)}
{e_d(\sqrt{H_\psi(n,m)},k)}-1\right|\longrightarrow0. \tag{41}
\end{equation}

\end{theorem}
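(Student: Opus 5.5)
We plan to prove \eqref{eq:40} by separate lower and upper bounds, and then sharpen the upper-bound analysis to obtain the ratio statement (41). Proposition~\ref{thm:calibrated-discrimination} is the source of all label-error asymptotics.

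\emph{Lower bounds.} The core floor $(1+n)^{-1}$ comes from a two-point argument in $x$ with $\zeta=0$. With $\zeta=0$, only the $X_i$ depend on $x$. Perturbing $x$ by order $(1+n)^{-1/2}$ inside $[0,a]$ therefore gives bounded divergence and squared separation $\asymp(1+n)^{-1}$.

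For the label term, fix a minimizer $x_*$ of \eqref{eq:39}. We compare the class $\{(x_*,1,u)\}$ against the class $\{(x_*,0,u)\}$, with $u$ common and ranging over the sphere. Because $x_*$ is known and fixed, the $X_i$ are ancillary. Averaging the $V_i$ and $U_j$ blocks gives a sufficient detector statistic $N_d(\zeta\sqrt{H}\,u,I)$ with $H=n\psi(x_*)^2+m(1-x_*)^2$. The $C_l$ average to $N_d(\sqrt k\,u,I)$. This is exactly the experiment \eqref{eq:33} with $R=\sqrt{H_\psi}$. Any estimator $\widehat\zeta$ yields a test by rounding, so the label loss is at least $e_d(\sqrt{H_\psi},k)/4$. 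Restricting the supremum to $x=x_*$ is legitimate for a minimax lower bound, and taking the hardest radius gives the minimum in \eqref{eq:39}.

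\emph{Upper bound.} We estimate $x$ by the clipped pooled mean of the $X_i$ when $n\ge1$, which contributes $O(n^{-1})$. The label error is zero-one and bounded, so the $(1+n)^{-1}$ term also covers $n=0$.

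For $\zeta$, we split the radius observations. One option is to use half of the core blocks for $\hat x$ and keep the other half independent; this costs only constant factors through the $\asymp$ and monotonicity of $e_d$ in $R$. The cleaner route, which we prefer, follows the paper's device of conditioning on the independent radius coordinates: the $X_i$ are independent of $(V,U,C)$. Given $\hat x$, we form the whitened detector $\widehat W=\hat w^{-1}\bigl(\sum_i\psi(\hat x)V_i+\sum_j(1-\hat x)U_j\bigr)$, with weight vector norm $\hat w$, and apply the equalizing likelihood-ratio test of \eqref{eq:34} with $R$ replaced by $\hat w$.

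The key estimate mirrors \eqref{eq:8}--\eqref{eq:9}. Under $\zeta=1$ the mean of $\widehat W$ is $\hat\mu u$ with
\[
\hat\mu=\frac{n\psi(\hat x)\psi(x)+m(1-\hat x)(1-x)}{\hat w}.
\]
The projection inequality gives $\hat\mu^2\ge H(x)-E$. Here $E$ is bounded using \eqref{eq:27}, i.e.\ nonnegative-extension control of $\psi$, together with $|\hat x-x|=O(\sqrt{\log n/n})$ on an event of probability $1-O(n^{-8})$. We must also control the threshold mismatch between $\hat w$ and $\hat\mu$. In the consequential region $H=O(\log n)$, the shifts are $o(1/\sqrt{\log n})$, so by (28) the error ratio is $1+o(1)$. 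For $H\ge K\log n$ the error is $O(n^{-1})$ by a crude Gaussian bound, noting that $e_d$ is bounded by a power of $R$ times $e^{-R^2/8}$ by (37). Finally $H(x)\ge H_\psi$, and $e_d$ is decreasing in $R$; monotonicity follows from (37) up to constants, which suffices for \eqref{eq:40}.

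\emph{Ratio statement and main obstacle.} For (41) we need relative error $1+o(1)$, not merely constants. The main obstacle is uniformity in $k$ together with a mismatched radius. Unlike the finite fibre case, the composite test's error involves $Z_d$ and the affinity $M_d(k)$, so the perturbation must be pushed through the ratio $L_{\hat w,k}$ rather than a single Gaussian tail.

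We will first try to show continuity of $e_d(R,k)$ in $R$ in relative form, uniformly in $k$:
\[
e_d(R+\delta,k)/e_d(R,k)\to1\quad\text{whenever } \delta R\to0 .
\]
This should follow from \eqref{eq:35} and (36) for large $R$, and from joint continuity on compact $R$-ranges with $k$ handled through the bounded monotone limit $k\to\infty$. Then, conditional on $\hat x$, the test with parameter $\hat w$ applied to data of effective radius $\hat\mu$ has error within $1+o(1)$ of $e_d(\hat\mu,k)$, because the mismatch $\hat w-\hat\mu$ enters only through $\delta R=o(1)$ when $m\le C_0\log n$. Here $m\le C_0\log n$ ensures that $H=O(\log n)$ near the minimizer. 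The matching lower bound is exact, since it comes from the reduction to \eqref{eq:33}, which gives (41).
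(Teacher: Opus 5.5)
Your lower bounds match the paper's. The gap is in the upper bound, at the step you yourself flag as the main obstacle.

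You apply the equalizer $\varphi_{\hat w,k}$ to data whose actual radius is $\hat\mu\neq\hat w$. You propose to control this through relative continuity of the minimax value $R\mapsto e_d(R,k)$. Continuity of the minimax value, however, says nothing about the type-II error of the \emph{fixed} test $\varphi_{\hat w,k}$ under a misspecified radius. And (28) concerns a linear threshold for a simple Gaussian pair, not the composite likelihood-ratio region defined through $Z_d$.

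The missing ingredient is a perturbation bound that holds for every event, uniformly in $k$. One such bound is the Gaussian extremal-event inequality $Q(E)\le\Phi\{\Phi^{-1}(P(E))+\|h\|\}$ for a mean shift $h$, proved by a Neyman--Pearson half-space argument. Combined with the normal hazard bound, it gives the inflation factor $\exp\{C|\hat w-\hat\mu|(1+\hat w)\}$. Without it, your claim that the test ``has error within $1+o(1)$ of $e_d(\hat\mu,k)$'' is unsupported. This affects (40) as well as (41): even the constant-factor upper bound needs the composite test, because it must reproduce the calibration factor in $e_d$.

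Even with that inequality, the plug-in route has further holes in the ratio statement (41).

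\begin{itemize}
\item \textbf{Monotonicity.} The true $x$ may be far from the minimizer $x_*$, so $\hat w$ can be much larger than $R_0=\sqrt{H_\psi(n,m)}$. The null error of your test is exactly $e_d(\hat w,k)$. Monotonicity ``up to constants'' from (37) only gives $e_d(\hat w,k)\le Ce_d(R_0,k)$, which is useless for a ratio tending to one. You need exact monotonicity in $R$. It does hold, via the garbling $B\mapsto (R_1/R_2)B+\sqrt{1-R_1^2/R_2^2}\,Z$, since randomized tests are allowed in $e_d$; but you must prove and use it.
\item \textbf{Exceptional terms.} They must be $o(e_d(R_0,k))$, and $e_d(R_0,k)$ is only bounded below by $\Phi(-\sqrt{C_0\log n}/2)\ge c\,n^{-C_0/8}/(1+\sqrt{\log n})$. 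So the fixed $O(n^{-8})$ localization failure and the ``$O(n^{-1})$ for $H\ge K\log n$'' bound are insufficient for large $C_0$. Both constants must be chosen depending on $C_0$.
\item \textbf{The case $m=0$.} It needs separate treatment. There $H_\psi=0$, relative radius control fails near $x=0$, and $\hat w$ vanishes with positive probability because of clipping. A fair decision with error $1/2$ handles it.
\end{itemize}

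The paper sidesteps the monotonicity and large-radius issues by attenuation. Since $\|\hat v\|\ge R_0$ on every realization, it forms $\widetilde B=s\hat B+\sqrt{1-s^2}\,W$ with $s=R_0/\|\hat v\|$, where $W$ collects Helmert contrasts of the already-counted $X_i$. It then applies the single fixed equalizer $\varphi_{R_0,k}$. The null error is then exactly $e_d(R_0,k)$, and the alternative mean shift is at most $C\eta_nR_0$ whatever the true radius. The extremal-event inequality then gives the factor $\exp\{C\eta_nR_0(1+R_0)\}\to1$ uniformly in $k$ for $m\le C_0\log n$.
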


The exact lower bound fixes a minimizing \(x_*\) in \eqref{eq:39}, retaining both
labels and the complete sphere. Projection onto the now-known
source-weight vector gives \eqref{eq:33} plus independent ancillary observations.
Separately, at \(\zeta=0\), varying \(x\) leaves every detector and
calibration mean fixed; this proves the core floor in \eqref{eq:40}.

The upper bound requires more than substituting an estimated \(x\) into
\eqref{eq:35}. Estimate \(x\) from its independent scalar mean and define 
\[
v(x)=(\sqrt n\psi(x),\sqrt m(1-x)),\quad
\hat v=v(\hat x),\quad R_0=\sqrt{H_\psi(n,m)}.
\]
 For \(m\ge1\), the projected detector has conditional law 
\[
\hat B\sim N_d\left(\zeta\frac{\langle\hat v,v(x)\rangle}{\|\hat v\|}u,I_d\right).
\]
 Because \(\hat x\in[0,a]\), \(\|\hat v\|\ge R_0\) on every
realization. From \(n\ge d+1\) already counted scalar
observations,
form the independent standard Gaussian vector of Helmert contrasts 
\[
W_r=\frac{\sum_{i=1}^rX_i-rX_{r+1}}{\sqrt{r(r+1)}},\qquad r=1,\ldots,d.
\]
 It is independent of the scalar sample mean, detector means, and
calibration. With \(s=R_0/\|\hat v\|\), the attenuated vector 
\begin{equation}
\widetilde B=s\hat B+\sqrt{1-s^2}W \tag{42}
\end{equation}
 has conditional identity covariance and mean 
\[
\zeta R_0\frac{\langle\hat v,v(x)\rangle}{\|\hat v\|^2}u.
\]
 Thus attenuation reduces even a large true detector radius to a
controlled perturbation of the fixed least-favourable radius. It uses
neither extra observations nor an unknown-direction recentering.

Nonnegative-extension control gives relative radius error \(O(\eta_n)\),
uniformly in \(x,m\ge1\), where
\(\eta_n=n^{-1/4}\sqrt{\log n}+n^{-1/2}\log n\). For Gaussian measures
differing by a mean shift \(h\), every event satisfies 
\begin{equation}
Q(E)\le\Phi\{\Phi^{-1}(P(E))+\|h\|\}. \tag{43}
\end{equation}
 Applying this extremal-event inequality to the nominal equalizer
bounds its relative error inflation by \(\exp\{C\eta_nR_0(1+R_0)\}\),
independently of \(k\). Since \(H_\psi(n,m)\le m\), this factor tends to
one in the stated logarithmic range. A sufficiently small Gaussian
localization-tail probability remains negligible even after division by
the rare nominal error. The null detector law is unchanged exactly. For
larger \(m\), a resolver-only norm test has error \(O(n^{-1})\), proving
the remaining all-count upper bound. Zero resolver counts and bounded
primary counts are handled separately in Appendix \ref{app:s5}.

\subsection{Information windows and the insufficiency of scalar distance}

\begin{corollary}[calibrated regular-risk boundary]
\label{thm:calibrated-boundary}
Let
\(\ell=\log n\) and \(H=H_\psi(n,m)\). Regular target risk is
characterized by 
\begin{equation}
\frac H8+\frac12\log(1+H)
-\frac{d-1}{4}\log\left(1+\frac H{1+k}\right)
\ge\ell-O(1). \tag{44}
\end{equation}
 Uniformly over all calibration counts, its information boundary has
representative 
\begin{equation}
H=8\ell-4\log\ell+
2(d-1)\log\left(1+\frac{\ell}{1+k}\right)+O(1). \tag{45}
\label{eq:45}
\end{equation}

\end{corollary}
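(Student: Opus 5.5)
The corollary will follow from Theorem~\ref{thm:calibration-transfer} combined with the uniform two-sided bound (37) of Proposition~\ref{thm:calibrated-discrimination}. By (40), regular target risk $\mathcal R_d\asymp(1+n)^{-1}$ holds exactly when $e_d(\sqrt H,k)=O(n^{-1})$; the core floor is always present, so only the discrimination term matters. Substituting $R=\sqrt H$ into (37) gives, uniformly in $k$,
\[
e_d(\sqrt H,k)\asymp \exp\Bigl\{-\tfrac H8-\tfrac12\log(1+H)+\tfrac{d-1}{4}\log\Bigl(1+\tfrac H{1+k}\Bigr)\Bigr\},
\]
because $(1+\sqrt H)^{-1}\asymp(1+H)^{-1/2}$. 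Taking logarithms, the condition $e_d=O(n^{-1})$ is the statement that the negated exponent is at least $\ell-O(1)$. That is precisely (44), with the $O(1)$ absorbing the comparison constants of (37) and (40).

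For (45), write $g_k(H)$ for the left side of (44). I would first check that $g_k$ is increasing with derivative bounded above and below once $H$ exceeds a fixed constant. Its derivative is $\tfrac18+\tfrac1{2(1+H)}-\tfrac{d-1}{4}\cdot\tfrac{1}{1+k+H}$. For $H\ge 4(d-1)$, the last term is at most $\tfrac1{16}$, so $g_k'\in[\tfrac1{16},1]$ uniformly in $k$. Consequently the threshold set $\{g_k(H)\ge\ell-O(1)\}$ is, up to an additive $O(1)$ in $H$, the half-line above the solution of $g_k(H)=\ell$. It therefore suffices to show that the displayed $H^\star$ in (45) satisfies $g_k(H^\star)=\ell+O(1)$ uniformly in $k$.

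To verify this, substitute $H^\star=8\ell-4\log\ell+2(d-1)\log(1+\ell/(1+k))+c$. The first term contributes $\ell-\tfrac12\log\ell+\tfrac{d-1}{4}\log(1+\ell/(1+k))+c/8$. The term $\tfrac12\log(1+H^\star)$ equals $\tfrac12\log\ell+O(1)$, since $H^\star\asymp\ell$; the correction satisfies $2(d-1)\log(1+\ell)=o(\ell)$. The key estimate is then
\[
\log\Bigl(1+\tfrac{H^\star}{1+k}\Bigr)=\log\Bigl(1+\tfrac{\ell}{1+k}\Bigr)+O(1)
\]
uniformly in $k$, which holds because $\ell\le H^\star\le 9\ell$ for large $\ell$. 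The cancellations then leave $g_k(H^\star)=\ell+O(1)$, as required.

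The main obstacle is uniformity over all $k$ and all $(n,m)$. In particular, the implicit $O(1)$ terms must not depend on $k$; this rests on the uniformity asserted in (37) and (40). One must also handle small $\ell$ and bounded $H$, which are absorbed into the constants. Finally, the monotonicity argument above requires $H$ large, and the range $H\le 4(d-1)$ should be dispatched directly, since there $g_k=O(1)$.
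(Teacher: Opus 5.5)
Your proposal is correct and takes the paper's route (Appendix S5.8). There, the all-information comparison (37) at $R=\sqrt H$, combined with (40), gives (44). The representative is then substituted, and the uniform positive bounds on the $H$-derivative of the left side turn the bounded discrepancy into an $O(1)$ shift in $H$. Your explicit threshold $H\ge4(d-1)$ and the sandwich $\ell\le H^\star\le 9\ell$ make quantitative the ``sufficiently large $H$'' and ``uniformly in $k$'' steps, which the paper states more briefly.
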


\begin{figure}
\centering
\includegraphics[width=\linewidth]{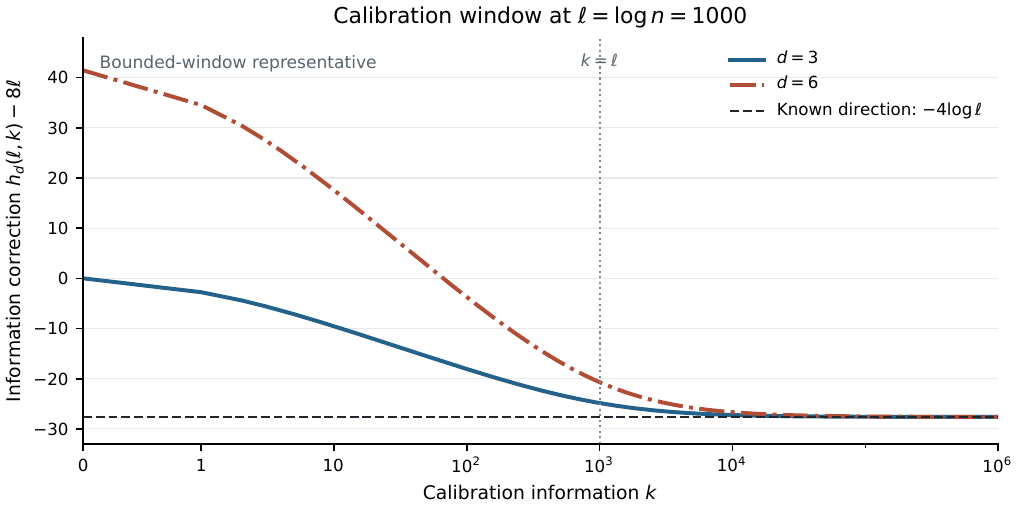}
\caption{Calibration changes the logarithmic information correction. The
representative in \eqref{eq:45}, minus \(8\ell\), is evaluated at
\(\ell=\log n=1000\) for \(d=3,6\) and integer calibration counts. The
dashed horizontal line is the known-direction correction \(-4\log\ell\);
the dotted vertical line marks \(k=\ell\). The axis is linear from zero
to one and logarithmic thereafter. The information boundary is specified
only up to an additive bounded term, so this is a formula-derived
representative rather than an exact finite-sample risk curve.}
\end{figure}

For \(k\asymp\ell^\rho\), \(0\le\rho\le1\), the numerator becomes
\(8\ell+\{2(d-1)(1-\rho)-4\}\log\ell\). To convert this information into
resolver count, one must invert the shared contact geometry: 
\begin{equation}
M_h(n)=\sup_{0\le x\le a}
\frac{h-n\psi(x)^2}{(1-x)^2}. \tag{46}
\end{equation}
 This is the exact continuous count needed for \(H_\psi\ge h\);
rounding changes it by at most one. A bounded information error changes
the count by only a bounded amount: as a function of \(m\),
\(H_\psi(n,m)\) has secant slopes in \([(1-a)^2,1]\), while its inverse
\(M_h(n)\), as a function of \(h\), has secant slopes in
\([1,(1-a)^{-2}]\). For the flat function \eqref{eq:14}, the count boundary is
\eqref{eq:45} divided by \((1-r_\ell)^2\), up to \(O(1)\). Calibration and
contact corrections therefore enter a common inversion.

For every \(k\), the minimum cross-label squared mean distance in \eqref{eq:33}
is \(R^2\). For positive calibration, both the null and alternative mean
orbits have dimension \(d-1\). Comparing \(k=1\) with \(k=R^2\)
therefore preserves both orbit dimensions, yet for fixed \(d\ge2\), 
\begin{equation}
\frac{e_d(R,1)}{e_d(R,R^2)}\asymp R^{(d-1)/2}\to\infty. \tag{47}
\end{equation}
 Minimum distance together with orbit dimension therefore does not
determine the risk comparison. This does not equate complete labelled
distance arrays, which do change with calibration. It explains why the
finite-constellation theorem and the continuous-orbit theorem require
different hypotheses.

\section{Scope and implications for fusion design}
\label{sec:main-9}

The results are mathematical statements about the declared Gaussian
experiments. Their validity follows from the proofs in the article and
the appendices. The figures illustrate the stated response
geometry and proved formulas; they do not supply empirical premises for
the theorems.

The risk decomposition gives a direct design rule. First identify which
source supplies an estimation direction that the other sources cannot
recover. Its observation count imposes an independent variance floor.
Next identify the competing common-parameter configurations and their
target differences. Additional observations are valuable when they
suppress the target-weighted discrimination term at the intended loss
scale. Finally, account for any continuous shared nuisance: alignment
information can affect discrimination beyond what a scalar separation
records.

The constructive results retain this distinction at finite precision.
Theorem~\ref{thm:certified-inference} gives sufficient accuracy for nuisance fitting and source
evaluation at the rare-error scale. Theorem~\ref{thm:tangency-repair} uses an explicit
shape-and-sign estimator and has a separate order-preservation
requirement. Proposition~\ref{thm:calibrated-discrimination} and Theorem~\ref{thm:calibration-transfer} establish the exact invariant
testing object and its statistical transfer; they do not require a
general-purpose numerical equalizer as a premise. Thus each statistical
assertion has a complete mathematical justification at its stated
precision.

Known covariance, fixed source functions, fixed dimension and
deterministic counts are substantive assumptions. Unknown covariance,
growing dimension, drifting source directions, correlated observations
and misspecified response maps require additional analysis. The
constants in the finite-fibre theorem are not uniform under collapsing
branch angles. The tangency repair is uniform in the stated known gain,
including zero, but does not cover an unknown gain. These boundaries
identify the conditions under which the acquisition laws can be used.

The combined results distinguish three sources of difficulty: local
estimation within a branch, discrimination between branches with
different targets, and alignment of a continuous shared nuisance. Exact
fibres determine
identification,
while the rates of approximate agreement determine the observation
resources required for accurate estimation. This distinction remains
consequential even when every local Fisher information matrix is
positive definite and the target is globally identified.

\subsection*{Research tools}

OpenAI ChatGPT/Codex was used for mathematical reconstruction, candidate
derivations, counterexample search, literature retrieval, proof development
and checking, auxiliary numerical verification, preparation of the
mathematical figures, and manuscript organization, drafting, editing and
reference checking. The authors are responsible for all content. The figures
evaluate the stated maps and formulas and contain no generated experimental
observations.

\section{Conclusion}
\label{sec:main-10}

Global fusion risk depends on how the source and target differences of
common-parameter alternatives vanish across distinct local regions. For
finite nuisance fibres with pairwise nonparallel branch tangents, this
dependence admits an all-count target-weighted Gaussian-tail
characterization with a matching measurable estimator. The full-box
specialization shows that local Fisher regularity, target-identifying
fibres, and complete critical jets can still leave the global rate
undetermined. Certified finite inference and source-cost analysis make
that characterization operational at a declared computational and
asymptotic scope. At a polynomial tangency boundary, the auxiliary
precision and target vanishing order give distinct improvement and
regular-rate thresholds, uniformly through zero known gain. The
shared-direction extension identifies a complementary phenomenon:
calibration changes composite discrimination through nuisance alignment
even when scalar minimum distance remains fixed. Together, these results
support rigorous acquisition decisions while exposing the exact
geometric conditions needed for their validity.

\subsection*{Funding}

This research received no funding.

\subsection*{Declaration of competing interests}

The authors declare no competing interests.

\subsection*{Data availability}

This is a theoretical study whose results are established by the proofs
in the article and the appendices. Additional supporting
materials are available from the corresponding author upon reasonable
request.

\clearpage
\appendix
\renewcommand{\thesection}{S\arabic{section}}
\renewcommand{\theHsection}{appendix.\arabic{section}}
\numberwithin{theorem}{section}
\renewcommand{\theHtheorem}{\theHsection.\arabic{theorem}}
\section*{Appendices: complete proofs}
\addcontentsline{toc}{section}{Appendices: complete proofs}

These appendices give the complete experiments and proofs supporting the
main text. Gaussian coordinates are real, independent and unit variance
unless explicitly stated otherwise. Counts refer to repeated source blocks.
Each comparison or asymptotic retains the precision and uniformity specified
in its statement. References are shared with the main text and collected at
the end of this document.

\begin{center}
\small
\begin{tabular}{@{}ll@{}}
\toprule
Main result & Complete proof \\
\midrule
Theorem~\ref{thm:global-risk}: global finite-fibre risk & \ref{app:s1-1} \\
Corollary~\ref{thm:full-box-risk}: full-box specialization & \ref{app:s1-2} and \ref{app:s2-1} \\
Theorem~\ref{thm:critical-jets}: critical jets and acquisition window & \ref{app:s2-2}--\ref{app:s2-3} and \ref{app:s2-5} \\
Theorem~\ref{thm:tangency-boundary}: polynomial tangency boundary & \ref{app:s1-3} \\
Theorem~\ref{thm:tangency-repair}: gain-uniform tangency repair & \ref{app:s7-1}--\ref{app:s7-6} \\
Theorem~\ref{thm:certified-inference}: certified estimator & \ref{app:s3-1}--\ref{app:s3-7} \\
Theorem~\ref{thm:cost-allocation}: cost-constrained acquisition & \ref{app:s6-1}--\ref{app:s6-4} \\
Proposition~\ref{thm:calibrated-discrimination}: calibrated fixed-radius testing & \ref{app:s4-1}--\ref{app:s4-4} \\
Theorem~\ref{thm:calibration-transfer}: unknown-coordinate transfer & \ref{app:s5-1}--\ref{app:s5-7} \\
Corollary~\ref{thm:calibrated-boundary}: calibrated acquisition boundary & \ref{app:s5-8} \\
\bottomrule
\end{tabular}
\end{center}

\clearpage
\section{Finite fibres with nonparallel branch tangents and target risk}
\label{app:s1}

\subsection{Primitive model and exact all-count risk law}
\label{app:s1-1}

Let \(J\subset\mathbb R\) and \(I\subset\mathbb R\) be compact intervals
with nonempty interiors. Let \(C:I\to\mathbb R^D\) be the restriction of
a fixed \(C^2\) map defined on a neighborhood of \(I\). Assume:

\begin{enumerate}
\setlength{\itemsep}{0pt}
\item
  \(C'(b)\ne0\) for every \(b\in I\).
\item
  There are only finitely many image points \(z_h\), \(1\le h\le H_0\),
  with multiple preimages. Write their complete fibres as 
\[
B_h=C^{-1}(z_h)=\{\beta_{h,1},\ldots,\beta_{h,M_h}\},\qquad M_h\ge2.
\]
 Every other fibre is a singleton.
\item
  For each \(h\) and each \(i\ne l\), the two vectors
  \(C'(\beta_{h,i})\) and \(C'(\beta_{h,l})\) are linearly independent.
\end{enumerate}

These are
deterministic,
count-independent conditions on a curve. They neither assert an
estimator bound nor encode such a bound. They allow asymmetric double
points and multiple points with more than two preimages. In dimension
two, a multiple point may have any fixed finite number of pairwise
distinct tangent lines. Endpoints of \(I\) are permitted among the
preimages.

Let \(q:I\to\mathbb R\) and \(\chi:J\to\mathbb R\) be fixed \(C^1\)
functions. Fix \(T_*>0\) such that 
\begin{equation}
\frac12\operatorname{diam}(q(I))\,\|\chi\|_\infty\le T_*.
\tag{S1.1.1}
\label{eq:s1-1-1}
\end{equation}
 The common parameter is \((r,b,t)\in J\times I\times[-T_*,T_*]\), the
target is the ordinary coordinate \(t\), and the loss is squared target
error. The primary source, with count \(n=n_0\), has mean 
\begin{equation}
F(r,b,t)=\bigl(r,\ t-q(b)\chi(r),\ C(b),\ b a_0(r)\bigr).
\tag{S1.1.2}
\label{eq:s1-1-2}
\end{equation}
 There are finitely many further sources, indexed by \(j=1,\ldots,s\),
with counts \(n_j\). Source \(j\) has either mean 
\begin{equation}
G_j(r,b,t)=(r,b a_j(r))\quad\text{or}\quad G_j(r,b,t)=b a_j(r).
\tag{S1.1.3}
\label{eq:s1-1-3}
\end{equation}
 Let \(\mathcal L\subset\{0,\ldots,s\}\) denote the sources that
include the radius coordinate \(r\); thus \(0\in\mathcal L\). Every
\(a_j\) is fixed and \(C^2\) on a neighborhood of \(J\). For
\(j\in\mathcal L\), assume that \(a_j\) is nonnegative on that
neighborhood. For \(j\notin\mathcal L\), assume \(a_j(r)\ge c_j>0\)
there. A known zero primary amplitude is allowed; its pure-noise
coordinate can be omitted without changing the experiment.

All repetitions and sources share the same \((r,b,t)\). Their blocks and
coordinates are independent. Define 
\[
N_L=\sum_{j\in\mathcal L}n_j,\qquad
H_{\boldsymbol n}(r)=\sum_{j=0}^s n_j a_j(r)^2,
\]
 
\[
d_{h,il}=\frac{|\beta_{h,i}-\beta_{h,l}|}{2},\qquad
w_{h,il}=\frac{|q(\beta_{h,i})-q(\beta_{h,l})|^2}{4},
\]
 
\begin{equation}
\mathcal S(\boldsymbol n)
=\max_{h,i<l}\ \sup_{r\in J}
 w_{h,il}\chi(r)^2
 \Phi\bigl(-d_{h,il}\sqrt{H_{\boldsymbol n}(r)}\bigr),
\tag{S1.1.4}
\label{eq:s1-1-4}
\end{equation}
 with maximum over an empty set equal to zero. Here \(\Phi\) is the
standard normal distribution function. The profile retains the actual
source and target gaps of the same parameter pair; there is no
sourcewise nuisance minimization.

\begin{theorem}
Under these hypotheses, the minimax target risk
satisfies 
\begin{equation}
\quad
\mathcal R(\boldsymbol n)\asymp \frac1{1+n}+\mathcal S(\boldsymbol n)
\quad
\tag{S1.1.5}
\label{eq:s1-1-5}
\end{equation}
 uniformly over all nonnegative integer counts. Constants may depend
on the fixed maps, intervals, source number, and curve geometry, but not
on the counts. In particular, constants are not claimed uniform as two
distinct endpoint target values merge or as branch angles vanish. This
is a comparison theorem, not an exact multiplicative minimax constant.
\end{theorem}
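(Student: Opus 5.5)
The proof splits into a lower bound and an upper bound. Both rest on the deterministic all-pairs alternative \eqref{eq:6}--\eqref{eq:7}, which I prove first.

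\emph{Geometric lemma.} For $\|C(b)-C(c)\|\le\varepsilon\le\varepsilon_0$, either $|b-c|\le L\varepsilon$ or $|b-\beta_{h,i}|+|c-\beta_{h,l}|\le L\varepsilon$ for some listed pair. I would cover $I\times I$ by three kinds of sets.
\begin{itemize}
\item A strip around the diagonal. There $C(b)-C(c)=\int_0^1C'(c+s(b-c))\,ds\,(b-c)$, and uniform continuity of $C'\neq0$ gives $\|C(b)-C(c)\|\ge c_0|b-c|$.
\item Small boxes around each $(\beta_{h,i},\beta_{h,l})$. The map $(b,c)\mapsto C(b)-C(c)$ has injective derivative $[C'(\beta_{h,i}),-C'(\beta_{h,l})]$, with least singular value $\sigma>0$. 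A $C^2$ Taylor remainder then yields $\|C(b)-C(c)\|\ge(\sigma/2)(|b-\beta_{h,i}|+|c-\beta_{h,l}|)$ on the box, including one-sided boxes at endpoints.
\item The compact complement. It contains no zeros of $C(b)-C(c)$, because all double points lie in the boxes. So $\|C(b)-C(c)\|$ is bounded below there, and choosing $\varepsilon_0$ below that bound finishes the lemma.
\end{itemize}

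\emph{Lower bound.} The floor $(1+n)^{-1}$ comes from two-point Le Cam with $b$ and $r$ fixed and $t=\pm c(1+n)^{-1/2}$. Only the shifted-target coordinate of the primary source moves, so the KL divergence is bounded and auxiliary counts are irrelevant. For the profile term, fix $r$ and a pair $(h,i,l)$ and use the two alternatives of the main text. They lie in the box by \eqref{eq:s1-1-1}. They agree in the $r$, shifted-target and curve coordinates, and the whitened amplitude means differ by $2d_{h,il}\sqrt{H_{\boldsymbol n}(r)}$ in norm. The equal-prior Bayes test error is therefore exactly $\Phi(-d_{h,il}\sqrt{H_{\boldsymbol n}(r)})$. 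The targets differ by $2\sqrt{w_{h,il}}|\chi(r)|$, so the nearest-target reduction gives risk $\ge w_{h,il}\chi(r)^2\Phi(\cdot)$. Taking the supremum over $r$ and the maximum over pairs completes the lower bound.

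\emph{Upper bound.} Without loss take $n\ge n_1$; for bounded $n$ the trivial estimator $0$ has risk $\le T_*^2\lesssim(1+n)^{-1}$. The estimator has three stages, each operating on the independent Gaussian means.
\begin{enumerate}
\item Form $\hat r$ by clipping the pooled radius mean. Compute $\hat b_0\in\arg\min_b\|\bar C-C(b)\|$. Since $\|C(\hat b_0)-C(b)\|\le2\|\xi\|$ with $\|\xi\|^2\sim D/n$, the lemma shows that either $|\hat b_0-b|\lesssim\|\xi\|$, or $b$ lies within $O(\|\xi\|)$ of a fibre point while $\hat b_0$ lies near another point of the same fibre.
\item If $\hat b_0$ is within a screening radius $\rho_n\asymp\sqrt{\log n/n}$ of some multiple fibre $B_h$, perform finite branch selection. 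Choose the $\beta_{h,l}$ minimising $\|Y-\beta_{h,l}\hat v\|$, then refit the curve over the union of neighbourhoods of all $\beta_{h,l'}$ with the same value of $q$. Output $\hat t=\bar X+q(\hat b)\chi(\hat r)$, clipped to $[-T_*,T_*]$.
\item On a correct class, the lemma together with $C^1$ smoothness of $q$ and $\chi$ gives error $O(\|\xi\|+|\hat r-r|+|\bar X-\text{mean}|)$. Because $N_L\ge n$, this has squared order $n^{-1}$. On a wrong class the loss is at most $C w_{h,il}\chi(r)^2$ plus $O(n^{-1})$ terms, because $q(b)\chi(r)$ is within $O(\rho_n)$ of its fibre value.
\end{enumerate}

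The main obstacle is showing that the wrong-class probability is at most $C\,\Phi(-d\sqrt{H})+O(n^{-1})$ uniformly in all other counts, even though $\hat v$ is estimated and the candidates $\beta\hat v$ are offset from the truth $b v$ by $O(\rho_n\sqrt H)$. My plan is the following.
\begin{itemize}
\item Condition on the radius data, which is independent of $Y$.
\item Use the Glaeser-type bound $|a(s)-a(r)|\le C\sqrt{a(r)}|s-r|+C|s-r|^2$ for nonnegative $C^2$ functions to get \eqref{eq:8} on an event of probability $1-O(n^{-8})$.
\item Deduce the bisector margin \eqref{eq:9} via the projection inequality $(\hat v\cdot v)^2/\|\hat v\|^2\ge H-E$.
\item Split at $H=K\log n$. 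For $H\le K\log n$ the perturbations change the squared tail argument by $o(1)$, so the Mills comparison (10) keeps the tail within a constant factor. For $H>K\log n$ with $K$ large, the tail is $\le n^{-2}$.
\item Handle $H=0$ (no informative score, so default to any branch, where the profile term is already of order $w\chi^2$) and endpoint charts separately.
\end{itemize}
Finally, take a union over the finitely many pairs to obtain $\mathcal R\lesssim(1+n)^{-1}+\mathcal S(\boldsymbol n)$.
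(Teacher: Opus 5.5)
Your proposal is correct and follows the paper's proof essentially step for step. It uses the same three-region derivation of the all-pairs lemma, the same actual-pair and core-floor lower bounds, and the same estimator: independent radius localization, screened curve fit, nearest-candidate branch selection with $\hat v$, and target-class refit, analysed through \eqref{eq:8}, the projection inequality and the split at $H=K\log n$. Two small points need tightening. First, the degenerate case to handle is $\hat v=0$ rather than $H=0$; there \eqref{eq:8} with $E=H$ forces $H=o(1)$, so the trivial probability bound is already $\lesssim\Phi(-d\sqrt H)$. Second, the constant in $\rho_n$ must exceed a fixed multiple of $L$ times the good-event noise radius, so that a trusted fit cannot fall under alternative \eqref{eq:7}.
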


\subsubsection*{A. Quantitative all-pairs localization derived from the primitive hypotheses}
\addcontentsline{toc}{subsubsection}{A. Quantitative all-pairs localization derived from the primitive hypotheses}

There exist \(\varepsilon_0,L_0>0\) such that, whenever
\(\|C(b)-C(c)\|\le\varepsilon\le\varepsilon_0\), either 
\begin{equation}
|b-c|\le L_0\varepsilon,
\tag{S1.1.6}
\label{eq:s1-1-6}
\end{equation}
 or, for a common fibre \(B_h\) and distinct indices \(i,l\), 
\begin{equation}
|b-\beta_{h,i}|+|c-\beta_{h,l}|\le L_0\varepsilon.
\tag{S1.1.7}
\label{eq:s1-1-7}
\end{equation}

To prove this, first use \(\inf_I\|C'\|>0\), uniform continuity of
\(C'\), and Taylor's formula to obtain a uniform local lower bound
\(\|C(b)-C(c)\|\ge c_0|b-c|\) for \(|b-c|\) sufficiently small. Near a
distinct equality pair \((\beta_{h,i},\beta_{h,l})\), the derivative of
\((x,y)\mapsto C(x)-C(y)\) has two independent columns. If its least
singular value is \(\sigma>0\), Taylor's formula gives 
\[
\|C(\beta_{h,i}+u)-C(\beta_{h,l}+v)\|
\ge \sigma\sqrt{u^2+v^2}-M(u^2+v^2)
\ge (\sigma/2)\sqrt{u^2+v^2}
\]
 in a sufficiently small fixed rectangle. There are only finitely many
such rectangles. Outside their union and a fixed neighborhood of the
diagonal, compactness gives a strictly positive minimum of
\(\|C(b)-C(c)\|\), because all its zeros have already been listed.
Shrink \(\varepsilon_0\), and take the maximum of finitely many inverse
constants. This proves
\eqref{eq:s1-1-6}--\eqref{eq:s1-1-7},
also at interval endpoints by one-sided restriction.

Choose fixed, pairwise disjoint closed interval charts
\(I_{h,i}\subset I\) around all \(\beta_{h,i}\), so small that \(C\) is
uniformly bi-Lipschitz on each chart and a point sufficiently close to
\(\beta_{h,i}\) belongs to its chart. The chart is one-sided when
necessary.

\subsubsection*{B. Actual lower-bound configurations}
\addcontentsline{toc}{subsubsection}{B. Actual lower-bound configurations}

The conversion from testing error to squared estimation risk is the standard
two-point reduction; see \cite[Chapter~2]{Tsybakov2009}. The alternatives below
retain the common parameter across every source.

Fix \(h,i,l,r\), put \(\bar q=(q(\beta_{h,i})+q(\beta_{h,l}))/2\), and
compare 
\[
\theta_i=(r,\beta_{h,i},[q(\beta_{h,i})-\bar q]\chi(r)),
\quad
\theta_l=(r,\beta_{h,l},[q(\beta_{h,l})-\bar q]\chi(r)).
\]
 Condition \eqref{eq:s1-1-1} places both points in the declared full box. Their
shifted target coordinates both equal \(-\bar q\chi(r)\), their curve
coordinates agree, and every observed radius agrees. Their complete
squared Gaussian mean distance is 
\[
4d_{h,il}^2 H_{\boldsymbol n}(r).
\]
 Thus their exact equal-prior binary testing error is
\(\Phi(-d_{h,il}\sqrt H)\). Nearest-target classification turns every
target estimator into a test and gives risk at least
\(w_{h,il}\chi(r)^2\Phi(-d_{h,il}\sqrt H)\). Taking the supremum yields
\(\mathcal R\ge\mathcal S\).

For the separate floor, fix any \((r,b)\) and compare
\(t=\pm c(1+n)^{-1/2}\), with fixed \(0<c<T_*\). Only the shifted target
coordinate of the primary source changes. The Gaussian Kullback--Leibler
divergence is \(2nc^2/(1+n)\), bounded independently of every other
count, while target separation squared is \(4c^2/(1+n)\). Hence
\(\mathcal R\ge c'/(1+n)\). The maximum of these two bounds is at least
half their sum.

\subsubsection*{C. Fully specified measurable estimator}
\addcontentsline{toc}{subsubsection}{C. Fully specified measurable estimator}

For bounded \(n\), including zero, return zero; bounded loss and the
floor handle all auxiliary allocations. For sufficiently large \(n\),
pool exactly the observed radius coordinates, clip to \(J\), and obtain

\[
\hat r=\operatorname{clip}_{J}\bar R,\qquad
\bar R\sim N(r,N_L^{-1}).
\]
 Denote the shifted target and curve means by \(\bar X\) and
\(\bar C\). The radius noise is independent of these means and of every
amplitude mean. Fix dense sequences in \(I\) and in every finite union
of the fixed charts. Every minimization below selects the first sequence
element whose Euclidean residual is within \(1/n\) of the infimum.
Compactness and continuity identify this infimum with the compact
minimum, and a countable first-index rule is measurable.

First fit \(\widetilde b\) to \(\bar C\) over all of \(I\). Then 
\begin{equation}
\|C(\widetilde b)-C(b)\|
\le 2\|\bar C-C(b)\|+1/n.
\tag{S1.1.8}
\label{eq:s1-1-8}
\end{equation}
 Set \(\delta_n=A\sqrt{\log n/n}\), where \(A\) is a sufficiently
large fixed constant. If \(\widetilde b\) is farther than
\(A_1\delta_n\) from every multiple-fibre preimage, trust this fit and
set \(\widehat b=\widetilde b\). Choose \(A_1>3L_0\).

Otherwise, for all sufficiently large \(n\), the nearby preimage belongs
to a uniquely determined fibre \(B_h\); deterministic smallest-index
conventions resolve the irrelevant exceptional ties. Form 
\[
v=(\sqrt{n_j}a_j(r))_{n_j>0},\qquad
\widehat v=(\sqrt{n_j}a_j(\hat r))_{n_j>0},
\]
 and let \(Y\) collect the corresponding whitened amplitude means.
Conditional on \(\hat r\), 
\[
Y\sim N(bv,I).
\]
 Select an index \(\ell\) minimizing the finite set 
\begin{equation}
\|Y-\beta_{h,\ell}\widehat v\|^2,
\tag{S1.1.9}
\label{eq:s1-1-9}
\end{equation}
 with smallest-index ties. If \(\widehat v=0\), use the same
deterministic convention. Next, let 
\begin{equation}
\mathcal I_{h,\ell}
=\bigcup_{k:\ q(\beta_{h,k})=q(\beta_{h,\ell})} I_{h,k}.
\tag{S1.1.10}
\label{eq:s1-1-10}
\end{equation}
 Refit \(C\) to \(\bar C\) over this finite union and call the result
\(\widehat b\). Finally return 
\begin{equation}
\widehat t=\operatorname{clip}_{[-T_*,T_*]}
 \{\bar X+q(\widehat b)\chi(\hat r)\}.
\tag{S1.1.11}
\end{equation}
 The target-value grouping \eqref{eq:s1-1-10} is essential: an incorrect
nuisance-branch label with the same endpoint target value must not
introduce a screening-radius loss.

\subsubsection*{D. Actual-error refitting and target loss}
\addcontentsline{toc}{subsubsection}{D. Actual-error refitting and target loss}

Let \(\mathcal E_C=\{\|\bar C-C(b)\|\le\delta_n\}\). By a coordinatewise
Gaussian tail bound and a sufficiently large \(A\),
\(\Pr(\mathcal E_C^c)\le 2D n^{-8}\). On \(\mathcal E_C\), a trusted fit
must satisfy \eqref{eq:s1-1-6}, because \eqref{eq:s1-1-7} would place it within
\(3L_0\delta_n\) of a listed preimage. Its squared parameter error is
therefore bounded by a constant times \((2\|\bar C-C(b)\|+1/n)^2\),
whose expectation is \(O(n^{-1})\).

If a fibre \(B_h\) is selected, the all-pairs lemma shows that the true
\(b\) is within \(C\delta_n\) of one uniquely determined
\(\beta_{h,i}\), and hence lies in its fixed chart. Suppose the selected
target class is correct, meaning \(q(\beta_{h,\ell})=q(\beta_{h,i})\).
The refitting domain then contains the true \(b\), so its curve
discrepancy again obeys \eqref{eq:s1-1-8}. The all-pairs lemma implies either an
ordinary \(O(\|\bar C-C(b)\|+1/n)\) parameter error, or closeness at
that same actual-noise scale to two endpoints with equal \(q\) values.
Since \(q\) is Lipschitz, in either case 
\begin{equation}
|q(\widehat b)-q(b)|
\le C\{\|\bar C-C(b)\|+1/n\}.
\tag{S1.1.12}
\end{equation}
 Thus no \(\log n/n\) penalty is introduced even when several branches
share their endpoint target value.

On an incorrect target class, the selected local fit lies within
\(C\delta_n\) of a preimage in that class. Indeed the corresponding
endpoint is feasible and has curve value \(z_h\), while the true curve
value is within \(C\delta_n\) of \(z_h\). The local inverse bound gives
this assertion. Consequently 
\[
|q(\widehat b)-q(b)|
\le |q(\beta_{h,\ell})-q(\beta_{h,i})|+C\delta_n.
\]
 Among the fixed, finite, nonzero endpoint target gaps there is a
positive minimum. For sufficiently large \(n\), the last expression is
bounded by a fixed multiple of the displayed endpoint gap. If there are
no nonzero gaps, this incorrect-class case is empty.

Using boundedness and Lipschitz continuity of \(\chi\) and \(q\),
clipping, the variance of \(\bar X\), and
\(\mathbb E|\hat r-r|^2\le1/N_L\le1/n\), the target risk is at most 
\begin{equation}
C/n+C\chi(r)^2\sum_{h,i,\ell:\ q(\beta_{h,i})\ne q(\beta_{h,\ell})}
 w_{h,i\ell}\Pr\{\text{select \(\ell\) from true branch \(i\), good events}\}.
\tag{S1.1.13}
\label{eq:s1-1-13}
\end{equation}
 The finite sum and bounded exceptional-event loss change only
constants.

\subsubsection*{E. Independent localization preserves the sharp Gaussian tail}
\addcontentsline{toc}{subsubsection}{E. Independent localization preserves the sharp Gaussian tail}

A fixed nonnegative \(C^2\) extension gives the Glaeser estimate \cite{Glaeser1963}
\(|a_j'|^2\le C a_j\) on \(J\) for \(j\in\mathcal L\). Taylor's theorem
therefore gives 
\[
|a_j(\hat r)-a_j(r)|
\le C\sqrt{a_j(r)}|\hat r-r|+C|\hat r-r|^2.
\]
 For \(j\notin\mathcal L\), the positive lower bound and bounded
derivative instead give
\(|a_j(\hat r)-a_j(r)|^2\le C a_j(r)^2|\hat r-r|^2\). On the independent
radius event 
\[
\mathcal E_R=\{|\hat r-r|\le A\sqrt{\log n/N_L}\},
\]
 whose complement has probability \(O(n^{-8})\) after increasing
\(A\), put \(E=\|\widehat v-v\|^2\). Cauchy--Schwarz over the localizing
sources yields 
\begin{equation}
E\le C\left\{
\frac{\log n}{\sqrt{N_L}}\sqrt H
+\frac{\log^2 n}{N_L}
+\frac{H\log n}{N_L}
\right\}.
\tag{S1.1.14}
\label{eq:s1-1-14}
\end{equation}
 Here \(H=H_{\boldsymbol n}(r)\). This estimate is uniform in every
auxiliary count, including counts much larger than the primary count.

For a true branch \(i\) and selected competitor \(\ell\), write
\(d=|\beta_{h,i}-\beta_{h,\ell}|/2>0\) and
\(c=(\beta_{h,i}+\beta_{h,\ell})/2\). Selection of \(\ell\) in \eqref{eq:s1-1-9}
implies crossing the pairwise Gaussian bisector between these two
candidate means. The midpoint \(c\) need not vanish. On the good curve
event the true amplitude is within \(C\delta_n\) of \(\beta_{h,i}\). If
\(\widehat v\ne0\), the standardized signed distance to the wrong
bisector is at least 
\begin{equation}
\mu\ge(d-C\delta_n)
\frac{\widehat v\cdot v}{\|\widehat v\|}
-|c|\sqrt E.
\tag{S1.1.15}
\label{eq:s1-1-15}
\end{equation}
 Every amplitude component is nonnegative, so the projection is
nonnegative. Orthogonal projection gives 
\begin{equation}
\frac{(\widehat v\cdot v)^2}{\|\widehat v\|^2}\ge H-E.
\tag{S1.1.16}
\label{eq:s1-1-16}
\end{equation}
 Together,
\eqref{eq:s1-1-15}--\eqref{eq:s1-1-16}
imply 
\begin{equation}
\mu\ge d\sqrt H-C\delta_n\sqrt H-C\sqrt E.
\tag{S1.1.17}
\label{eq:s1-1-17}
\end{equation}
 A harmless enlargement of \(C\) makes this valid also when \(E>H\),
because the right side can then be made nonpositive while \eqref{eq:s1-1-15}
remains at least \(-C\sqrt E\).

For every fixed \(K<\infty\), \eqref{eq:s1-1-14} implies, uniformly where
\(H\le K\log n\), 
\[
\delta_n H+\sqrt{HE}+E=o(1).
\]
 Hence the squared positive discrimination level differs from \(d^2H\)
by at most \(o(1)\); if \(H\) is itself tiny, the desired tail is
bounded below by a positive constant and no sign restriction is needed.
The Mills comparison 
\[
\Phi(-\sqrt x)\asymp (1+x)^{-1/2}e^{-x/2},\qquad x\ge0,
\]
 shows that a bounded additive squared-argument perturbation changes
the tail by at most a fixed factor. Conditional on the independent
radius and curve data, the wrong-bisector probability is therefore at
most 
\begin{equation}
C\Phi(-d\sqrt H),\qquad H\le K\log n.
\tag{S1.1.18}
\label{eq:s1-1-18}
\end{equation}
 If \(\widehat v=0\), then \(H=E\); in this low-information range
\eqref{eq:s1-1-14} forces \(H=o(1)\), so the same bound follows from probability
at most one.

For \(H>K\log n\), \eqref{eq:s1-1-14} gives \(E/H=o(1)\) uniformly because
\(N_L\ge n\), and \eqref{eq:s1-1-17} gives \(\mu\ge d\sqrt H/2\) for sufficiently
large \(n\). Choose \(K\) using the smallest positive, fixed \(d\) among
the finitely many pairs. Then the wrong-bisector probability is at most
\(\exp(-d^2H/8)\le n^{-4}\). Thus this whole region is absorbed by the
regular floor. Combining \eqref{eq:s1-1-13}, \eqref{eq:s1-1-18}, the finite pair count,
and the lower bounds proves \eqref{eq:s1-1-5}.

\subsection{Exact scope, specializations, and limits}
\label{app:s1-2}

\subsubsection*{The flat-coordinate specialization}
\addcontentsline{toc}{subsubsection}{The flat-coordinate specialization}

Take \(I=[-2,2]\), \(C(b)=(b^2,b^3-b)\), \(q(b)=b\), \(J=[-a,a]\),
\(a_0=\psi\), and one localizing resolver with \(a_1(r)=1-r\). The curve
has exactly one multiple fibre \(\{-1,1\}\); its tangent vectors
\((-2,2)\) and \((2,2)\) are independent. Thus \(d=1\), \(w=1\), and
\eqref{eq:s1-1-5} is precisely \ref{app:s2-1}, with its original five- and two-dimensional
blocks. Evenness reduces the full-radius supremum to \([0,a]\) exactly
as in the flat-coordinate model. The present proof also derives the
needed all-pairs property from primitive nonparallel branch geometry,
while the flat-coordinate model's explicit polynomial identity remains a
stronger direct certificate for that specialization.

\subsubsection*{Positive-amplitude scalar sources}
\addcontentsline{toc}{subsubsection}{Positive-amplitude scalar sources}

Take \(I=[-1,1]\), the same curve, \(q(b)=b\), \(\chi=1\), \(J=[0,1]\),
and \(a_0=0\). Omit the known-zero amplitude coordinate. The primary is
then exactly \((r,t-b,b^2,b^3-b)\). Two scalar nonlocalizing sources
with \(a_1(r)=P(r)>0\), \(a_2(r)=1+r\) define a positive-amplitude
experiment when \(P\) is \(C^2\). Its positive lower bounds satisfy the
nonlocalizing condition, so no radius observations have been silently
added. Since \(w=d=1\), the profile is 
\[
\Phi\left(-\sqrt{\min_r\{n_1P(r)^2+n_2(1+r)^2\}}\right),
\]
 where the primary count \(n_3\) is the count \(n\) in \ref{app:s1-1}. The
application uses \(C^2\) amplitudes; an extension to \(C^1\) amplitudes
is outside the hypotheses of \ref{app:s1-1}.

\subsubsection*{Limits of the scalar-curve parameterization}
\addcontentsline{toc}{subsubsection}{Limits of the scalar-curve parameterization}

A two-pole harmonic experiment with free complex amplitudes generally
has a multidimensional parameter manifold modulo labels and complex
resolver constellations. Such a model is outside the scalar-curve form
\eqref{eq:s1-1-2}--\eqref{eq:s1-1-3}.
Gaussian binary bounds for fixed pairs and finite-constellation
comparisons remain available, but a global unknown-parameter theorem
requires its own control of approximate aliases and local charts.

\subsubsection*{Identification and local regularity}
\addcontentsline{toc}{subsubsection}{Identification and local regularity}

The primary map \eqref{eq:s1-1-2} is everywhere immersive: its radius coordinate
isolates the \(r\) derivative, its shifted target coordinate isolates
the \(t\) derivative, and \(C'(b)\ne0\) isolates the \(b\) derivative.
Subject to \eqref{eq:s1-1-1}, it globally identifies the target exactly when 
\begin{equation}
\chi(r)\{q(\beta_{h,i})-q(\beta_{h,l})\}=0
\quad\text{whenever }a_0(r)=0,
\tag{S1.2.1}
\label{eq:s1-2-1}
\end{equation}
 for all listed fibre pairs. Equality of primary observations first
fixes \(r\), then either fixes \(b\) or selects one listed fibre; a
nonzero \(a_0(r)\) resolves the scalar amplitude. Equation \eqref{eq:s1-2-1} then
proves sufficiency, and the actual pairs in the lower bound prove
necessity. Thus even within this reusable class, local immersion and
exact target identification do not determine the profile in \eqref{eq:s1-1-4}.

\subsection{A polynomial obstruction when nonparallel pair localization is removed}
\label{app:s1-3}

Fix an integer \(p\ge2\), \(b\in[-2,2]\), and \(t\in[-2,2]\). Set 
\[
C_p(b)=(b^2-1,\ b(b^2-1)^p),\qquad q_p(b)=b(b^2-1),
\]
 and observe \(n\) independent unit-covariance Gaussian blocks with
mean 
\begin{equation}
F_p(b,t)=(t-q_p(b),\ C_p(b)).
\tag{S1.3.1}
\end{equation}
 An independent directly observed radius may be appended without
affecting any conclusion.

\setcounter{theorem}{2}
\begin{proposition}
This fixed polynomial model is everywhere
immersive, its only nontrivial exact nuisance fibre is \(\{-1,1\}\), its
exact fibres globally identify \(t\), and 
\begin{equation}
\mathcal R_p(n)\asymp(1+n)^{-1/p}.
\tag{S1.3.2}
\label{eq:s1-3-2}
\end{equation}
 Consequently the exact-fibre target-weighted profile in \eqref{eq:s1-1-4} is
identically zero for this model, whereas its risk is slower than
\(n^{-1}\). Removing the nonparallel-pair condition from \ref{app:s1-1} would make
the theorem false even for polynomial maps on a full box.
\end{proposition}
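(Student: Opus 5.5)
Both directions of \eqref{eq:s1-3-2} are set up as in the main-text sketch of Theorem~\ref{thm:tangency-boundary}, after first dispatching the deterministic claims.

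\emph{Deterministic claims.} The derivative of the last two coordinates is $(2b,\,(b^2-1)^{p-1}((2p+1)b^2-1))$. If $b=0$ the second entry is $(-1)^p\ne0$, so the map is everywhere immersive. If $C_p(b)=C_p(c)$ with $b\ne c$, then $b^2=c^2$, so $c=-b$, and $b(b^2-1)^p=0$ with $b\ne0$ forces $b=\pm1$. Since $q_p(\pm1)=0$, equality of the first coordinate fixes $t$. Hence the only nontrivial fibre is $\{-1,1\}$, its target gap is zero, and the profile \eqref{eq:s1-1-4} vanishes.

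\emph{Lower bound.} I use two two-point reductions. The first varies $t$ by $\pm c(1+n)^{-1/2}$ at fixed $b$, which gives the $(1+n)^{-1}$ floor. The second compares $b_\pm=\pm\sqrt{1+z}$, $t_\pm=q_p(b_\pm)=\pm z\sqrt{1+z}$ with small $z>0$. These lie in the box, and their first two coordinates coincide: the shifted target coordinate is $0$ and $b^2-1=z$. The last coordinates differ by $2z^p\sqrt{1+z}$. The KL divergence is therefore $2n(1+z)z^{2p}$, which is bounded when $z=c\,n^{-1/(2p)}$. The squared target separation is $4z^2(1+z)\asymp n^{-1/p}$, and the two-point reduction cited from \cite{Tsybakov2009} gives $\mathcal R_p\gtrsim n^{-1/p}$. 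For bounded $n$, bounded loss together with the same pair at fixed $z$ covers the claim.

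\emph{Upper bound.} The core is the Hölder inequality $|q_p(b)-q_p(c)|\le C\|C_p(b)-C_p(c)\|^{1/p}$ on $[-2,2]^2$. I split pairs into three regions.
\begin{itemize}
\item Near the diagonal, $|q_p(b)-q_p(c)|\lesssim|b-c|\lesssim\|C_p(b)-C_p(c)\|$ by local immersion and uniform Taylor control.
\item Away from the diagonal and from neighborhoods of $(\pm1,\mp1)$, compactness and the fibre classification give a positive lower bound on $\|C_p(b)-C_p(c)\|$, so bounded $q_p$ suffices.
\item Near $(1,-1)$, set $x=b^2-1$, $y=c^2-1$, and note $|q_p(b)-q_p(c)|=|bx-cy|\lesssim|x|+|y|$.
\end{itemize}
In the third region there are two subcases. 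If $xy<0$, then $|x|+|y|=|x-y|$, which is the first-coordinate difference. If $xy\ge0$, the second-coordinate difference $|bx^p-cy^p|$ equals $|b||x|^p+|c||y|^p$ for either parity of $p$, because $b>0>c$ and $x^p,y^p$ share a sign. This quantity is $\gtrsim(|x|+|y|)^p$, and since $\|C_p\|$ is bounded, the first-subcase bound $|x-y|$ is also dominated by a constant times $|x-y|^{1/p}$.

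The estimator is then $\hat b$, a near-minimizer over $[-2,2]$ of $\|\bar C-C_p(b)\|$, and $\hat t=\operatorname{clip}(\bar X+q_p(\hat b))$. By \eqref{eq:s1-1-8}-type reasoning, $\|C_p(\hat b)-C_p(b)\|\le2\|\bar C-C_p(b)\|+1/n$. The Hölder bound then gives $\mathbb E(\hat t-t)^2\lesssim n^{-1}+\mathbb E\|\bar C-C_p(b)\|^{2/p}+n^{-2/p}\lesssim n^{-1/p}$ by Gaussian moments.

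The main obstacle is the case analysis near the tangential fibre, in particular checking that no sign cancellation occurs in the second coordinate and that the two subcases cover every nearby pair, including pairs near $(-1,1)$, which follow by symmetry. Everything else is routine.
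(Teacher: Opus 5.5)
Your proposal is correct and matches the paper's proof step for step. It uses the same derivative and fibre computation, and the same tangential pair $b_\pm=\pm\sqrt{1+z}$, $t_\pm=\pm z\sqrt{1+z}$ with $z\asymp n^{-1/(2p)}$ for the lower bound. For the upper bound it uses the same H\"older inverse modulus, with the $xy<0$ versus $xy\ge0$ split near $(1,-1)$, feeding the same minimum-distance fit plus shifted-target estimator. One thing you leave implicit is that the tangents $C_p'(\pm1)=(\pm2,0)$ are collinear, which the paper states to show that the nonparallel-pair condition is the hypothesis that fails; this also follows by contradiction with the finite-fibre theorem once the risk is shown to be slower than $n^{-1}$.
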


\textbf{Proof of geometry.} 
\[
C_p'(b)=\left(2b,(b^2-1)^{p-1}\{(2p+1)b^2-1\}\right).
\]
 For \(b\ne0\) its first component is nonzero, while at zero its
second component is \((-1)^p\ne0\). Hence it is an immersion. Equality
of first coordinates forces \(c=b\) or \(c=-b\); the latter equality in
the second coordinate forces \(b(b^2-1)^p=0\), leaving only the distinct
pair \(\pm1\). At both endpoints of that fibre \(q_p=0\), so the shifted
target coordinate determines the same target. But the two tangent
vectors at \(\pm1\) are \((\pm2,0)\), which are collinear.

\textbf{Actual lower bound.} For small \(s>0\), take 
\[
b_\pm=\pm\sqrt{1+s},\qquad t_\pm=\pm s\sqrt{1+s}.
\]
 Their first mean coordinate is zero, their second is \(s\), and their
last coordinates are \(\pm s^p\sqrt{1+s}\). Their target squared
half-gap is \((1+s)s^2\), and their exact binary error is
\(\Phi(-\sqrt{n(1+s)s^{2p}})\). Taking \(s=c n^{-1/(2p)}\) gives a fixed
positive error and risk at least \(c'n^{-1/p}\). Bounded counts follow
by fixed separated alternatives.

\textbf{Matching upper bound.} On the compact curve image there is a
global target inverse modulus 
\begin{equation}
|q_p(b)-q_p(c)|\le C\|C_p(b)-C_p(c)\|^{1/p}.
\tag{S1.3.3}
\label{eq:s1-3-3}
\end{equation}
 Near the diagonal, local immersion proves a Lipschitz bound, which
implies this weaker H\"older bound on a compact set. Away from the
diagonal and the two ordered double-point
neighborhoods,
compactness gives a positive image separation. It remains to check \(b\)
near \(1\), \(c\) near \(-1\). Put \(x=b^2-1\), \(y=c^2-1\); there
\(b>0\), \(c<0\), and \(|b|,|c|\) are bounded above and below. If
\(xy<0\), 
\[
|q_p(b)-q_p(c)|\le C(|x|+|y|)=C|x-y|,
\]
 which is controlled by the first image-coordinate difference. If
\(xy\ge0\), then for either parity of \(p\), 
\[
|b x^p-c y^p|=|b|\,|x|^p+|c|\,|y|^p
\ge c_0(|x|+|y|)^p,
\]
 where for odd \(p\) a common sign may be factored out. The left side
is the absolute second image-coordinate difference and bounds
\(|q_p(b)-q_p(c)|^p\) up to a constant. This proves \eqref{eq:s1-3-3}.

Fit \(C_p\) by a measurable minimum-distance rule to its Gaussian sample
mean and estimate \(t\) by the shifted-target sample mean plus \(q_p\)
of that fit, followed by clipping. The curve residual is at most twice
the actual Gaussian noise plus \(1/n\). Equation \eqref{eq:s1-3-3} and finite
Gaussian moments give squared target risk
\(O(n^{-1}+n^{-1/p})=O(n^{-1/p})\). This proves \eqref{eq:s1-3-2}.

The example identifies the additional information needed at a tangency:
its contact order \(p\) determines the slower risk law. Finite exact
fibres alone cannot replace quantitative control between branches.

\subsection{Composite-nuisance obstruction}
\label{app:s1-4}

For the point-versus-sphere problem treated in \ref{app:s4} without calibration 
\[
Z\sim N_d(bRu,I_d),\qquad b\in\{0,1\},\quad u\in S^{d-1},
\]
 the minimum cross-label squared mean distance is \(R^2\) for every
fixed dimension \(d\). Every simple pair has error \(\Phi(-R/2)\), but
\ref{app:s4} gives the composite minimax error 
\[
e_d(R)\sim K_dR^{(d-3)/2}e^{-R^2/8}.
\]
 As \(R\to\infty\), its ratio to the simple-pair tail is of order
\(R^{(d-1)/2}\). In particular for \(d\ge2\), the scalar minimum
distance cannot determine even a bounded-factor comparison to the binary
tail. This comparison concerns the scalar minimum distance. The complete
labelled pairwise-distance arrays contain additional information and are
not being equated.

The obstruction can be stated within one fixed dimension and one fixed
two-source architecture. In the orientation experiment, the complete
least cross-label distance of 
\[
A\sim N_d(\sqrt{k}u,I_d),\qquad B\sim N_d(bRu,I_d)
\]
 is exactly \(R^2\) for every \(k\), because
\(\inf_{u,v}\{k\|u-v\|^2+R^2\}=R^2\). For every positive calibration
count, both the null and alternative mean orbits have dimension \(d-1\).
Compare \(k=1\) with \(k=R^2\), preserving both dimensions.
Nevertheless,
for fixed \(d\ge2\), the orientation-calibration theorem gives 
\[
\frac{e_d(R,1)}{e_d(R,R^2)}\asymp R^{(d-1)/2}\longrightarrow\infty.
\]
 Thus the descriptor consisting only of the minimum cross-target
distance and the orbit dimension is insufficient even within fixed
dimension; calibration alignment of the shared orbit matters. The
fixed-radius calibration law in \ref{app:s4-3}-\ref{app:s4-4} quantifies this dependence.

The continuous orientation orbit changes the hypotheses needed for a
sharp risk theorem. Conditional on its independently localized scalar
radius and its finite curve chart, \ref{app:s1-1} has only finitely many known
scalar candidate means, with a count-independent finite pair count. In
the sphere problem, the difficult alternatives form a continuous orbit
whose effective angular complexity increases with \(R\). The finite
union bound that retains the binary prefactor in \ref{app:s1-1} has no uniform
finite replacement there. The orientation experiment quantifies how an
additional shared-direction calibration source changes that orbit
penalty.

\newpage

\section{Flat target gaps and acquisition windows}
\label{app:s2}

\subsection{Exact all-count comparison on a fixed full box}
\label{app:s2-1}

Fix \(0<a<1/2\) and let

\[
K=[-a,a]_r\times[-2,2]_b\times[-2,2]_t,\qquad T(r,b,t)=t.
\]

Let \(\psi,\chi\) be fixed even \(C^2\) functions on a neighborhood of
\([-a,a]\), with

\[
\psi(0)=\psi'(0)=0,\quad \psi(r)>0\ (r\ne0),\quad 0\le\chi(r)\le1,\quad \chi(0)=0.
\]

Assume \(\psi\ge0\) on that neighborhood. In the smooth examples below
both functions are \(C^\infty\) and flat at zero. Define the labelled
means

\[
F(r,b,t)=\bigl(r,\ t-b\chi(r),\ b^2,\ b^3-b,\ b\psi(r)\bigr)\in\mathbb R^5,
\]

\begin{equation}
G(r,b,t)=\bigl(r,\ b(1-r)\bigr)\in\mathbb R^2. \tag{S2.1}
\end{equation}

Observe \(n,m\ge0\) independent blocks from \(N_5(F,I_5)\) and
\(N_2(G,I_2)\), respectively. All repetitions and sources share the same
parameter. The scalar count is \(5n+2m\). Set

\[
H_{n,m}(r)=n\psi(r)^2+m(1-r)^2,
\]

\begin{equation}
S_{\psi,\chi}(n,m)=\sup_{0\le r\le a}\chi(r)^2\Phi\bigl(-\sqrt{H_{n,m}(r)}\bigr). \tag{S2.2}
\label{eq:s2-2}
\end{equation}

The reduction of the supremum to nonnegative \(r\) is exact:
\(\psi,\chi\) are even and \((1-r)^2\le(1+r)^2\) for \(r\ge0\).

\begin{theorem}
For this fixed experiment, uniformly over all
nonnegative integer counts,

\begin{equation}
\quad R_{\psi,\chi}(n,m)\asymp\frac1{1+n}+S_{\psi,\chi}(n,m).\quad \tag{S2.3}
\label{eq:s2-3}
\end{equation}

The comparison constants may depend on the fixed functions and \(a\),
but not on counts. This is an all-count comparison theorem, not an exact
multiplicative minimax constant. Its upper bound is obtained by a fully
specified measurable estimator below.
\end{theorem}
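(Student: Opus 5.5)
The plan is to obtain \eqref{eq:s2-3} as a direct specialization of the finite-fibre theorem in \ref{app:s1-1}. Three things must be checked: the hypotheses of that theorem hold, its profile \eqref{eq:s1-1-4} coincides with \eqref{eq:s2-2} up to constants, and the parameter box is admissible.

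\textbf{Matching the models.} Set \(I=[-2,2]\), \(J=[-a,a]\), \(C(b)=(b^2,b^3-b)\), \(q(b)=b\), and \(a_0=\psi\). Take one localizing resolver, so \(\mathcal L=\{0,1\}\), with \(a_1(r)=1-r\). This amplitude is nonnegative and \(C^2\) on a neighborhood of \(J\) because \(a<1/2\). Also \(\psi\ge0\) is \(C^2\) by assumption, and \(\chi\) is \(C^1\). Then \(F\) and \(G\) are exactly the maps \eqref{eq:s1-1-2}--\eqref{eq:s1-1-3}, with the same block structure and independence.

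\textbf{Curve hypotheses.} \(C'(b)=(2b,3b^2-1)\) never vanishes. Suppose \(C(b)=C(c)\) with \(b\neq c\). Then \(b^2=c^2\) forces \(c=-b\), and \(b^3-b=-(b^3-b)\) forces \(b\in\{0,\pm1\}\). Since \(b\neq c\), only \(b=\pm1\) remains, so \(\{-1,1\}\) is the unique multiple fibre. Its tangents \((2,2)\) and \((-2,2)\) are independent.

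\textbf{Box condition.} We have \(\operatorname{diam}q(I)=4\) and \(\|\chi\|_\infty\le1\), so \eqref{eq:s1-1-1} requires \(T_*\ge2\). This holds with \(T_*=2\), the \(t\)-range of \(K\).

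\textbf{Profile.} With \(d=w=1\), \eqref{eq:s1-1-4} becomes
\[
\sup_{|r|\le a}\chi(r)^2\Phi\bigl(-\sqrt{n\psi(r)^2+m(1-r)^2}\bigr).
\]
For \(r<0\), write \(r=-\rho\) with \(\rho>0\). Evenness of \(\psi\) and \(\chi\) gives the term at \(\rho\) with \((1+\rho)^2\) in place of \((1-\rho)^2\). Since \((1+\rho)^2\ge(1-\rho)^2\) and \(\Phi\) is increasing, that term is no larger than the term at \(+\rho\). Hence the supremum over \([-a,a]\) equals \(S_{\psi,\chi}(n,m)\) exactly, and Theorem \ref{app:s1-1} yields \eqref{eq:s2-3} for all counts.

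The main obstacle is not the reduction, which is routine, but making sure the inherited estimator applies without extra assumptions. Two degenerate regimes need explicit handling. First, when \(n\) is bounded, the trivial estimator together with the core floor covers every \(m\). Second, the flat zero of \(\psi\) at \(r=0\) lets \(H\) vanish. Here I would re-verify that the Glaeser bound \(|\psi'|^2\le C\psi\) for the nonnegative \(C^2\) extension gives \eqref{eq:s1-1-14}. I would also check that the case \(\widehat v=0\) (for instance \(m=0\), \(\hat r=0\)) is covered by the deterministic tie rule, since there \(H=E=o(1)\) and the bound follows from probability at most one.

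If a self-contained measurable estimator is preferred, I would write one out directly. Clip the pooled radius to \([-a,a]\). Fit \(b\) by near-minimum distance to the curve mean. If the fit lies near \(\pm1\), choose the sign by nearest candidate mean among \(\pm\widehat v\), with \(\widehat v=(\sqrt n\psi(\hat r),\sqrt m(1-\hat r))\). Then refit on the selected chart and return the clipped value of \(\bar X+\widehat b\chi(\hat r)\). The risk analysis repeats Steps D and E of \ref{app:s1-1} with the single pair \(\pm1\).
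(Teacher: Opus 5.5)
Your reduction is correct, and it is the specialization the paper itself records in \ref{app:s1-2} and uses in the main text for Corollary~\ref{thm:full-box-risk}. The hypothesis check, the values $d=w=1$, and the evenness reduction to $[0,a]$ are exactly what is needed:
\begin{itemize}
\item $C'$ never vanishes;
\item $\{-1,1\}$ is the only multiple fibre, and its tangents are independent;
\item $a_1=1-r>0$ near $J$ because $a<1/2$;
\item $\psi\ge0$ on the extension neighborhood;
\item $\tfrac12\cdot4\cdot\|\chi\|_\infty\le2=T_*$.
\end{itemize}
The degenerate cases you flag (bounded $n$, and $\widehat v=0$) are already handled inside the proof of \ref{app:s1-1}.

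The proof printed under this theorem in \ref{app:s2-1} is instead self-contained. It differs from yours in two places.
\begin{itemize}
\item It replaces the compactness and least-singular-value all-pairs lemma by the explicit factorization \eqref{eq:s2-7}. That identity gives concrete constants: $|b-b'|\le4\|C(b)-C(b')\|$ when $|b-b'|\le1$, and $2\varepsilon$-localization to $\pm1$ otherwise.
\item On the ambiguous branch it replaces the chart refit by the closed-form magnitude $\sqrt{\operatorname{clip}_{[1/4,4]}\bar U}$, multiplied by the sign of the linear score $A=n\psi(\hat r)\bar Y+m(1-\hat r)\bar B$.
\end{itemize}
Your nearest-candidate rule between $\pm\widehat v$ is the same sign decision. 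Indeed $\|Y-\widehat v\|^2<\|Y+\widehat v\|^2$ exactly when $Y\cdot\widehat v=A>0$. Because the pair is symmetric, its midpoint is $c=0$, so the $|c|\sqrt E$ term in \eqref{eq:s1-1-15} is absent.

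Your route is shorter and shows the result as one instance of the general finite-fibre mechanism, with constants inherited from compactness. The direct route yields explicit constants and the explicit sign-and-magnitude estimator that Appendix~\ref{app:s3} turns into a certified finite computation.
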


\subsubsection*{Geometry: full rank and target identification}
\addcontentsline{toc}{subsubsection}{Geometry: full rank and target identification}

The derivatives of \(F\) in \(r,t,b\) are independent everywhere: the
first coordinate isolates the \(r\) derivative; the second isolates
\(t\); and the \(b\) derivative in the third and fourth coordinates is
\((2b,3b^2-1)\), which never vanishes. Thus \(DF\) has rank three on a
neighborhood of every point of \(K\).

To classify its fibres, equality of \(b^2\) gives \(b'=b\) or \(b'=-b\).
For opposite unequal values, equality of \(b^3-b\) forces \(b=\pm1\).
Equality of the last coordinate then forces \(\psi(r)=0\), hence
\(r=0\). Because \(\chi(0)=0\), equality of the second coordinate forces
\(t=t'\). Consequently the only nontrivial core fibres are

\begin{equation}
\{(0,1,t),(0,-1,t)\},\qquad -2\le t\le2. \tag{S2.4}
\label{eq:s2-4}
\end{equation}

Every fibre is
target-constant,
and each has at most two points. For \(|t|<2\) the two points of \eqref{eq:s2-4}
lie in the interior of the full box. Thus \(F\) globally identifies the
target even though it does not globally identify the nuisance. The
complete labelled exact fibre relations of both sources are the same for
every admissible choice of \(\chi\): the core relation was just computed
and \(G\) is unchanged. The lower-bound pairs below approach these
fibres while their target gap shrinks. There is no target-separated
exact core alias.

\subsubsection*{Lower bounds using actual common-parameter alternatives}
\addcontentsline{toc}{subsubsection}{Lower bounds using actual common-parameter alternatives}

For each \(r\in[0,a]\), take

\[
\theta_+=(r,1,\chi(r)),\qquad\theta_-=(r,-1,-\chi(r)).
\]

These belong to \(K\). Their first four core coordinates are identical,
while their last core difference is \(2\psi(r)\) and their second
resolver difference is \(2(1-r)\). The squared whitened mean distance is
\(4H_{n,m}(r)\), the real Gaussian KL is \(2H_{n,m}(r)\), and the exact
equal-prior optimal binary error is \(\Phi(-\sqrt{H_{n,m}(r)})\). A
wrong nearest-target decision entails squared error at least
\(\chi(r)^2\). Therefore

\begin{equation}
R_{\psi,\chi}(n,m)\ge S_{\psi,\chi}(n,m). \tag{S2.5}
\end{equation}

Separately fix \(r=b=0\) and compare \(t=\pm c(1+n)^{-1/2}\), with fixed
small \(c>0\). Only the second core coordinate changes. The resolver is
exactly invariant and the KL is bounded independently of \(m\). Hence

\begin{equation}
R_{\psi,\chi}(n,m)\ge c'/(1+n). \tag{S2.6}
\end{equation}

Taking the maximum proves the lower half of \eqref{eq:s2-3}. Unlimited resolver
measurements cannot remove this independent core-coordinate floor.

\subsubsection*{An explicit polynomial all-pairs dichotomy}
\addcontentsline{toc}{subsubsection}{An explicit polynomial all-pairs dichotomy}

Write \(C(b)=(b^2,b^3-b)\) and let \(d=b-b'\), \(s=b+b'\). Direct
factorization gives

\begin{equation}
\|C(b)-C(b')\|^2=d^2\left[s^2+\frac{(3s^2+d^2-4)^2}{16}\right]. \tag{S2.7}
\label{eq:s2-7}
\end{equation}

If \(|d|\le1\), the bracket is at least \(1/16\): either \(s^2\ge1/16\),
or \(3s^2+d^2-4\le-45/16\). Thus

\begin{equation}
|b-b'|\le4\|C(b)-C(b')\|\quad\text{when }|b-b'|\le1. \tag{S2.8}
\label{eq:s2-8}
\end{equation}

If \(|d|>1\) and \(\|C(b)-C(b')\|\le\varepsilon\le1\), \eqref{eq:s2-7} gives
\(|s|\le\varepsilon\) and
\(|d^2-4|\le4\varepsilon+3\varepsilon^2\le7\varepsilon\). Since
\(|d|+2>3\),

\[
\bigl||d|-2\bigr|\le7\varepsilon/3.
\]

It follows that, with \(\epsilon=\operatorname{sign}(d)\),

\begin{equation}
|b-\epsilon|\le2\varepsilon,\qquad |b'+\epsilon|\le2\varepsilon. \tag{S2.9}
\label{eq:s2-9}
\end{equation}

Thus two good fits are either close in the ordinary parameter or both
close to the opposite points \(\pm1\). This proves the exact all-pairs
property used by the estimator, rather than inferring it from derivative
rank.

\subsubsection*{Estimator construction}
\addcontentsline{toc}{subsubsection}{Estimator construction}

Bounded \(n\), including zero, are handled by returning zero: loss is at
most four and \((1+n)^{-1}\) absorbs this with a fixed constant. Assume
henceforth \(n\) exceeds a fixed sufficiently large threshold and put
\(N=n+m\).

Pool the first coordinates of both sources to obtain
\(\bar R\sim N(r,N^{-1})\) and set
\(\hat r=\operatorname{clip}_{[-a,a]}\bar R\). Denote the remaining core
means by

\[
\bar X=t-b\chi(r)+n^{-1/2}Z_X,\quad(\bar U,\bar V)=C(b)+n^{-1/2}(Z_U,Z_V),
\]

\[
\bar Y=b\psi(r)+n^{-1/2}Z_Y.
\]

For \(m>0\), let \(\bar B=b(1-r)+m^{-1/2}Z_B\) be the second resolver
mean. These noises are mutually independent, and independent of
\(\hat r\).

Fix a countable dense sequence in \([-2,2]\). Define \(\tilde b\) to be
the first element with residual to \((\bar U,\bar V)\) within \(1/n\) of
the countable infimum. Continuity and compactness make that infimum the
compact minimum. A first-index selection from countably many measurable
comparisons is measurable, and

\begin{equation}
\|C(\tilde b)-C(b)\|\le2\|(\bar U,\bar V)-C(b)\|+1/n. \tag{S2.10}
\label{eq:s2-10}
\end{equation}

Set \(\delta_n=\sqrt{16\log n/n}\). If

\[
\operatorname{dist}(\tilde b,\{-1,1\})>8\delta_n,
\]

put \(\hat b=\tilde b\). Otherwise put

\[
\hat q=\sqrt{\operatorname{clip}_{[1/4,4]}\bar U},
\]

\begin{equation}
A=n\psi(\hat r)\bar Y+m(1-\hat r)\bar B,\qquad\hat b=\hat q\operatorname{sign}(A), \tag{S2.11}
\label{eq:s2-11}
\end{equation}

where a missing resolver contribution is zero and
\(\operatorname{sign}(0)=1\). Finally return

\begin{equation}
\hat t=\operatorname{clip}_{[-2,2]}\{\bar X+\hat b\chi(\hat r)\}. \tag{S2.12}
\label{eq:s2-12}
\end{equation}

No true nuisance or noiseless coordinate is used. The countable
minimization is an existence-based measurable rule; no computational
complexity guarantee is claimed.

\subsubsection*{Regular branches and magnitude error}
\addcontentsline{toc}{subsubsection}{Regular branches and magnitude error}

On \(\|(\bar U,\bar V)-C(b)\|\le\delta_n\), the discrepancy in \eqref{eq:s2-10}
is at most \(3\delta_n\). If the estimator trusts \(\tilde b\), \eqref{eq:s2-9}
rules out \(|\tilde b-b|>1\), since that would put \(\tilde b\) within
\(6\delta_n\) of \(\pm1\). Equation \eqref{eq:s2-8} then bounds the error by four
times the actual discrepancy in \eqref{eq:s2-10}. Its squared expectation is
\(O(n^{-1})\), including the trusted-branch indicator.

If the estimator instead enters \eqref{eq:s2-11}, \(\tilde b\) is within
\(8\delta_n\) of \(\pm1\). Equation \eqref{eq:s2-10}, or just its first
coordinate, then shows \(\bigl||b|-1\bigr|\le C\delta_n\). For all
sufficiently large \(n\), \(b^2\in[1/4,4]\) and the clipped square root
obeys

\begin{equation}
|\hat q-|b||\le C|\bar U-b^2|. \tag{S2.13}
\label{eq:s2-13}
\end{equation}

The excluded Gaussian event has probability at most
\(e^{-8\log n}=n^{-8}\), since the squared norm of two standard normal
coordinates is \(\chi^2_2\). Bounded loss absorbs it. The final estimate
uses the actual errors \eqref{eq:s2-10}, \eqref{eq:s2-13}, rather than the larger
screening radius; thus no \(\log n/n\) loss appears.

Since \(\chi\) is bounded and Lipschitz on the fixed interval, clipping,
\eqref{eq:s2-12}, and independence of the radius coordinate give

\begin{equation}
\mathbb E(\hat t-t)^2\le C/n+C\chi(r)^2\Pr\{\text{wrong sign in (S2.11), good event}\}. \tag{S2.14}
\label{eq:s2-14}
\end{equation}

The \(C/n\) term includes the variance of \(\bar X\), the actual
magnitude/local-fit
error, the radius mean squared error \(N^{-1}\), and the
exceptional-event probability.

\subsubsection*{Noisy weighting preserves the complete Gaussian tail}
\addcontentsline{toc}{subsubsection}{Noisy weighting preserves the complete Gaussian tail}

A fixed nonnegative \(C^2\) extension of \(\psi\) implies the Glaeser bound \cite{Glaeser1963} \(|\psi'(r)|^2\le C\psi(r)\) on the compact interval. One
elementary proof uses Taylor's inequality at a displacement in the
negative derivative direction when that displacement stays in a fixed
extension neighborhood; the remaining derivative range is handled by
compactness. Taylor's formula consequently gives

\begin{equation}
|\psi(\hat r)-\psi(r)|\le C\sqrt{\psi(r)}|\hat r-r|+C|\hat r-r|^2. \tag{S2.15}
\label{eq:s2-15}
\end{equation}

Let

\[
v=(\sqrt n\psi(r),\sqrt m(1-r)),\qquad \hat v=(\sqrt n\psi(\hat r),\sqrt m(1-\hat r)),
\]

with zero-count components omitted, and \(H=\|v\|^2=H_{n,m}(r)\). Every
component is nonnegative. Conditional on the radius coordinate, the sign
score has signed mean \(|b|\hat v\cdot v\) and standard deviation
\(\|\hat v\|\). When \(\hat v\ne0\), its squared positive
signal-to-noise ratio is

\begin{equation}
\rho^2=b^2\frac{(\hat v\cdot v)^2}{\|\hat v\|^2}\ge b^2\{H-\|\hat v-v\|^2\}. \tag{S2.16}
\label{eq:s2-16}
\end{equation}

This follows from orthogonal projection: the distance of \(v\) from the
line through \(\hat v\) is at most \(\|v-\hat v\|\). If \(\hat v=0\),
then \(m=0\) and \(\hat r=0\); this has probability zero for a
continuous radius observation when \(n>0\), because zero is an interior
clipping point. An arbitrary convention on that null event suffices.

On the further Gaussian event

\[
|\hat r-r|\le\sqrt{16\log n/N},
\]

whose complement has probability at most \(2n^{-8}\), \eqref{eq:s2-15} yields

\begin{equation}
E:=\|\hat v-v\|^2\le C\left\{\frac{\log n}{\sqrt N}\sqrt H+\frac{H\log n}{N}+\frac{\log^2 n}{N}\right\}. \tag{S2.17}
\label{eq:s2-17}
\end{equation}

Indeed \(n\psi(r)\le\sqrt{nH}\), \(m\le H/(1-a)^2\), and \(n\le N\). On
the ambiguous branch \(b^2\ge1-C\delta_n\). Thus, for every fixed
\(K_0\), uniformly over all \(m\) and parameters with
\(H\le K_0\log n\),

\begin{equation}
\rho^2\ge H-o(1), \tag{S2.18}
\end{equation}

because \(N\ge n\) and \(\delta_n\log n\to0\). A bounded additive change
in a squared Gaussian-tail argument changes its tail by at most a fixed
factor, uniformly for nonnegative arguments. For
completeness,
this follows from the Mills comparison
\(\Phi(-\sqrt x)\asymp(1+x)^{-1/2}e^{-x/2}\); the ratio of the
polynomial factors for \(x\) and \((x-1)_+\) is bounded. Hence the
conditional wrong-sign probability in this region is at most

\begin{equation}
C\Phi(-\sqrt H). \tag{S2.19}
\label{eq:s2-19}
\end{equation}

In the region \(H>K_0\log n\), \eqref{eq:s2-17} gives \(E/H=o(1)\) uniformly, and
\eqref{eq:s2-16} gives \(\rho^2\ge H/2\) for sufficiently large \(n\). Choosing,
for example, \(K_0=32\) bounds the wrong-sign probability by
\(e^{-H/4}\le n^{-8}\). Combining this with \eqref{eq:s2-14}, \eqref{eq:s2-19}, and the
even-function reduction in \eqref{eq:s2-2} proves the upper half of \eqref{eq:s2-3} for
every allocation. The argument does not assume that \(m\) is
logarithmic; only potentially consequential errors automatically fall in
that range.

\subsection{Identical formal jets, arbitrarily different polynomial minimax exponents}
\label{app:s2-2}

For \(r\ne0\) set

\[
\psi_*(r)=\exp\left\{-\tfrac12\exp(1/r^2)\right\},\qquad\psi_*(0)=0,
\]

\begin{equation}
\chi_\alpha(r)=\psi_*(r)^\alpha,\qquad0<\alpha<1. \tag{S2.20}
\label{eq:s2-20}
\end{equation}

Both functions are even, \(C^\infty\), and flat at zero. Differentiating
any finite number of times produces a finite sum of factors polynomial
in \(r^{-1}\) and \(e^{1/r^2}\) multiplied by the original double
exponential; every such derivative tends to zero at the origin. The same
argument applies to every fixed positive \(\alpha\).

The box, target, source dimensions, covariance, complete labelled exact
fibre relations, and derivative-rank profile in \ref{app:s2-1} are identical
throughout this family. Moreover \textbf{all derivatives of every order
of both source maps at every point of the critical hypersurface
\(\{r=0\}\) agree for all \(\alpha\)}. In particular their complete
formal germs at both points of each nontrivial fibre \eqref{eq:s2-4} agree. The
target is the same linear coordinate map \(t\) throughout. The full
one-block core Fisher matrix is positive definite everywhere and, for
each fixed \(\alpha\), has a positive minimum eigenvalue on \(K\). This
is a pointwise and uniform local regularity statement; it does not
control near pairs coming from the two distinct charts in \eqref{eq:s2-4}.

Nevertheless,
with no resolver
observations,
putting \(q=n\psi_*(r)^2\) in \eqref{eq:s2-2} gives

\[
S_{\psi_*,\chi_\alpha}(n,0)=n^{-\alpha}\sup_{0\le q\le n\psi_*(a)^2}q^\alpha\Phi(-\sqrt q).
\]

The supremum converges to a positive finite constant. The upper
finiteness follows from the Gaussian tail; positivity follows by taking
any fixed \(q>0\), available for sufficiently large \(n\). Since
\(0<\alpha<1\), \ref{app:s2-1} yields

\begin{equation}
R_\alpha(n,0)\asymp n^{-\alpha}. \tag{S2.21}
\end{equation}

Thus the complete formal jets at the exact core-fibre set, together with
the complete exact fibre relation and the everywhere-full derivative
rank, do not determine even the \textbf{polynomial exponent} of
core-only global minimax risk in the \(C^\infty\) category. The
assertion is about jets at the critical set; it does not say that full
function values or Taylor data at every interior point agree. The flat
functions \eqref{eq:s2-20} are outside the globally subanalytic class.

For any fixed \(\alpha\ne\alpha'\), the two risks fail all-count
comparison equivalence already on \(m=0\). This is a statistical
distinction, not an arbitrary encoding claim. The construction makes no
assertion of analytic or quasianalytic flatness. Complete labelled
Gaussian distance arrays differ between the models; the distinction
concerns reduced summaries of the experiment, not equality of its full
geometry.

\subsection{The coupled polynomial--flat acquisition window}
\label{app:s2-3}

Write \(\ell=\log n\) and

\[
r_\ell=(\log\ell)^{-1/2},\qquad v_\ell=1-r_\ell,
\]

for sufficiently large \(n\) that \(r_\ell<a\). For every fixed
\(C_0<\infty\), uniformly over integers \(0\le m\le C_0\ell\),

\begin{equation}
S_{\psi_*,\chi_\alpha}(n,m)\asymp n^{-\alpha}\Phi(-\sqrt m\,v_\ell). \tag{S2.22}
\label{eq:s2-22}
\end{equation}

If additionally \(m\to\infty\), the stronger profile statement is

\begin{equation}
\frac{S_{\psi_*,\chi_\alpha}(n,m)}{n^{-\alpha}\Phi(-\sqrt m\,v_\ell)}\longrightarrow (2\alpha/e)^\alpha. \tag{S2.23}
\label{eq:s2-23}
\end{equation}

This is an exact constant for the optimized binary lower profile, not
the unknown composite minimax constant in \ref{app:s2-1}.

\textbf{Proof.} With \(q=n\psi_*(r)^2\), one has

\[
r=r(\ell-\log q),\qquad r(u)=(\log u)^{-1/2}.
\]

For \(q\in[\ell^{-2},\ell^2]\), the mean-value theorem gives

\[
\sup|r(\ell-\log q)-r_\ell|=O\{1/(\ell\sqrt{\log\ell})\}.
\]

Hence \(m\{(1-r(\ell-\log q))^2-v_\ell^2\}=o(1)\) uniformly in this
interval and the declared count range. A bounded additive tail shift and

\[
\Phi(-\sqrt{h+q})/\Phi(-\sqrt h)\le e^{-q/2}\qquad(h,q\ge0)
\]

give an integrable-in-the-supremum upper envelope
\(Cq^\alpha e^{-q/2}\). For \(q<\ell^{-2}\), \(r\le r_\ell\), so the
normalized profile is at most \(q^\alpha\le\ell^{-2\alpha}\). For
\(q>\ell^2\), bound \(H\ge q\) and use the Mills lower bound on the
denominator: the normalized profile is at most a constant times

\[
\sqrt{1+C_0\ell}\,e^{C_0\ell/2}\sup_{q\ge\ell^2}q^\alpha e^{-q/2}=o(1).
\]

On each fixed compact interval \(q\in[\epsilon,Q]\), the ratio in
\eqref{eq:s2-23} tends uniformly to \(q^\alpha e^{-q/2}\) when \(m\to\infty\).
The preceding envelopes let \(\epsilon\downarrow0\),
\(Q\uparrow\infty\). The maximum is attained at \(q=2\alpha\) and equals
\((2\alpha/e)^\alpha\). The same estimates and one fixed positive \(q\)
give the uniform comparison \eqref{eq:s2-22}, including bounded \(m\).

\ref{app:s2-1} and \eqref{eq:s2-22} imply, in the logarithmic range,

\begin{equation}
R_\alpha(n,m)\asymp n^{-1}+n^{-\alpha}(1+m)^{-1/2}\exp\{-m v_\ell^2/2\}. \tag{S2.24}
\end{equation}

Therefore the bounded acquisition window for regular \(n^{-1}\) risk is

\begin{equation}
m=\frac{2(1-\alpha)\ell-\log\ell}{(1-r_\ell)^2}+O(1). \tag{S2.25}
\end{equation}

More precisely, \(R_\alpha(n,m)=O(n^{-1})\) in this range if and only if

\[
\tfrac12 m v_\ell^2+\tfrac12\log(1+m)\ge(1-\alpha)\ell-O(1).
\]

The equivalence of this inequality to the window follows because its
critical \(m\) is proportional to \(\ell\), with a coefficient tending
to \(2(1-\alpha)>0\); replacing \(\log(1+m)\) by \(\log\ell\) changes
only a bounded term. For \(m/\log n\to\kappa\ge0\), the squared-risk
exponent is \(\min\{1,\alpha+\kappa/2\}\).

The factor \((1-r_\ell)^{-2}\) multiplies the discrimination requirement
after target coalescence has reduced it from \(\ell\) to
\((1-\alpha)\ell\). It cannot be applied to a fixed-target requirement
and then corrected by subtracting \(2\alpha\ell\) raw
observations:
that prescription oversamples by

\begin{equation}
2\alpha\ell\{(1-r_\ell)^{-2}-1\}\sim4\alpha\ell r_\ell\longrightarrow\infty. \tag{S2.26}
\end{equation}

This acquisition law couples target coalescence and source
discrimination through the same parameter, retaining both effects in the
observation requirement.

\subsection{A slowly shrinking target gap and an unbounded smaller cross term}
\label{app:s2-4}

Keep \(\psi_*\) but set

\[
\chi_{\mathrm{slow}}(r)=e^{-1/r^2}\quad(r\ne0),\qquad\chi_{\mathrm{slow}}(0)=0.
\]

This is again a fixed even smooth flat function, so all source jets at
\(r=0\) remain the same as in \ref{app:s2-2}. Uniformly for \(0\le m\le C_0\ell\),

\begin{equation}
S_{\psi_*,\chi_{\mathrm{slow}}}(n,m)\sim\ell^{-2}\Phi(-\sqrt m\,v_\ell). \tag{S2.27}
\label{eq:s2-27}
\end{equation}

\textbf{Proof.} Let \(r_- =r(\ell+2\log\ell)\) and
\(r_+=r(\ell-2\log\ell)\). At \(r_-\) the core information is
\(\ell^{-2}\), the target squared gap is
\((\ell+2\log\ell)^{-2}\sim\ell^{-2}\), and the resolver information
differs from \(m v_\ell^2\) by \(o(1)\). The normal-tail ratio for an
\(o(1)\) additive squared-argument change tends to one uniformly,
yielding the lower bound in \eqref{eq:s2-27}. On \(r\le r_+\), the target squared
gap is at most \((\ell-2\log\ell)^{-2}\sim\ell^{-2}\), and the resolver
gap is at least \(m(1-r_+)^2=m v_\ell^2+o(1)\). Dropping the nonnegative
core information gives the matching upper bound. On \(r>r_+\) the core
information exceeds \(\ell^2\); the Gaussian tail there is negligible
compared with \(\ell^{-2}\Phi(-\sqrt m\,v_\ell)\) for the declared count
range. This proves \eqref{eq:s2-27}.

Consequently the core-only risk has order \((\log n)^{-2}\) despite
immersion and target identification. Its regular acquisition window is

\begin{equation}
m=\frac{2\ell-5\log\ell}{(1-r_\ell)^2}+O(1). \tag{S2.28}
\end{equation}

Subtracting the coalescence saving \(4\log\ell\) from the fixed-target
flat window \((2\ell-\log\ell)/(1-r_\ell)^2\) again fails at bounded
precision: it oversamples by

\begin{equation}
4\log\ell\{(1-r_\ell)^{-2}-1\}\sim8\sqrt{\log\ell}\longrightarrow\infty. \tag{S2.29}
\end{equation}

This second example checks that the coupling is not restricted to a
leading polynomial exponent. Here both source-gap and target-gap germs
are flat; their relative scale creates a logarithmic base risk and an
unbounded subleading resource correction.

\subsection{Polynomial-exponent blindness within every fixed nonanalytic Gevrey class}
\label{app:s2-5}

Fix any Gevrey order \(s>1\) and choose a fixed real \(p\ge1/(s-1)\). In
the same full box and the same five-coordinate core and two-coordinate
resolver of \ref{app:s2-1}, put

\[
\psi_p(r)=\exp(-|r|^{-p}),\qquad
\chi_{p,\alpha}(r)=\exp(-\alpha|r|^{-p}),\qquad0<\alpha<1,
\]

with both values defined to be zero at \(r=0\). Then both mean maps are
in the fixed Gevrey class \(G^s\) on a neighborhood of \(K\). Throughout
this family, all labelled exact fibre relations agree, all source jets
along \(r=0\) agree, the core has full derivative rank everywhere, and
its Fisher matrix has a positive minimum eigenvalue on \(K\) for each
fixed \(\alpha\).
Nevertheless,

\begin{equation}
R_{p,\alpha}(n,0)\asymp n^{-\alpha}. \tag{S2.30}
\label{eq:s2-30}
\end{equation}

Consequently polynomial-exponent blindness of the critical formal jets
persists inside every prescribed Gevrey class of order strictly greater
than one. Neither source number, block dimension, parameter dimension,
domain nor target is changed. Constants are for each fixed \(\alpha\);
no uniformity as \(\alpha\downarrow0\) is asserted.

\textbf{Complete derivative bound.} For \(c>0\), consider
\(f_c(x)=e^{-c x^{-p}}\) on \(x>0\). Choose a fixed
\(0<\varepsilon<1/2\), depending only on \(p\), so small that

\[
\operatorname{Re}(1+w)^{-p}\ge1/2\quad\text{for }|w|\le\varepsilon.
\]

Such a choice exists by continuity, using the analytic branch
\((1+w)^{-p}=\exp\{-p\operatorname{Log}(1+w)\}\) in \(|w|<1\). On the
complex disk \(|z-x|\le\varepsilon x\), the function \(e^{-c z^{-p}}\)
is therefore analytic and bounded in modulus by \(e^{-(c/2)x^{-p}}\).
Cauchy's derivative estimate gives, for every integer \(k\ge1\),

\[
|f_c^{(k)}(x)|\le k!(\varepsilon x)^{-k}e^{-(c/2)x^{-p}}.
\]

Putting \(u=x^{-p}\), the maximum of \(u^{k/p}e^{-cu/2}\) over \(u>0\)
is \((2k/(cp e))^{k/p}\). Since \((k/e)^k\le k!\),

\begin{equation}
|f_c^{(k)}(x)|\le
\left\{\varepsilon^{-1}(2/(cp))^{1/p}\right\}^{k}(k!)^{1+1/p}. \tag{S2.31}
\label{eq:s2-31}
\end{equation}

For each fixed \(k\), the unoptimized bound tends to zero as
\(x\downarrow0\). Hence the even extension \(f_c(|r|)\), given value
zero at the origin, is \(C^\infty\) with every derivative zero there; on
the negative half-line the derivative magnitudes are the same. The
order-zero bound is at most one. Equation \eqref{eq:s2-31} proves membership in
\(G^{1+1/p}\) and therefore in \(G^s\), since \(1+1/p\le s\). Apply it
with \(c=1\) and \(c=\alpha\). Multiplying these functions by the
polynomial coordinates \(b\) and adding the other fixed polynomial
coordinates preserves the same Gevrey order. More explicitly, mixed
derivatives involving the nonpolynomial terms contain at most one
derivative in \(b\) and a derivative in \(r\), so \eqref{eq:s2-31} immediately
bounds them by \(CA^{|\beta|}(|\beta|!)^s\) on a compact neighborhood of
\(K\).

\textbf{Statistical proof.} These functions satisfy every assumption of
\ref{app:s2-1}, and \(\chi_{p,\alpha}^2=(\psi_p^2)^\alpha\). The same exact change
of variable \(q=n\psi_p(r)^2\) gives

\[
S_{\psi_p,\chi_{p,\alpha}}(n,0)
=n^{-\alpha}\sup_{0\le q\le n\psi_p(a)^2}q^\alpha\Phi(-\sqrt q)
\asymp n^{-\alpha}.
\]

Combining with \ref{app:s2-1} proves \eqref{eq:s2-30}, because \(\alpha<1\). The fibre,
rank and jet statements follow directly from the already proved \ref{app:s2-1}
geometry and the flat derivative limits above. The conclusion follows
from the same proved risk comparison.

\textbf{Precise quasianalytic boundary of this construction.} In a
quasianalytic function class, the Taylor map on germs is injective by
definition: two germs in the class with identical derivatives at a point
coincide on a neighborhood of that point. Thus the distinct source germs
used here, which differ arbitrarily close to \(r=0\) while having
identical Taylor series, cannot all belong to one quasianalytic class.
This excludes this particular equal-jet construction there. It does not
establish a general risk classification for quasianalytic models, a
universal finite-order contact theorem, or a sufficient criterion for
global regular minimax risk.

\newpage

\section{Finite certified estimation and precision transfer}
\label{app:s3}

\subsection{Experiment, target and precision}
\label{app:s3-1}

Fix \(0<a<1/2\), and let

\[
K=[-a,a]\times[-2,2]\times[-2,2],\qquad \theta=(r,b,t),\qquad T(\theta)=t.
\]

The functions \(\psi,\chi\) are fixed even \(C^2\) functions on an open
neighborhood of \([-a,a]\). Assume \(\psi\ge0\) on that
neighborhood,
\(\psi(0)=\psi'(0)=0\), \(\psi(r)>0\) for \(r\ne0\), \(0\le\chi\le1\) on
the parameter interval, and \(\chi(0)=0\). Their values and derivatives,
including the size of a nonnegative extension neighborhood for \(\psi\),
are allowed to enter comparison constants. No common constant over an
unrestricted collection of such functions is asserted.

Observe \(n\) independent \(N_5(F(\theta),I_5)\) blocks and \(m\)
independent \(N_2(G(\theta),I_2)\) blocks, where

\[
F(r,b,t)=(r,t-b\chi(r),b^2,b^3-b,b\psi(r)),
\qquad G(r,b,t)=(r,b(1-r)).
\]

All coordinates and blocks have independent unit-variance real Gaussian
noises; all sources share the same parameter. Counts \(n,m\) are
nonnegative integers, and the scalar-observation count is \(5n+2m\).
Define

\[
\mathcal R(n,m)=\inf_{\widehat t}\sup_{\theta\in K}
\mathbb E_\theta(\widehat t-t)^2,
\qquad H_{n,m}(r)=n\psi(r)^2+m(1-r)^2,
\]
 
\[
S(n,m)=\sup_{0\le r\le a}\chi(r)^2\Phi(-\sqrt{H_{n,m}(r)}),
\]

where \(\Phi\) is the standard real normal distribution function. The
infimum is over measurable estimators based on the observations. The
result is a uniform comparison, with constants independent of both
counts:

\begin{equation}
\quad \mathcal R(n,m)\asymp (1+n)^{-1}+S(n,m).\quad \tag{S3.1}
\label{eq:s3-1}
\end{equation}

The construction below gives the upper bound using finite rational
arithmetic and a declared absolute-accuracy function oracle. For
explicit flat families, rational function enclosures implement that
oracle.

\subsection{Independent lower bounds and exact geometry}
\label{app:s3-2}

For each \(r\in[0,a]\), the actual full-box points 
\[
\theta_+=(r,1,\chi(r)),\qquad\theta_-=(r,-1,-\chi(r))
\]
 have target gap \(2\chi(r)\). Their first four core coordinates
coincide; their remaining differences are \(2\psi(r)\) and \(2(1-r)\).
Thus the squared whitened Gaussian mean distance is \(4H_{n,m}(r)\), the
real Gaussian Kullback--Leibler divergence is \(2H_{n,m}(r)\), and their
equal-prior testing error is exactly \(\Phi(-\sqrt{H_{n,m}(r)})\).
Turning any target estimator into the nearest-target classifier shows 
\[
\mathcal R(n,m)\ge S(n,m).
\]
 This argument optimizes over actual common-parameter pairs. There is
no sourcewise duplication of the nuisance.

For the independent floor, use \((r,b,t)=(0,0,\pm c/\sqrt{1+n})\), with
fixed small \(c>0\). Only the second core coordinate changes, and the
resolver is exactly invariant. The divergence is bounded for every
allocation, while squared target separation has order \((1+n)^{-1}\).
Therefore 
\[
\mathcal R(n,m)\ge c_0(1+n)^{-1}.
\]
 The maximum of these two bounds is at least one half of their sum,
after adjusting a fixed constant.

The core derivative has rank three everywhere: its first coordinate
isolates the \(r\) derivative, its second isolates the \(t\) derivative,
and the \(b\) derivative in coordinates three and four is
\((2b,3b^2-1)\), which is never zero. Equality of core values implies
equality of \(r\), then \(b'=b\) or \(b'=-b\). The latter possibility
with distinct points requires \(b=\pm1\), then \(\psi(r)=0\), hence
\(r=0\). Since \(\chi(0)=0\), the targets agree. Precisely the
nontrivial fibres are 
\[
\{(0,1,t),(0,-1,t)\},\qquad -2\le t\le2.
\]
 Thus the core identifies the target globally, even at its nontrivial
nuisance fibres. This geometric statement does not itself give the
global risk rate.

Write \(C(b)=(b^2,b^3-b)\), \(d=b-b'\), and \(s=b+b'\). Factoring each
difference gives the exact identity 
\begin{equation}
\|C(b)-C(b')\|^2
=d^2\left[s^2+\frac{(3s^2+d^2-4)^2}{16}\right]. \tag{S3.2}
\label{eq:s3-2}
\end{equation}
 For \(|d|\le1\), the bracket is at least \(1/16\). Indeed, either
\(s^2\ge1/16\), or \(3s^2+d^2-4\le-45/16\), in which case its squared
contribution alone exceeds \(1/16\). Therefore 
\begin{equation}
|b-b'|\le4\|C(b)-C(b')\|\qquad (|b-b'|\le1). \tag{S3.3}
\label{eq:s3-3}
\end{equation}
 If instead \(|d|>1\) and the norm in \eqref{eq:s3-2} is at most
\(\epsilon\le1\), then \(|s|\le\epsilon\) and 
\[
|d^2-4|\le4\epsilon+3\epsilon^2\le7\epsilon,
\qquad \bigl||d|-2\bigr|\le7\epsilon/3.
\]
 Consequently,
with \(e=\operatorname{sign}(d)\), 
\begin{equation}
|b-e|\le2\epsilon,\qquad |b'+e|\le2\epsilon. \tag{S3.4}
\label{eq:s3-4}
\end{equation}
 These estimates hold on all of \([-2,2]\), including both portions
outside \([-1,1]\). They supply the
local-fit/opposite-branch
dichotomy needed below.

\subsection{The finite polynomial computation}
\label{app:s3-3}

For background on certified real-root isolation and its computational
complexity, see Sagraloff and Mehlhorn \cite{Sagraloff2016}. The fixed-degree
Sturm construction used here is specified below.

For observed or rounded values \(U,V\), put 
\[
P_{U,V}(b)=(b^2-U)^2+(b^3-b-V)^2.
\]
 Direct
differentiation,
with every term retained, gives 
\begin{equation}
\begin{aligned}
\tfrac12 P'_{U,V}(b)
&=2b(b^2-U)+(3b^2-1)(b^3-b-V)\\
&=3b^5-2b^3-3Vb^2+(1-2U)b+V.
\end{aligned} \tag{S3.5}
\label{eq:s3-5}
\end{equation}
 The leading coefficient is always three. Thus this stationary
polynomial is never identically zero. Every global minimum of
\(P_{U,V}\) on the compact interval \([-2,2]\) is either an endpoint or
a real root of \eqref{eq:s3-5} in the interval. There are at most seven distinct
candidate points, including endpoints. Repeated stationary roots are
retained as points rather than discarded because they do not change a
derivative sign. Evaluating all candidates and choosing the smallest
\(b\) among exact minimum ties specifies an exact-real global minimizer.
A local optimizer or a sign-change-only search does not supply this
certificate.

For rational \(U,V\), a fully rational approximation avoids
algebraic-value comparison. Let \(\tau>0\) be the required objective
tolerance. On \([-2,2]\), \(\|C(b)\|<8\) and \(\|C'(b)\|<12\), so 
\begin{equation}
|P'_{U,V}(b)|\le M_{U,V},\qquad
M_{U,V}=24(8+|U|+|V|). \tag{S3.6}
\label{eq:s3-6}
\end{equation}
 Remove repeated factors by dividing \eqref{eq:s3-5} by its greatest common
divisor with its derivative. Use its square-free Sturm sequence to
isolate every distinct real root into a rational interval of width at
most \(\tau/M_{U,V}\). Exact rational roots encountered at endpoints or
bisection points are recorded and deflated; counts are only applied on
intervals whose endpoints are not roots. Include the two original
endpoints, evaluate \(P_{U,V}\) exactly at every root-interval midpoint
and endpoint, and choose the least rational candidate among minimum
objective ties. The returned rational number \(\widetilde b\) satisfies

\begin{equation}
P_{U,V}(\widetilde b)
\le\min_{[-2,2]}P_{U,V}+\tau. \tag{S3.7}
\label{eq:s3-7}
\end{equation}
 To see this, if a global minimum is stationary, one midpoint is
within the certified interval width of it, and \eqref{eq:s3-6} bounds the
objective increase by \(\tau\). Endpoint minima are included exactly.
Taking the least objective over all candidates can only improve this
bound.

Repeated roots, objective ties and root separation do not jeopardize
termination. The square-free polynomial has finitely many distinct
separated real roots. Each bisection eventually isolates one root or
proves a root count of zero, and at most five exact deflation restarts
are possible. A certificate records the stationary polynomial, repeated
factor, root intervals, interval widths, objective values, selected
point and the Lipschitz bound in \eqref{eq:s3-6}. Verification recomputes the
root counts and every rational objective comparison.

\subsection{Explicit tolerances and estimator}
\label{app:s3-4}

Return zero when \(n\le1\). The bounded loss handles these counts. For
\(n\ge2\), write \(N=n+m\). Let \(\bar R\) be the pooled
first-coordinate sample mean from all \(N\) blocks; it is
\(N(r,N^{-1})\). Denote the other core means by
\(\bar X,\bar U,\bar V,\bar Y\), in their source order, and the second
resolver mean by \(\bar B\) when \(m>0\). Their noises are mutually
independent and independent of \(\bar R\).

The following absolute tolerances are sufficient:

\begin{center}
\small
\begin{tabular}{@{}lr@{}}
\toprule
Quantity & Certified absolute error or objective tolerance \\
\midrule
Rounded \(\bar X,\bar U,\bar V\) & \((64n)^{-1}\) each \\
Rounded \(\bar R,\bar Y,\bar B\) & \((64N)^{-1}\) each \\
Nuisance objective tolerance \(\tau\) & \((64n^2)^{-1}\) \\
\(\psi\) evaluation at the rounded, clipped radius & \((64N)^{-1}\) \\
\(\chi\) evaluation at that radius & \((64n)^{-1}\) \\
Magnitude square-root evaluation & \((64n)^{-1}\) \\
Optional extra normalized score arithmetic error & \(N^{-1/2}\) \\
\bottomrule
\end{tabular}
\end{center}

The last row is unnecessary in the rational-arithmetic
construction,
whose unnormalized score is evaluated exactly. It states a tolerance for
other arithmetic procedures. These are absolute-error requirements. They
do not demand a relative approximation of an arbitrarily small flat
function. The construction reserves half of each sample-mean budget for
representing the real input by a rational enclosure midpoint and the
other half for its internal dyadic rounding; the table gives the
combined error. An enclosure midpoint can be produced without deciding
whether an arbitrary real observation is exactly on a rounding boundary.

The representation of the pooled radius must be a deterministic
measurable function of the first-coordinate observations alone.
Clipping, inward endpoint
approximation,
and the \(\psi\)-weight oracle must then use only that represented
radius, the known counts, and the fixed known functions. They may not
inspect the score coordinates \(\bar Y,\bar B\) or their noises.
Coordinatewise representation and rounding meet this contract. It
ensures that conditioning on the represented radius leaves the
score-coordinate noises independent standard Gaussians, as required in
Section \ref{app:s3-6}. A bound on representation error alone would not justify
that conditional independence if an arbitrary data-dependent
representation could inspect other coordinates.

Use deterministic dyadic rounding within the displayed tolerances,
denoting rounded means by \(X^\#,U^\#,V^\#,Y^\#,B^\#,R^\#\). Set 
\[
r^\#=\operatorname{clip}_{[-a,a]}R^\#,
\qquad \widetilde b=\text{the finite fit (S3.7) for }U^\#,V^\#.
\]
 Because \(\sqrt{x+\tau}\le\sqrt{x}+\sqrt\tau\), the objective
certificate and triangle inequality imply 
\begin{equation}
\|C(\widetilde b)-C(b)\|
\le2\|(\bar U,\bar V)-C(b)\|
 +2\sqrt2/(64n)+1/(8n)
\le2\|(\bar U,\bar V)-C(b)\|+1/n. \tag{S3.8}
\label{eq:s3-8}
\end{equation}

For a rational screening threshold, let \(j_n=\lceil\log_2 n\rceil\) and
compute a dyadic upper square-root approximation \(\delta_n\) to
\(\sqrt{16j_n/n}\), with additive error at most \((64n)^{-1}\). Then 
\[
\delta_n\ge\sqrt{16\log n/n},\qquad
\delta_n=O(\sqrt{\log n/n}).
\]
 If \(\operatorname{dist}(\widetilde b,\{-1,1\})>8\delta_n\), take
\(\widehat b=\widetilde b\). Otherwise compute 
\[
q^\#\approx\sqrt{\operatorname{clip}_{[1/4,4]}U^\#},\quad
p^\#\approx\psi(r^\#),\quad c^\#=1-r^\#,
\]
 at the declared precisions, keeping \(p^\#\ge0\) and
\(q^\#\in[1/2,2]\). Define 
\[
D^\#=n(p^\#)^2+m(c^\#)^2.
\]
 If \((D^\#)^2\le N^{-1}\), take the branch sign to be \(+1\). This
deliberately permits a bounded-risk decision when the score weights are
too small to require discrimination. Otherwise use the exact rational
score 
\[
A^\#=np^\#Y^\#+mc^\#B^\#,
\qquad \operatorname{sign}(0)=1,
\]
 omitting the resolver term when \(m=0\), and set
\(\widehat b=q^\#\operatorname{sign}(A^\#)\). Finally obtain
\(h^\#\approx\chi(r^\#)\), clipped into \([0,1]\), and return 
\begin{equation}
\widehat t=\operatorname{clip}_{[-2,2]}\{X^\#+\widehat b h^\#\}. \tag{S3.9}
\end{equation}
 Every root operation is finite, every tie has a convention, and every
clipping operation is explicit. With a deterministic measurable value
oracle, all operations define a measurable estimator. In the explicit
families below the oracle is constructed using finite rational
enclosures, so no such measurability assumption is hidden in a
nonconstructive function evaluation.

\subsection{Regular error and branch localization}
\label{app:s3-5}

On 
\[
\mathcal E_C=\{\|(\bar U,\bar V)-C(b)\|\le\delta_n\},
\]
 the norm in \eqref{eq:s3-8} is at most \(3\delta_n\), for sufficiently large
\(n\). Moreover 
\[
\Pr(\mathcal E_C^c)
=\exp(-n\delta_n^2/2)\le n^{-8},
\]
 because the squared norm of two independent standard Gaussian
coordinates has the \(\chi^2_2\) distribution. All sufficiently large
thresholds in this proof may depend on the fixed functions; bounded
smaller \(n\) are absorbed by the bounded target loss.

On a trusted local fit, \eqref{eq:s3-4} excludes \(|\widetilde b-b|>1\): it would
put \(\widetilde b\) within \(6\delta_n\) of one of \(\pm1\),
contradicting the screening decision. Equation \eqref{eq:s3-3}, followed by
\eqref{eq:s3-8}, therefore bounds its squared error by a constant times the
actual squared Gaussian noise plus \(n^{-2}\). Its expectation is
\(O(n^{-1})\). The larger screening radius is not used as the final
estimation error.

On the remote branch, \(\widetilde b\) is within \(8\delta_n\) of
\(\pm1\); \eqref{eq:s3-8} and its first coordinate imply 
\[
\bigl||b|-1\bigr|\le C\delta_n.
\]
 In particular, for large enough \(n\), \(b^2\in[1/4,4]\). Clipping is
contractive and the square root is Lipschitz on that fixed interval,
hence 
\begin{equation}
|q^\#-|b||\le C|\bar U-b^2|+C/n. \tag{S3.10}
\end{equation}
 Its squared expectation is again \(O(n^{-1})\).

Clipping the final target estimate cannot increase loss. The target
error can be decomposed into the error of \(X^\#\), the local or
magnitude error multiplied by \(\chi(r)\), the branch-sign error
multiplied by at most \(4\chi(r)\), and the radius and
\(\chi\)-evaluation errors. Since \(\chi\) is bounded and Lipschitz,
\(|\widehat b|\le2\), and 
\[
\mathbb E|r^\#-r|^2\le C/N,
\]
 this yields 
\begin{equation}
\mathbb E_\theta(\widehat t-t)^2
\le C/n+C\chi(r)^2
\Pr_\theta\{\text{wrong remote sign on the good events}\}. \tag{S3.11}
\label{eq:s3-11}
\end{equation}
 The exceptional-event probabilities are included in \(C/n\). This is
the step that excludes an unnecessary \(\log n/n\) term.

\subsection{Finite-precision weights preserve the statistically consequential tail}
\label{app:s3-6}

A fixed nonnegative \(C^2\) extension gives the Glaeser inequality \cite{Glaeser1963}
\(|\psi'(r)|^2\le C\psi(r)\) on the compact parameter interval. Here is
an explicit verification. Choose \(h_0>0\) so that every displacement of
magnitude at most \(h_0\) from the interval stays in a compact
nonnegative extension neighborhood. Let \(M\ge1\) bound \(|\psi''|\)
there and let \(D_{\max}\) bound \(|\psi'|\). Put \(D=\psi'(r)\). If
\(|D|\le Mh_0\), Taylor's inequality at displacement \(-D/M\), together
with
nonnegativity,
gives \(0\le\psi(r)-D^2/(2M)\). If \(|D|>Mh_0\), use displacement
\(-h_0\operatorname{sign}D\); it gives
\(\psi(r)\ge |D|h_0-Mh_0^2/2\ge |D|h_0/2\), hence
\(D^2\le(2D_{\max}/h_0)\psi(r)\). These two cases prove the inequality
with a fixed constant. Taylor expansion and bounded second derivative
consequently give 
\begin{equation}
|\psi(s)-\psi(r)|\le C\sqrt{\psi(r)}|s-r|+C|s-r|^2. \tag{S3.12}
\label{eq:s3-12}
\end{equation}

Consider the independent radius event 
\[
\mathcal E_R=
\{|\bar R-r|\le\sqrt{16\log n/N}\},
\qquad \Pr(\mathcal E_R^c)\le2n^{-8}.
\]
 Let 
\[
v=(\sqrt n\psi(r),\sqrt m(1-r)),
\qquad w=(\sqrt n p^\#,\sqrt m c^\#),
\qquad H=\|v\|^2,
\]
 with zero-count components omitted. The rounding errors and \eqref{eq:s3-12}
give, uniformly over all allocations on \(\mathcal E_R\), 
\begin{equation}
E:=\|w-v\|^2
\le C\left\{
\frac{\log n}{\sqrt N}\sqrt H+
\frac{H\log n}{N}+
\frac{\log^2 n+1}{N}\right\}. \tag{S3.13}
\label{eq:s3-13}
\end{equation}
 For clarity, the terms before simplification are bounded by 
\[
C\{n\psi(r)d^2+nd^4+md^2+nN^{-2}\},
\qquad d=|r^\#-r|\le C\sqrt{\log n/N}.
\]
 Use \(n\psi(r)\le\sqrt{nH}\), \(m\le H/(1-a)^2\) and \(n\le N\) to
obtain \eqref{eq:s3-13}. The oracle contributes only \(nN^{-2}\). No relative
error of \(\psi\) appears.

If the small-score fallback is used, \(\|w\|\le N^{-1/4}\). On
\(\mathcal E_R\), this forces \(H<1\) for sufficiently large \(n\).
Otherwise \eqref{eq:s3-13} gives \(E/H=o(1)\) uniformly for \(H\ge1\), so
\(\|w\|\ge\sqrt H-\sqrt E\ge1/2\), a contradiction. In the fallback
region, therefore, 
\begin{equation}
\chi(r)^2\Pr\{\text{wrong sign}\}
\le\chi(r)^2
\le\Phi(-1)^{-1}\chi(r)^2\Phi(-\sqrt H). \tag{S3.14}
\label{eq:s3-14}
\end{equation}

For a nonzero weight vector, first use the unrounded score means.
Conditional on the rounded radius, the normalized score is a
unit-variance real normal variable, with signed mean 
\[
\rho=|b|\frac{w\cdot v}{\|w\|}\ge0.
\]
 All components are nonnegative, since the oracle output has been
clipped nonnegative and \(1-r^\#>0\). Orthogonal projection onto the
line spanned by \(w\) gives 
\begin{equation}
\rho^2
=b^2\frac{(w\cdot v)^2}{\|w\|^2}
\ge b^2(H-E). \tag{S3.15}
\end{equation}
 This inequality remains true if its right side is negative. Rounding
the score data changes its normalized value by at most 
\begin{equation}
\frac{n p^\#|Y^\#-\bar Y|
+m c^\#|B^\#-\bar B|}{\|w\|}
\le\frac1{64\sqrt N}. \tag{S3.16}
\label{eq:s3-16}
\end{equation}
 The bound follows from Cauchy--Schwarz and does not deteriorate when
\(\|w\|\) is small. Additional arithmetic error bounded by \(N^{-1/2}\)
merely changes the constant. Thus the computed wrong-sign probability is
bounded above by \(\Phi(-\rho+C/\sqrt N)\).

Fix a large constant \(K_0\). On the remote branch and the good events,
\(b^2=1+O(\delta_n)\). For \(H\le K_0\log n\), \eqref{eq:s3-13} implies
\(E=o(1)\), and \(\delta_nH=o(1)\). Therefore 
\begin{equation}
\rho^2\ge H-o(1),\qquad
(\rho-C/\sqrt N)_+^2\ge H-o(1). \tag{S3.17}
\end{equation}
 The notation is uniform in the parameter and all resolver allocations
in this region. The elementary Mills bounds 
\[
\Phi(-\sqrt x)\asymp (1+x)^{-1/2}e^{-x/2},\qquad x\ge0,
\]
 show that a bounded additive change of \(x\) changes this tail by at
most a fixed factor. Hence the computed wrong-sign probability is at
most \(C\Phi(-\sqrt H)\). More precisely, if \(H\to\infty\) within this
region, then \(\rho^2\le b^2H=H+o(1)\) as well; sandwiching the rounding
error by both signs in \eqref{eq:s3-16} gives conditional error divided by
\(\Phi(-\sqrt H)\) tending to one. This is a conditional branch-testing
statement, not a ratio-one assertion for the full minimax risk.

For \(H>K_0\log n\), \eqref{eq:s3-13} gives \(E/H=o(1)\) uniformly and
\(b^2=1+o(1)\). Thus, for all sufficiently large \(n\),
\(\rho\ge\sqrt{H/2}\), and the additional \(O(N^{-1/2})\) threshold
shift is negligible. Increasing \(K_0\), if necessary, makes the
wrong-sign probability at most \(n^{-3}\). This contribution is absorbed
by the regular floor. The argument covers arbitrarily large \(m\); no
subsampling or uncounted radius observations are needed.

Combining \eqref{eq:s3-11}, \eqref{eq:s3-14}, and the two information regions gives 
\[
\sup_{\theta\in K}\mathbb E_\theta(\widehat t-t)^2
\le C/n+C\sup_{r\in[-a,a]}\chi(r)^2\Phi(-\sqrt{H_{n,m}(r)}).
\]
 Evenness of the fixed functions and \((1-r)^2\le(1+r)^2\) for
\(r\ge0\) reduce the supremum exactly to \([0,a]\). This proves the
upper bound in \eqref{eq:s3-1}, including all counts after the bounded-count
convention.

The safe normalized score tolerance is consequently far more permissive
than a relative-accuracy requirement on a flat weight: for a
bounded-factor tail comparison in the consequential region, any extra
deterministic normalized score error \(\zeta_n\) with
\(\zeta_n\sqrt{\log n}=O(1)\) suffices. The stronger condition
\(\zeta_n\sqrt{\log n}=o(1)\) retains the conditional ratio-one tail
there. The specified \(O(N^{-1/2})\) bound satisfies the stronger
condition. These observations concern additional score arithmetic after
the weight bounds already proved above.

This dependence cannot be replaced by an unrestricted assertion that a
small absolute score error is harmless. For the simple normal decision
with signed mean \(\sqrt H\), an adverse threshold displacement
\(\zeta>0\) changes the error from \(\Phi(-\sqrt H)\) to
\(\Phi(-\sqrt H+\zeta)\). When \(H\to\infty\) and \(\zeta=o(\sqrt H)\),
Mills' ratio gives 
\begin{equation}
\frac{\Phi(-\sqrt H+\zeta)}{\Phi(-\sqrt H)}
=(1+o(1))\frac{\sqrt H}{\sqrt H-\zeta}
\exp\{\zeta\sqrt H-\zeta^2/2\}. \tag{S3.18}
\end{equation}
 Thus \(\zeta\sqrt H\to\infty\) produces an unbounded error inflation
even if \(\zeta\to0\). This is an exact binary precision obstruction. It
establishes the necessity of controlling the score-error product at the
tail resolution being used, without claiming that every full-estimator
construction requires the conservative table entries above.

\subsection{Computational model and explicit flat-family oracle}
\label{app:s3-7}

For arbitrary fixed \(C^2\) functions, finite computability is an
additional assumption: smoothness alone supplies no algorithm for
evaluating a function. The oracle formulation asks for deterministic
measurable absolute-error enclosures of \(\psi(x)\) and \(\chi(x)\) at a
represented radius, and an effective representation of the fixed
endpoint \(a\). An inward rational approximation of \(a\) with error
\(O(N^{-1})\) can be used for clipping; it adds only the same order to
the radius error already allowed above. The explicit construction takes
rational \(a\). The model also assumes that sufficient sample means can
be obtained to the stated absolute errors. Real Gaussian observations
are mathematically real numbers; exact access to their entire expansions
is neither assumed nor required. Rounding each raw observation to the
desired mean tolerance, followed by exact summation, is one finite input
model. The count of raw observations must still be paid, even though the
estimation stage uses only sufficient means.

For rational \(a\), positive integer \(p\), and positive rational
coefficients,
the following construction supplies enclosures for 
\[
f_c(r)=\exp(-c|r|^{-p}),
\qquad g_c(r)=\exp\{-c\exp(1/r^2)\},
\]
 with value zero at \(r=0\). The Gevrey family uses \(\psi=f_1\),
\(\chi=f_\alpha\); the double-exponential family uses \(\psi=g_{1/2}\),
\(\chi=g_{\alpha/2}\). The latter evaluates \(\chi\) directly, avoiding
the numerical operation of raising an underflowed \(\psi\) to a power.
Any fixed Gevrey order greater than one is covered by choosing an
integer \(p\) large enough. Irrational family constants require their
own effective representation; they are not silently treated as exact
rational input.

Here is the enclosure argument. For rational \(x\ge0\), the positive
exponential Taylor sum through degree \(k\) has next term
\(a_{k+1}=x^{k+1}/(k+1)!\). If \(x/(k+2)<1\), its remaining tail is
bounded by 
\[
\frac{a_{k+1}}{1-x/(k+2)}.
\]
 All quantities are rational. Increasing \(k\) terminates at every
prescribed positive accuracy. Inversion of a positive exponential
enclosure gives an enclosure of \(e^{-x}\), because the positive
exponential is at least one. If accuracy \(2^{-P}\) is requested and
\(x\ge P\), the interval \([0,2^{-P}]\) already encloses \(e^{-x}\),
since \(e>2\).

For \(g_c\), set \(x=1/r^2\). If \(x\ge s\), where the nonnegative
integer \(s\) satisfies \(2^s\ge P/c\), then \(ce^x\ge P\), and the same
small-value enclosure applies before forming the inner exponential.
Otherwise \(x=O(\log P)\), for fixed \(c\). Enclose \(e^x\) rationally,
multiply by \(c\), and apply the outer negative-exponential bounds to
both endpoints. The derivative of \(e^{-z}\) on \(z\ge0\) has magnitude
at most one, so the inner absolute-error budget transfers directly. This
proves the finite oracle guarantee even arbitrarily near the flat point.
Correctly deciding a relative sign of an astronomically small value is
unnecessary.

The post-observation polynomial fit has a fixed degree. If the rounded
rational inputs have numerator and denominator bit lengths at most
\(B\), clearing denominators gives integer coefficients of bit length
\(O(B)\). Exact square-free reduction and Sturm preprocessing require
finitely many constant-degree rational operations with bit lengths
polynomial in \(B\). A nonzero integer discriminant of the square-free
polynomial, together with the elementary root bound \(|z|\le1+H\) for
coefficient height \(H\), bounds distinct-root separation below by
\(2^{-C B}\), for a fixed degree-dependent \(C\). Indeed the
discriminant is an integer of magnitude at least one, while all but one
root-difference factors and the leading-coefficient factor have
magnitude at most powers of \(1+H\). The same evaluation argument
separates non-root rational endpoints. Consequently isolation to \eqref{eq:s3-6}
requires depth \(O(B+\log n)\). At most five nonempty root intervals
survive each depth, so the number of bisection and sign-evaluation
stages is \(O(B+\log n)\). Exact rational arithmetic makes the bit cost
polynomial in \(B+\log n\); no specific optimal bit exponent is claimed.

The flat oracle has finite rational Taylor cost with early clipping; for
fixed represented family parameters its working accuracy is
\(P=O(\log N)\) bits. The construction does not claim optimal
exponentials,
optimal streaming arithmetic, or a run time uniform in unbounded
observed data magnitudes. Its principal certificate is the deterministic
statistical tolerance \eqref{eq:s3-7}, \eqref{eq:s3-8}, \eqref{eq:s3-13}, and \eqref{eq:s3-16}, rather than
a practical-speed benchmark. Arbitrary \(C^2\) oracles retain their
declared, possibly unbounded, evaluation costs.

The construction above is finite under the stated effective-function
assumptions. Its proof establishes the risk comparison and the
conditional rare-error accuracy directly; no numerical experiment is
needed for either conclusion.

\newpage

\section{Exact orientation calibration and uniform asymptotics}
\label{app:s4}

\subsection{Fixed experiment and exact invariant minimax reduction}
\label{app:s4-1}

Fix an integer \(d\ge1\). Let \(u\in S^{d-1}\) be a common unknown
direction and let \(b\in\{0,1\}\) be the target. For known real
\(R,k\ge0\), observe independently conditional on \((b,u)\)

\[
A\sim N_d(\sqrt{k}\,u,I_d),\qquad B\sim N_d(bRu,I_d).
\]

Here \(k\) is reference information and \(R^2\) is detection
information; integer replication of fixed unit-covariance sources is a
special case. Define

\[
e_d(R,k)=\inf_\varphi\max\left\{\sup_uP_{0,u}(\varphi=1),\sup_uP_{1,u}(\varphi=0)\right\}.
\]

Tests may initially be randomized; the optimal test below is
deterministic for \(R>0\). Let \(\mu_d\) denote normalized Haar measure
on the sphere and put

\[
Z_d(t)=\int_{S^{d-1}}e^{tu_1}\,d\mu_d(u),\qquad h=d-1.
\]

For \(d=1\), \(Z_1(t)=\cosh t\). For \(d\ge2\), the standard Bessel representation \cite[Eq.~10.32.2]{DLMF} is

\[
Z_d(t)=\Gamma(d/2)(2/t)^{d/2-1}I_{d/2-1}(t),\qquad Z_d(0)=1.
\]

The two Haar-mixture densities, relative to independent standard
Gaussian density \(\phi_d(a)\phi_d(b')\), are

\[
p_0=\phi_d(a)\phi_d(b')e^{-k/2}Z_d(\sqrt{k}\,|a|),
\]

\[
p_1=\phi_d(a)\phi_d(b')e^{-(k+R^2)/2}Z_d(|\sqrt{k}\,a+Rb'|).
\]

Consequently their exact likelihood ratio is

\begin{equation}
L_{R,k}(a,b')=e^{-R^2/2}\frac{Z_d(|\sqrt{k}\,a+Rb'|)}{Z_d(\sqrt{k}\,|a|)}. \tag{S4.1}
\label{eq:s4-1}
\end{equation}

\begin{theorem}
For \(R>0\), a likelihood-ratio threshold
\(\varphi=1\{L_{R,k}>\tau_{R,k}\}\) with equalized mixture errors is
minimax for the full nuisance experiment. Its two errors are the same
for every \(u\) in their respective classes. Such an equalizing
threshold exists. For \(R=0\), \(e_d(0,k)=1/2\).
\end{theorem}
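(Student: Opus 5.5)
We prove the statement by the standard invariance-plus-least-favourable-prior argument. The experiment is invariant under the orthogonal group acting by \((a,b')\mapsto(Oa,Ob')\) and \(u\mapsto Ou\), with the label \(b\) fixed. The Haar mixtures \(\overline P_0=\int P_{0,u}\,d\mu_d(u)\) and \(\overline P_1=\int P_{1,u}\,d\mu_d(u)\) have the densities \(p_0,p_1\) displayed above, and their ratio is \eqref{eq:s4-1}.

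\textbf{Lower bound.} For any test \(\varphi\), \(\sup_uP_{b,u}(\varphi\ne b)\ge\overline P_b(\varphi\ne b)\) for each \(b\). Hence \(e_d(R,k)\ge\inf_\varphi\max\{\overline P_0(\varphi=1),\overline P_1(\varphi=0)\}\), which is the minimax error of the simple binary problem between two mixtures.

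\textbf{Upper bound.} By the Neyman--Pearson lemma, the minimax test of two simple hypotheses is a (possibly randomized) likelihood-ratio test whose two errors are equal. So \(e_d(R,k)\) is at least the common error \(\epsilon\) of the equalizing test. The statistic \(L_{R,k}\) depends only on \(|a|\) and \(|\sqrt k a+Rb'|\), both \(O\)-invariant. Therefore \(P_{b,Ou}(\varphi=1)=P_{b,u}(\varphi\circ O^{-1}=1)=P_{b,u}(\varphi=1)\), so each error is constant in \(u\) and equals its Haar average. The equalizing test thus attains \(\max_b\sup_u\) error equal to \(\epsilon\), giving equality and minimaxity for the full nuisance experiment.

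\textbf{Existence of the threshold, without randomization for \(R>0\).} Let \(g_0(\tau)=\overline P_0(L>\tau)\) and \(g_1(\tau)=\overline P_1(L\le\tau)\). These are monotone, with limits \(g_0\to1,0\) and \(g_1\to0,1\) as \(\tau\) runs over \((0,\infty)\). The main obstacle is continuity: we need \(L\) to have no atoms under \(\overline P_0\), that is, \(\Pr(L=\tau)=0\) for every \(\tau\). It suffices that each level set \(\{L=\tau\}\) has Lebesgue measure zero in \(\mathbb R^{2d}\), since both mixtures are absolutely continuous.

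\begin{itemize}
\item Fix \(a\). For \(R>0\), \(L\) is a real-analytic function of \(b'\), because \(Z_d\) is an even entire function of its argument and is therefore analytic in \(|x|^2\).
\item The map \(b'\mapsto L\) is nonconstant, since \(Z_d\) is strictly increasing on \([0,\infty)\) (for \(d\ge2\), \(Z_d\) equals \(\int e^{tu_1}d\mu_d\) with \(u_1\) nondegenerate) and \(|\sqrt k a+Rb'|\) takes all values \(\ge0\).
\item A nonconstant real-analytic function has null level sets, so Fubini gives a null level set in \(\mathbb R^{2d}\). For \(d=1\) the same argument applies with \(\cosh\).
\end{itemize}

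Then \(g_0-g_1\) is continuous and strictly decreasing across its range, so the intermediate value theorem yields \(\tau_{R,k}\) with \(g_0=g_1\). No randomization is needed, and the test is deterministic.

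\textbf{Case \(R=0\).} Here \(P_{0,u}=P_{1,u}\) for every \(u\), so every test has \(P_{0,u}(\varphi=1)+P_{1,u}(\varphi=0)=1\). Hence the maximum error is at least \(1/2\), and the fair coin attains exactly \(1/2\).
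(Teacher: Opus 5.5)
Your proof is correct and follows essentially the same route as the paper's. Both arguments reduce to the two Haar mixtures, use an equal-error likelihood-ratio test that is minimax for that simple pair and has constant errors by orthogonal invariance, obtain the equalizing threshold from the absence of atoms (null level sets of a nonconstant real-analytic ratio), and settle \(R=0\) by coincidence of the laws. The differences are cosmetic. You lower-bound the worst case by the Haar-averaged risk instead of averaging the test over the group, you supply the analyticity and Fubini details the paper leaves implicit, and you use a fair coin rather than \(1\{B_1>0\}\) at \(R=0\). Your claim that \(g_0-g_1\) is strictly decreasing is not needed, since continuity plus the limits at \(0\) and \(\infty\) already give the intermediate-value step.
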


\textbf{Proof.} Average any test over the compact orthogonal group
acting simultaneously on both vectors. Its worst risk does not increase.
The resulting test is invariant and therefore has constant error on each
of the two transitive nuisance orbits. Its risks equal the risks under
the two Haar mixtures. For the simple mixture pair, an equal-error
likelihood-ratio test is Bayes for a suitable binary prior and hence is
minimax. The ratio in \eqref{eq:s4-1} is continuous and nonconstant, and its
distribution has no atoms on its nonconstant range: it is a nonconstant
real analytic function of Gaussian
observations,
whose level sets have Lebesgue measure zero. The two errors vary
continuously from \((1,0)\) to \((0,1)\) as the threshold traverses the
ratio range. This proves existence. At \(R=0\), the distributions
coincide; the deterministic decision \(1\{B_1>0\}\) has error \(1/2\)
under every direction.

The least squared distance between the two full mean sets equals \(R^2\)
for \textbf{every} \(k\):

\[
\inf_{u,v}\{k|u-v|^2+R^2\}=R^2.
\]

The calibration effect below therefore cannot be recovered from that
scalar minimum-distance profile. The complete labelled distance arrays
change with \(k\); only their scalar minimum is held fixed in this
comparison.

\subsection{The calibration affinity}
\label{app:s4-2}

Let \(P_k=N_d(\sqrt{k}\,e_1,I_d)\) and let
\(\overline P_k=\int N_d(\sqrt{k}\,u,I_d)d\mu_d(u)\). Define their
Hellinger affinity, without squaring it, by

\[
M_d(k)=\int\sqrt{dP_k\,d\overline P_k}.
\]

Two exact formulas are

\begin{equation}
M_d(k)=e^{-k/2}\,E_{Z\sim N_d(0,I_d)}\left[e^{\sqrt{k}Z_1/2}\sqrt{Z_d(\sqrt{k}|Z|)}\right], \tag{S4.2}
\label{eq:s4-2}
\end{equation}

\begin{equation}
M_d(k)=e^{-k/2}\int_0^\infty f_{\chi_d}(q)\sqrt{Z_d(\sqrt{k}q)}Z_d(\sqrt{k}q/2)\,dq. \tag{S4.3}
\label{eq:s4-3}
\end{equation}

Write

\[
c_d=\frac{(2\pi)^{h/2}}{|S^{d-1}|}=\frac{2^{(d-3)/2}\Gamma(d/2)}{\sqrt\pi},\qquad
K_d=\frac{2^{2-d}}{\pi^{1/4}\sqrt{\Gamma(d/2)}}.
\]

Then \(M_d\) is positive, continuous and
nonincreasing,
\(M_d(0)=1\), and

\begin{equation}
M_d(k)\sim 2^{h/2}\sqrt{c_d}\,k^{-h/4}\quad(k\to\infty). \tag{S4.4}
\label{eq:s4-4}
\end{equation}

In particular \(M_d(k)\asymp_d(1+k)^{-h/4}\) for all \(k\ge0\).

\textbf{Proof.} The density formulas directly give \eqref{eq:s4-2}; polar
integration gives \eqref{eq:s4-3}. Positivity and continuity follow by Gaussian
domination on compact \(k\) ranges. If \(k_1\le k_2\), the Gaussian
channel \(X\mapsto\sqrt{k_1/k_2}X+\sqrt{1-k_1/k_2}Z\) maps both members
of the pair at \(k_2\) to the corresponding pair at \(k_1\).
Cauchy--Schwarz applied to the channel kernel proves that affinity
cannot decrease under this channel, giving monotonicity.

The classical spherical Laplace expansion \cite[Eq.~10.40.1]{DLMF} is

\begin{equation}
Z_d(t)=c_de^tt^{-h/2}(1+O_d(t^{-1}))\quad(t\to\infty). \tag{S4.5}
\label{eq:s4-5}
\end{equation}

It follows either from the displayed Bessel representation or directly
by writing a hemisphere locally as \((\sqrt{1-|w|^2},w)\) and scaling
\(w=t^{-1/2}v\). For \(d=1\) it follows from
\(\cosh t=(e^t/2)(1+e^{-2t})\). The same hemisphere decomposition also
proves the global bounds

\begin{equation}
c(1+t)^{-h/2}e^t\le Z_d(t)\le C(1+t)^{-h/2}e^t,\qquad t\ge0. \tag{S4.6}
\label{eq:s4-6}
\end{equation}

Substitute \eqref{eq:s4-5} twice into \eqref{eq:s4-3}. Around its unique radial saddle
\(q=\sqrt{k}\), the integrand becomes

\[
c_{\chi,d}c_d^{3/2}2^{h/2}k^{-3h/8}q^{h/4}e^{-(q-\sqrt{k})^2/2}(1+o(1)),
\]

where \(c_{\chi,d}=2^{1-d/2}/\Gamma(d/2)\) and
\(f_{\chi_d}(q)=c_{\chi,d}q^he^{-q^2/2}\). The identity
\(c_{\chi,d}c_d\sqrt{2\pi}=1\) yields \eqref{eq:s4-4}. To justify the saddle
step, first restrict \(|q-\sqrt{k}|\le k^{1/8}\), where the displayed
expansion is uniform; \eqref{eq:s4-6} bounds the complementary integral by a
polynomial in \(k,q\) times \(e^{-(q-\sqrt{k})^2/2}\). The part
\(q\le\sqrt{k}/2\) has an exponentially small bound and the remaining
Gaussian tails are negligible relative to \(k^{-h/4}\). Compact-range
continuity completes the global comparison.

For \(d=1\), the limit in \eqref{eq:s4-4} is \(1/\sqrt2\); this is a discrete
two-direction nuisance and does not create an unbounded calibration
factor.

\subsection{Uniform sharp calibration theorem}
\label{app:s4-3}

\setcounter{theorem}{2}
\begin{theorem}
For fixed \(d\ge1\), as \(R\to\infty\),

\begin{equation}
e_d(R,k)=\{1+o(1)\}K_dM_d(k)R^{(d-3)/2}e^{-R^2/8}
\left(1+\frac{2k}{R^2}\right)^{(d-1)/4},
\tag{S4.7}
\label{eq:s4-7}
\end{equation}

where the relative \(o(1)\) is uniform over \textbf{all} \(k\ge0\). The
likelihood-ratio threshold satisfies \(\tau_{R,k}\to1\) uniformly. Thus
the equal-prior Haar-mixture likelihood-ratio test is asymptotically
minimax with relative error tending to one uniformly in the reference
allocation.

Equivalently, in
the joint regime \(R\to\infty\), \(k\to\infty\), with no constraint on
their relative growth,

\begin{equation}
e_d(R,k)\sim\Phi(-R/2)
\left(1+\frac{R^2}{2k}\right)^{(d-1)/4}.
\tag{S4.8}
\label{eq:s4-8}
\end{equation}

At bounded \(k\), \eqref{eq:s4-7} retains the exact affinity multiplier
\(M_d(k)\); replacing it by a power of \(1+k\) would lose the exact
constant.
\end{theorem}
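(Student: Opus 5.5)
By the invariant reduction in the preceding theorem, $e_d(R,k)$ equals the common error of the equalized likelihood-ratio test between the two Haar mixtures $p_0,p_1$ with ratio \eqref{eq:s4-1}. I would first analyze the equal-prior Bayes error
\[
\tfrac12\int\min(p_0,p_1)=\tfrac12\int\sqrt{p_0p_1}\,\min\{L^{1/2},L^{-1/2}\}.
\]
I would then show that the two one-sided errors of the test $\{L>1\}$ are asymptotically equal, uniformly in $k$. That gives $\tau_{R,k}\to1$ and identifies the minimax error with this Bayes error up to a factor $1+o(1)$. The equalizing threshold only moves $\log\tau$ by $o(1)$, which costs a relative $o(1)$ by the Mills-type tail comparison already used in \ref{app:s1}.

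The central computation conditions on the calibration vector $A=a$. Given $a$, the null law of $B$ is $N_d(0,I)$ and the alternative is a von Mises--Fisher mixture of $N_d(Ru,I)$, concentrated around $a/|a|$ with concentration $\sqrt k|a|$. Using the expansion \eqref{eq:s4-5} with the global bounds \eqref{eq:s4-6}, I would write
\[
\log L=-\tfrac{R^2}{2}+|\sqrt k a+Rb|-\sqrt k|a|-\tfrac h2\log\frac{|\sqrt k a+Rb|}{\sqrt k|a|}+O(t^{-1}).
\]
With $b=(b_1,b_\perp)$ in coordinates aligned with $a$, the expansion
\[
|\sqrt k a+Rb|\approx \sqrt k|a|+Rb_1+\frac{R^2|b_\perp|^2}{2(\sqrt k|a|+Rb_1)}
\]
shows that the decision boundary is a curved hypersurface through $b_1\approx R/2$. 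The integrand $\sqrt{p_0p_1}\min\{L^{\pm1/2}\}$ is Laplace-concentrated at the point $b\approx (R/2)\,a/|a|$. In the normal direction the integral of $\min\{L^{1/2},L^{-1/2}\}$ against the Gaussian factor produces the familiar $1/R$ Mills scale. The transverse Gaussian integral over $b_\perp$ has its quadratic form changed by the curvature, giving a factor
\[
\Bigl(1-\frac{R^2}{2(2\sqrt k|a|+R^2)}\Bigr)^{-h/2}.
\]
Integrating the remaining $a$-dependence against $\sqrt{p_0p_1}$ in the $a$ variable should reproduce the affinity $M_d(k)$ via \eqref{eq:s4-2}. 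Evaluated at the dominant scale $|a|\approx\sqrt k$, the curvature factor should combine with the spherical prefactors $c_d$, $R^{-h/2}$ from $Z_d$ and $e^{-R^2/8}$ into $K_dR^{(d-3)/2}e^{-R^2/8}(1+2k/R^2)^{(d-1)/4}$. The constant $K_d$ would be checked against the known-calibration-free case $k=0$, where $M_d(0)=1$ and the expansion of \ref{app:s1-4} must be recovered.

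The equivalent form \eqref{eq:s4-8} then follows from \eqref{eq:s4-4} and the Mills ratio $\Phi(-R/2)\sim 2R^{-1}(2\pi)^{-1/2}e^{-R^2/8}$. This is an algebraic check of the constants, using $c_{\chi,d}c_d\sqrt{2\pi}=1$.

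I expect the main obstacle to be uniformity of the relative $o(1)$ over all $k\ge0$. The Laplace expansion behaves differently in three regimes: bounded $k$, where the concentration $\sqrt k|a|$ is not large and \eqref{eq:s4-5} is unavailable in the $a$ variable; $k\asymp R^2$; and $k\gg R^2$, where the problem approaches known direction. I would first try a subsequence and compactness argument in the parameter $\kappa=k/R^2\in[0,\infty]$, together with the separate bounded-$k$ range. On bounded $k$ I would keep the exact $a$-integral, which is why $M_d(k)$ rather than a power of $1+k$ appears, and expand only in $b$, where $t=|\sqrt k a+Rb|\gtrsim R$ is large. On $k\to\infty$ I would expand in both variables. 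In each case I would prove domination of the non-saddle regions by the polynomial-times-Gaussian envelopes from \eqref{eq:s4-6}, with constants uniform in $k$. The delicate part is the overlap region where $\sqrt k|a|$ is moderate, and there I would use the uniform two-sided bound \eqref{eq:s4-6} rather than the sharp expansion.
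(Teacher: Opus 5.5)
Your outer framework is sound. The sandwich $\tfrac12\{\alpha(1)+\beta(1)\}\le e_d(R,k)\le\max\{\alpha(1),\beta(1)\}$ does turn asymptotic equality of the two one-sided errors of $\{L>1\}$ into \eqref{eq:s4-7}. Conditioning on $A$ and expanding $Z_d$ near $|B|\approx R/2$ is also the paper's strategy. The gap is in the central computation.

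Write $t=\sqrt k|a|$. On the boundary $L=\tau$, up to the logarithmic shifts, one has $b_1\approx R/2-R|b_\perp|^2/(2t+R^2)$. The transverse exponent of $p_0$ there is therefore $-\tfrac12|b_\perp|^2\cdot 2t/(2t+R^2)$. The curvature factor is $\{(2t+R^2)/(2t)\}^{h/2}$, not your $\{1-R^2/(2(2t+R^2))\}^{-h/2}$. Only the correct factor, multiplied by the $(s/t)^{-h/4}$ coming from the $-\tfrac h2\log(s/t)$ shift of the boundary, gives the orientation penalty $(1+R^2/(2k))^{h/4}=(w_0/k)^{h/4}$. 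In the paper this is the term $D\theta^2$, with $D=kR^2/(4w_0)$, in regime II. Your factor is bounded by $2^{h/2}$. For $d\ge2$ and $1\ll k\ll R^2$ it would therefore put the error below the known-direction bound $\Phi(-R/2)$. So the combination into $K_dR^{(d-3)/2}e^{-R^2/8}(1+2k/R^2)^{(d-1)/4}$ that you assert does not go through as written.

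There is a more serious problem. The corrected transverse width $R/\sqrt{2t}$ is comparable to the sphere radius $R/2$ once $t=O(1)$. So for bounded $k$, and on the $A$-tail where $\sqrt k|A|$ is small, the integrand is not concentrated near $b\approx(R/2)a/|a|$; the whole shell $|b|\approx R/2$ contributes. A point-Laplace in $b$ followed by an $a$-integral cannot produce the exact $M_d(k)$; it only yields its large-$k$ asymptotic \eqref{eq:s4-4}.

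The missing step is the paper's regime I:
\begin{itemize}
\item Write $B=rv$ and use $|\sqrt kA+Rrv|=Rr+\sqrt kA\cdot v+O(k|A|^2/R^2)$.
\item Perform only the radial Mills integral, at $r_\tau=R/2+\{h\log R+\log\tau-\log C_k(A,v)\}/R$. This yields $\tau^{\mp1/2}\sqrt{C_k(A,v)}$, where $C_k=c_d2^{h/2}e^{\sqrt kA\cdot v}/Z_d(\sqrt k|A|)$.
\item Integrate $v$ over the entire sphere and $A\sim\overline P_k$. Then \eqref{eq:s4-3} gives $E_0\sqrt{C_k}=\sqrt{c_d}\,2^{h/4}M_d(k)$.
\end{itemize}
This expansion is valid for all $k\le R^{1/4}$, and the point-Laplace is needed only for $k\ge R^{1/4}$. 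Hence there is no intermediate region where you must fall back on \eqref{eq:s4-6}, which would lose the constant.

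Finally, equality of $\alpha(1)$ and $\beta(1)$ does not locate the equalizer. To get $\tau_{R,k}\to1$ you need $\alpha(\tau)\sim\tau^{-1/2}X$ and $\beta(\tau)\sim\tau^{1/2}X$ uniformly on a compact neighbourhood of $\tau=1$, as in \eqref{eq:s4-15} and \eqref{eq:s4-19}. You obtain this only by carrying $\log\tau$ through the boundary shift.
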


\subsubsection*{Proof preparation: uniform tail bounds}
\addcontentsline{toc}{subsubsection}{Proof preparation: uniform tail bounds}

For a threshold \(\tau\) in a fixed compact subset of \((0,\infty)\),
write \(\alpha_{R,k}(\tau)=P_0(L>\tau)\) and
\(\beta_{R,k}(\tau)=P_1(L\le\tau)\). Both probabilities here are under
the Haar mixtures.

The elementary inequality \((\log Z_d)'\le1\) and the triangle
inequality give

\begin{equation}
L(a,b')\le\exp\{R|b'|-R^2/2\}. \tag{S4.9}
\label{eq:s4-9}
\end{equation}

Under \(P_0\), \(B\) is independent of \(A\) and has standard Gaussian
law.
Consequently,
uniformly in the realized \(A=a\) and in \(k\),

\begin{equation}
P_0(L>\tau\mid A=a)\le C R^{d-2}e^{-R^2/8}, \tag{S4.10}
\label{eq:s4-10}
\end{equation}

for all sufficiently large \(R\). This follows because \eqref{eq:s4-9} requires
\(|B|\ge R/2+(\log\tau)/R\).

The corresponding conditional type-II contribution is

\begin{equation}
E_0[L1\{L\le\tau\}\mid A=a]\le C R^{d-2}e^{-R^2/8}. \tag{S4.11}
\label{eq:s4-11}
\end{equation}

For \(r=|B|\le R/2\), bound the integrand by \(e^{Rr-R^2/2}\); its
radial integral is bounded by \(C R^{d-2}e^{-R^2/8}\) using the endpoint
\(r=R/2\) of \(r^he^{-(R-r)^2/2}\). For \(r>R/2\), bound it by \(\tau\)
and use the central-chi tail. Thus \eqref{eq:s4-11} does not assume that type-II
error follows from a radial event under the alternative.

The \(A\)-radius has law \(\chi_d(\sqrt{k})\) under both hypotheses.
Since it can be represented as \(|\sqrt{k}e_1+Z|\), the event

\[
\big||A|-\sqrt{k}\big|>T
\]

has probability at most \(P(|Z|>T)\le C(1+T)^de^{-T^2/2}\). Choose
\(T=\sqrt{C_0\log R}\), with \(C_0\) as large as needed. Equations
\eqref{eq:s4-10}--\eqref{eq:s4-11}
then let us discard this event at relative cost \(o(1)\) compared even
with the known-direction lower scale \(R^{-1}e^{-R^2/8}\). In the
small-\(k\) regime it suffices to impose only \(|A|\le\sqrt{k}+T\).

\subsubsection*{\texorpdfstring{Proof, regime I: \(0\le k\le R^{1/4}\)}{Proof, regime I: 0 <= k <= R to the power 1/4}}
\addcontentsline{toc}{subsubsection}{\texorpdfstring{Proof, regime I: \(0\le k\le R^{1/4}\)}{Proof, regime I: 0 <= k <= R to the power 1/4}}

Put \(B=rv\), where \(v\) is uniform on the sphere independently of
\(A\) under \(P_0\), and write \(t=\sqrt{k}|A|\). On
\(|A|\le\sqrt{k}+T\), uniformly for \(R/4\le r\le3R/4\),

\[
|\sqrt{k}A+Rrv|=Rr+\sqrt{k}A\cdot v+O\left(\frac{k|A|^2}{R^2}\right).
\]

The remainder is \(o(1)\) uniformly in this regime. From \eqref{eq:s4-5},

\begin{equation}
\log L=R(r-R/2)-h\log R+\log C_k(A,v)+o(1), \tag{S4.12}
\end{equation}

at every threshold neighborhood used below, where

\[
C_k(A,v)=c_d2^{h/2}\frac{e^{\sqrt{k}A\cdot v}}{Z_d(t)}.
\]

The relevant radial threshold is

\begin{equation}
r_\tau=\frac R2+\frac{h\log R+\log\tau-\log C_k(A,v)+o(1)}R. \tag{S4.13}
\label{eq:s4-13}
\end{equation}

Indeed \(|\log C_k|\le C+2t+C\log(1+t)\) and hence the numerator is
\(O(R^{1/4}+\sqrt{\log R}\,R^{1/8}+\log R)\). Its square divided by
\(R^2\) tends uniformly to zero. The ratio is increasing in \(r\ge R/4\)
because \(Rr>\sqrt{k}|A|\) and \(Z_d\) is increasing; below \(R/4\),
\eqref{eq:s4-9} excludes a threshold crossing for sufficiently large \(R\).

The central-chi tail at \eqref{eq:s4-13} gives, uniformly on this set,

\begin{equation}
P_0(L>\tau\mid A,v)=\{1+o(1)\}
2c_{\chi,d}2^{-h}R^{h/2-1}e^{-R^2/8}\tau^{-1/2}\sqrt{C_k(A,v)}. \tag{S4.14}
\label{eq:s4-14}
\end{equation}

For the conditional type-II integral, put \(r=r_\tau-s/R\) below the
threshold. The logarithm of its integrand relative to its endpoint tends
to \(-s/2\) uniformly on bounded \(s\). It is bounded by \(Ce^{-s/4}\)
until \(r=R/4\): the derivative of \(\log L+\log f_{\chi_d}\) is at
least \(R/4\) there for sufficiently large \(R\). The remaining interval
\(r<R/4\) has the exponentially smaller bound from \eqref{eq:s4-9}. Endpoint
integration therefore gives the same right side of \eqref{eq:s4-14}, with
\(\tau^{-1/2}\) replaced by \(\tau^{1/2}\).

The discarded \(A\)-tail is harmless also in the affinity integral: by
\eqref{eq:s4-6}, \(\sqrt{C_k(A,v)}\le C(1+\sqrt{k}|A|)^{h/4}\), and its tail
expectation is bounded by a polynomial in \(k,T\) times \(e^{-T^2/4}\).
Equation \eqref{eq:s4-4}, continuity, and a sufficiently large \(C_0\) make this
\(o(M_d(k))\) uniformly for \(k\le R^{1/4}\).

Finally, integrating over \(A,v\) gives

\[
E_0\sqrt{C_k(A,v)}=\sqrt{c_d}\,2^{h/4}M_d(k),
\]

and \(2c_{\chi,d}2^{-h}\sqrt{c_d}2^{h/4}=K_d\). We obtain, uniformly for
\(0\le k\le R^{1/4}\) and compact positive \(\tau\),

\[
\alpha_{R,k}(\tau)\sim\tau^{-1/2}K_dM_d(k)R^{h/2-1}e^{-R^2/8},
\]

\begin{equation}
\beta_{R,k}(\tau)\sim\tau^{1/2}K_dM_d(k)R^{h/2-1}e^{-R^2/8}. \tag{S4.15}
\label{eq:s4-15}
\end{equation}

\subsubsection*{\texorpdfstring{Proof, regime II: \(k\ge R^{1/4}\)}{Proof, regime II: k >= R to the power 1/4}}
\addcontentsline{toc}{subsubsection}{\texorpdfstring{Proof, regime II: \(k\ge R^{1/4}\)}{Proof, regime II: k >= R to the power 1/4}}

Use polar coordinates for both vectors. Let \(q=|A|\), \(r=|B|\), and
let \(\theta\in[0,\pi]\) be their angle. Under \(P_0\), these are
independent with radial laws \(\chi_d(\sqrt{k})\), \(\chi_d\), and
uniform spherical angular law. Write

\[
w_0=k+R^2/2,\qquad D=\frac{kR^2}{4w_0},\qquad a_0=\frac h2\log(w_0/k).
\]

Here \(D\asymp\min(k,R^2)\ge cR^{1/4}\). With \(q=\sqrt{k}+z\),
\(r=R/2+\delta/R\), the main region will satisfy

\begin{equation}
|z|\le T,\qquad \sqrt D\,\theta\le T,\qquad |\delta|\le C_1\log R. \tag{S4.16}
\label{eq:s4-16}
\end{equation}

On this region, Taylor's formula for the exact norm

\[
s=\sqrt{t^2+R^2r^2+2tRr\cos\theta},\qquad t=\sqrt{k}q,
\]

and \eqref{eq:s4-5} give uniformly

\begin{equation}
\log L=\delta-a_0-D\theta^2+o(1). \tag{S4.17}
\label{eq:s4-17}
\end{equation}

For clarity, the errors are bounded by constants times

\[
\frac{(\log R)^{3/2}}{\sqrt{k}}+
\frac{(\log R)^2}{\min(k,R^2)}+
\frac{(\log R)^2}{R^2}+\frac1k,
\]

which tend uniformly to zero. In particular this is a rare-event
expansion, not an ordinary central-limit approximation. The angle
remainder is \(O(D\theta^4)\), not \(O(R^2\theta^4)\); using the latter
would incorrectly exclude slowly increasing \(k\).

Here are the localization details needed to justify \eqref{eq:s4-16}. The
discarded radial \(q\) tails were handled by
\eqref{eq:s4-10}--\eqref{eq:s4-11}.
On the remaining \(q\) range and \(r\in[R/4,3R/4]\), the exact identity

\[
t+Rr-s=\frac{2tRr(1-\cos\theta)}{t+Rr+s}
\]

and \eqref{eq:s4-6} imply

\begin{equation}
L\le C R^h\exp\{R(r-R/2)-cD\theta^2\}. \tag{S4.18}
\label{eq:s4-18}
\end{equation}

The polynomial factor is uniform in arbitrarily large \(k\): if
\(t>2Rr\), then \((1+t)/(1+s)\) is bounded; if \(t\le2Rr\), then
\(1+t\le CR^2\).

For fixed \(q,\theta\), use \(\delta=R(r-R/2)\) and

\[
f_{\chi_d}(r)dr\le C R^{h-1}e^{-R^2/8}e^{-\delta/2}d\delta.
\]

Integrating either a threshold indicator or \(\min\{\tau,L\}\) using
\eqref{eq:s4-18} bounds their radial contributions by

\[
C R^{3h/2-1}e^{-R^2/8}e^{-c'D\theta^2}.
\]

The excluded intervals \(r<R/4\) and \(r>3R/4\) contribute at most a
polynomial in \(R\) times \(e^{-9R^2/32}\), by \eqref{eq:s4-9} and the Gaussian
radial tail. Taking \(T^2=C_0\log R\) with sufficiently large \(C_0\)
therefore discards \(\sqrt D\theta>T\) at relative cost \(o(1)\). On the
resulting angular range, \eqref{eq:s4-18} bounds the negative-\(\delta\) error
integral by

\[
C R^{2h-1}e^{-R^2/8}\int_{-\infty}^{-C_1\log R}e^{\delta/2}d\delta
\le C R^{2h-1-C_1/2}e^{-R^2/8}.
\]

The positive-\(\delta\) error integral is bounded by
\(C R^{h-1-C_1/2}e^{-R^2/8}\) using the bounded threshold and the radial
density. The type-I indicator vanishes on the sufficiently negative part
by \eqref{eq:s4-18}. Choose \(C_1>4h+4\) and also large enough to contain
\(a_0+D\theta^2+\log\tau\), whose absolute value is at most
\((h+C_0)\log R+O(1)\). This explicitly localizes \(\delta\) to
\(|\delta|\le C_1\log R\) at relative cost \(o(1)\) even against the
known-direction scale. These estimates bound both errors separately,
because the type-II integrand is \(L1\{L\le\tau\}\le\min\{\tau,L\}\).

On \eqref{eq:s4-16},

\[
f_{\chi_d}(r)dr=\{1+o(1)\}c_{\chi,d}2^{-h}R^{h-1}e^{-R^2/8}e^{-\delta/2}d\delta.
\]

For \(d\ge2\), integrating the spherical angular measure near zero gives

\[
\int e^{-D\theta^2/2}d\mu_d(\theta)\sim c_dD^{-h/2}.
\]

This follows by the ordinary Gaussian integral in the \(h\) tangent
coordinates. For \(d=1\), the aligned direction has mass \(1/2=c_1\),
the opposite direction is discarded by \eqref{eq:s4-18}, and the same formula
with \(h=0\) holds.

Using \eqref{eq:s4-17}, integration in \(\delta\) above or below
\(a_0+D\theta^2+\log\tau\) yields respectively
\(2\tau^{-1/2}e^{-a_0/2}e^{-D\theta^2/2}\) and
\(2\tau^{1/2}e^{-a_0/2}e^{-D\theta^2/2}\). The \(q\) integral tends to
one. The identity \(c_{\chi,d}c_d=(2\pi)^{-1/2}\) gives, uniformly for
\(k\ge R^{1/4}\),

\[
\alpha_{R,k}(\tau)\sim\tau^{-1/2}\frac2{\sqrt{2\pi}R}e^{-R^2/8}(w_0/k)^{h/4},
\]

\begin{equation}
\beta_{R,k}(\tau)\sim\tau^{1/2}\frac2{\sqrt{2\pi}R}e^{-R^2/8}(w_0/k)^{h/4}. \tag{S4.19}
\label{eq:s4-19}
\end{equation}

\subsubsection*{Proof completion}
\addcontentsline{toc}{subsubsection}{Proof completion}

The error equalizer lies in any prescribed compact interval around one,
for all sufficiently large \(R\), because \eqref{eq:s4-15} and \eqref{eq:s4-19} hold
uniformly at its two endpoints. They further force \(\tau_{R,k}\to1\)
uniformly. This proves the minimax asymptotic in each regime. In regime
I, \((1+2k/R^2)^{h/4}=1+o(1)\). In regime II, \eqref{eq:s4-4} is uniform for
\(k\ge R^{1/4}\) and converts \eqref{eq:s4-19} exactly into \eqref{eq:s4-7}. The
normal-tail asymptotic then gives \eqref{eq:s4-8}. This completes the proof over
every allocation.

\subsection{All-information comparison, budget laws, and calibrated interpretation}
\label{app:s4-4}

There are constants depending only on \(d\) such that for all
\(R,k\ge0\),

\begin{equation}
e_d(R,k)\asymp_d
\frac{e^{-R^2/8}}{1+R}
\left(1+\frac{R^2}{1+k}\right)^{(d-1)/4}.
\tag{S4.20}
\end{equation}

For large \(R\) this follows from \eqref{eq:s4-7} and
\(M_d(k)\asymp(1+k)^{-h/4}\). For bounded \(R\), the known-direction
two-point lower bound \(\Phi(-R/2)\) and the trivial upper bound \(1/2\)
give the comparison uniformly over \(k\).

Thus the displayed expression gives a bounded-factor comparison scale.
Its use of \(1+k\) appropriately regularizes bounded reference
information; it does not specify the exact constant supplied by \eqref{eq:s4-7}.

For every \(d\ge2\), as \(R\to\infty\):

\begin{itemize}
\setlength{\itemsep}{0pt}
\item
  Bounded \(k\) leaves the uncalibrated power \(R^{(d-3)/2}\), with
  exact multiplier \(M_d(k)\).
\item
  \(1\ll k\ll R^2\) reduces the error by an unbounded factor of order
  \(k^{(d-1)/4}\) but retains an unbounded penalty relative to a known
  direction.
\item
  \(k=\Omega(R^2)\) is necessary and sufficient for the error to be
  within a bounded factor of the known-direction error.
\item
  \(k/R^2\to\infty\) is necessary and sufficient for
  \(e_d(R,k)/\Phi(-R/2)\to1\).
\end{itemize}

The necessity in the last two statements follows directly from
\eqref{eq:s4-7}--\eqref{eq:s4-8},
including bounded \(k\). These statements concern information counts;
they do not equate a calibration observation to a scalar detector
observation unless the specified source normalizations make their
information equal.

If \(H=R^2\to\infty\) and a desired classification error is
\(O(n^{-1})\), write \(\ell=\log n\). The all-count comparison gives the
exact bounded-log criterion

\begin{equation}
\frac H8+\frac12\log(1+H)
-\frac{d-1}{4}\log\left(1+\frac H{1+k}\right)
\ge\ell-O(1).
\tag{S4.21}
\label{eq:s4-21}
\end{equation}

For \(k\asymp H^\rho\) with fixed \(0\le\rho\le1\), its critical
information window is

\begin{equation}
H=8\ell+\{2(d-1)(1-\rho)-4\}\log\ell+O(1), \tag{S4.22}
\end{equation}

provided the comparability constants implicit in \(k\asymp H^\rho\)
remain fixed. For \(k\gtrsim H\), the known-direction window
\(H=8\ell-4\log\ell+O(1)\) is recovered. Formula \eqref{eq:s4-21}, rather than a
power-law shorthand, covers irregular allocations and retains their
bounded-window effects.

For fixed dimension \(d=1\), the nuisance has only two possible
directions; calibration changes bounded constants throughout, and no
unbounded power penalty occurs. For growing \(d\), none of the
uniformity claims is asserted.

The full unknown-coordinate transfer, with its separate
nuisance-invariant lower bound, is proved in Appendix \ref{app:s5}. The
equalizer and uniform asymptotics in this section concern the
fixed-radius experiment alone.

\newpage

\section{Calibration through an unknown contact coordinate}
\label{app:s5}

\subsection{Experiment, target, and precision of the conclusion}
\label{app:s5-1}

Fix an integer dimension \(d\ge1\), a constant \(0<a<1/2\), and a fixed
function \(\psi\in C^2([0,a])\) with

\[
\psi(0)=\psi'(0)=0,\qquad \psi(x)>0\quad(0<x\le a).
\]

The parameter is \(\theta=(x,b,u)\in[0,a]\times\{0,1\}\times S^{d-1}\).
The same coordinate \(x\), binary label \(b\), and unit vector \(u\) are
shared by every observation. Independently conditional on this
parameter, observe

\begin{equation}
\begin{aligned}
(X_i,V_i)&\sim N_{d+1}\big((x,b\psi(x)u),I_{d+1}\big),&&1\le i\le n,\\
U_j&\sim N_d\big(b(1-x)u,I_d\big),&&1\le j\le m,\\
C_l&\sim N_d(u,I_d),&&1\le l\le k.
\end{aligned}\tag{S5.1}
\end{equation}

Here \(n,m,k\) are arbitrary nonnegative integers; absent sources
produce no observations. The number of scalar observations is

\[
(d+1)n+dm+dk.
\]

The target is \((x,b)\), with loss
\((\widehat x-x)^2+(\widehat b-b)^2\). The direction \(u\) is nuisance.
Let \(\mathcal R_d(n,m,k)\) denote minimax target risk. Let
\(\mathcal E_d(n,m,k)\) denote minimax binary classification error,
namely the infimum over tests of their maximum error over all \(x,b,u\).
Randomized tests may be admitted in this definition; the asymptotic
upper construction below is a deterministic function of already counted
observations.

Define the known design function

\begin{equation}
H=H_\psi(n,m)=\min_{x\in[0,a]}\{n\psi(x)^2+m(1-x)^2\}.\tag{S5.2}
\label{eq:s5-2}
\end{equation}

Its minimum exists by continuity and compactness. The calibration count
does not enter \(H\): calibration changes composite testing difficulty
while leaving the least cross-target Euclidean distance unchanged.

For known \(R,k\ge0\), write \(e_d(R,k)\) for the minimax error of

\[
A\sim N_d(\sqrt{k}u,I_d),\qquad B\sim N_d(bRu,I_d),
\]

with common \(u\in S^{d-1}\). The established \ref{app:s4-1} result supplies an
invariant likelihood-ratio equalizer \(\varphi_{R,k}\) whose two errors
equal \(e_d(R,k)\) for every direction. At \(k=0\), the test does not
require an observed calibration vector. At \(R=0\), the error is
\(1/2\).

\begin{theorem}[all-count target risk and sharp logarithmic transfer]
Constants \(0<c<C<\infty\), depending only on the fixed
model \((d,a,\psi)\), satisfy

\begin{equation}
c\left\{\frac1{1+n}+e_d(\sqrt{H_\psi(n,m)},k)\right\}
\le \mathcal R_d(n,m,k)
\le C\left\{\frac1{1+n}+e_d(\sqrt{H_\psi(n,m)},k)\right\}
\tag{S5.3}
\label{eq:s5-3}
\end{equation}

for every nonnegative integer triple \((n,m,k)\). Moreover, for each
fixed \(C_0<\infty\),

\begin{equation}
\sup_{\substack{0\le m\le C_0\log n\\k\ge0}}
\left|\frac{\mathcal E_d(n,m,k)}{e_d(\sqrt{H_\psi(n,m)},k)}-1\right|
\longrightarrow0\qquad(n\to\infty).
\tag{S5.4}
\label{eq:s5-4}
\end{equation}

The supremum in \eqref{eq:s5-4} is over integer counts. It includes zero,
bounded, slowly increasing,
proportional,
and arbitrarily excessive calibration. Constants concern a fixed
function \(\psi\); a common \(C^2\) norm alone is not asserted to make
them uniform over changing functions.

Equation \eqref{eq:s5-3} is a comparison up to fixed positive multiplicative
constants for squared target risk. Equation \eqref{eq:s5-4} is a relative-error
asymptotic for binary classification. It does not give an exact leading
constant for the two-coordinate squared minimax risk.
\end{theorem}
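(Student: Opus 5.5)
The proof splits into a lower bound, an all-count upper bound for \eqref{eq:s5-3}, and the sharp transfer \eqref{eq:s5-4}. Throughout, the fixed-radius results of Appendix~\ref{app:s4} are used as black boxes.

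\textbf{Lower bounds.} For the label term, fix a minimizer \(x_*\) of \eqref{eq:s5-2} and restrict to parameters \((x_*,b,u)\), keeping both labels and the whole sphere. With \(x_*\) known, the sufficient statistics are the scalar means \(\bar X\) (ancillary), the whitened detector pair \((\sqrt n\bar V,\sqrt m\bar U)\) with mean \(b\,v(x_*)\otimes u\), and \(\sqrt k\bar C\sim N_d(\sqrt k u,I_d)\). Rotate the detector pair onto the unit vector \(v(x_*)/\|v(x_*)\|\). This yields exactly \eqref{eq:33} with \(R=\sqrt{H}\), plus an independent pure-noise orthogonal component. Hence \(\mathcal E_d\ge e_d(\sqrt H,k)\). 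Any estimator \(\widehat b\) rounded to \(\{0,1\}\) is a test, and a wrong label costs loss at least \(1/4\), so \(\mathcal R_d\ge c\,e_d\). For the floor, take \(b=0\): every \(V,U,C\) mean is then independent of \(x\). Comparing \(x=\pm\) a small multiple of \((1+n)^{-1/2}\), near an interior point, bounds the KL divergence and gives \(c/(1+n)\).

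\textbf{Upper bound for \eqref{eq:s5-3}.} Bounded \(n\) is trivial, since the loss is bounded. Estimate \(\hat x=\operatorname{clip}_{[0,a]}\bar X\), which contributes \(O(1/n)\) to the risk. It remains to bound the label error by \(C e_d(R_0,k)+C/n\).

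I would split by resolver count. If \(m>K\log n\), use the resolver-only norm test \(\|\sqrt m\bar U\|^2\) against a threshold of order \(m(1-a)^2/2\). Its error is \(e^{-cm}=O(n^{-1})\) by \(\chi^2\) tails. If \(m=0\) or \(H\) is bounded, \(e_d\) is bounded below and there is nothing to prove.

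Otherwise \(m\le K\log n\), and I use the attenuation construction \eqref{eq:42}. Form \(\hat B\) by projecting the detector pair on \(\hat v=v(\hat x)\), and use the Helmert contrasts \(W\), which are available since \(n\ge d+1\). Set \(\widetilde B=s\hat B+\sqrt{1-s^2}W\) and apply \(\varphi_{R_0,k}\) to \((\sqrt k\bar C,\widetilde B)\). Conditionally on \(\hat x\), the null law is exactly the nominal one. Under the alternative, the mean is \(R_0\lambda u\) with \(\lambda=\langle\hat v,v(x)\rangle/\|\hat v\|^2\). Since \(\|\hat v\|\ge R_0\) always, the relative displacement from the nominal radius is at most \(|1-\lambda|\), giving a shift of size \(R_0|1-\lambda|\).

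\textbf{Main obstacle.} The main obstacle is showing \(|1-\lambda|=O(\eta_n)\) uniformly in \(x\) and in \(m\ge1\), on a localization event whose complement is negligible even relative to \(e_d\). I would argue as follows.

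1. Use the Glaeser bound \(|\psi(\hat x)-\psi(x)|\le C\sqrt{\psi(x)}|\hat x-x|+C|\hat x-x|^2\), as in \eqref{eq:s3-12}.
2. Work on the event \(|\hat x-x|\le A\sqrt{\log n/n}\). Its complement has probability at most \(n^{-A^2/2}\), and \(e_d(R_0,k)\ge n^{-C}\) because \(R_0^2\le m\le K\log n\).
3. These give \(\|\hat v-v\|^2/\|v\|^2\lesssim\eta_n\), after normalizing by \(\|v\|^2\ge m(1-a)^2\) and using \(n\psi^2\le\|v\|^2\).
4. Apply the extremal inequality \eqref{eq:43} to the rejection region of the equalizer. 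Together with the Mills-ratio behaviour of the nominal error, this bounds the type-II inflation by \(\exp\{C\eta_nR_0(1+R_0)\}\), uniformly in \(k\) because the calibration coordinates are untouched.
5. For \eqref{eq:s5-3} the factor only needs to be bounded. Here I would use monotonicity in the shift, or truncate at \(R_0^2\le K\log n\), so that \(\eta_n\log n\to0\).
6. For \eqref{eq:s5-4}, in the range \(m\le C_0\log n\), the factor tends to one and the discarded tails are \(o(e_d)\). This gives the ratio upper bound, and the lower bound already gives \(\ge1\).
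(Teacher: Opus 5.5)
Your proposal is correct and follows essentially the same route as the paper. The lower bound restricts to a minimizer $x_*$ and projects, and the floor comes from $b=0$. The upper bound uses clipping, projection on $\hat v$, Helmert-contrast attenuation to the fixed radius $R_0$, the nominal equalizer with the Gaussian shift and hazard bounds (S5.8)--(S5.11), and a resolver-only norm test once $m$ exceeds a large multiple of $\log n$.

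There are two small slips. First, step 3 should bound the unsquared ratio $\|\hat v-v\|/\|v\|\lesssim\eta_n$, since that is what step 4 needs and the Glaeser estimate gives it; your weaker squared version would still suffice because $\sqrt{\eta_n}\log n\to0$. Second, the case $m=0$ of (S5.4), where $H=0$ and $e_d=1/2$, needs the trivial fair decision (for example the sign of $W_1$), which you did not state.
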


\subsection{Lower bounds and the surviving nuisance-invariant floor}
\label{app:s5-2}

Choose a minimizer \(x_*\) of \eqref{eq:s5-2}. Restrict the parameter to this
fixed coordinate while retaining both binary labels and every direction.
Set

\[
v_*=(\sqrt n\psi(x_*),\sqrt m(1-x_*)).
\]

The vector sample means, after multiplying by count square roots, have
mean \(b(v_{*,1}u,v_{*,2}u)\) and identity covariance on their present
source coordinates. When \(H>0\), orthogonal projection onto
\(v_*/\|v_*\|\) gives a \(d\)-vector with law \(N_d(b\sqrt H u,I_d)\).
Every orthogonal component is independent standard Gaussian and
ancillary. The scalar core sample mean is also ancillary in this
restricted submodel. The calibration mean gives
\(A=\sqrt k\bar C\sim N_d(\sqrt k u,I_d)\) when \(k>0\). Thus the
restricted experiment is exactly the \ref{app:s4-1} experiment plus independent
ancillary observations. At \(H=0\), its two label classes have identical
laws.
Consequently,

\begin{equation}
\mathcal E_d(n,m,k)\ge e_d(\sqrt H,k).\tag{S5.5}
\label{eq:s5-5}
\end{equation}

This restriction and projection are used to prove a lower bound, where
\(x_*\) is fixed and known. They are not proposed as a statistic in the
original unknown-coordinate experiment.

For any real estimator \(\widehat b\), thresholding at \(1/2\) makes an
error only when \((\widehat b-b)^2\ge1/4\). Hence

\begin{equation}
\mathcal R_d(n,m,k)\ge \tfrac14e_d(\sqrt H,k).\tag{S5.6}
\label{eq:s5-6}
\end{equation}

For the other lower bound, fix \(b=0\) and fix one direction \(u_0\).
Vary only \(x\) in a closed interval contained in \((0,a)\). Both vector
detector means vanish, while the calibration mean remains \(u_0\).
Therefore every resolver and calibration observation is exactly
invariant along this curve. Only the \(n\) scalar core observations
depend on \(x\).

More explicitly, choose \(x_\pm=a/2\pm c_0/\sqrt{1+n}\) with fixed
\(0<c_0<a/4\). Their real-Gaussian Kullback--Leibler divergence is

\[
\operatorname{KL}(P_+\|P_-)=\frac n2(x_+-x_-)^2=\frac{2nc_0^2}{1+n}\le2c_0^2.
\]

Their squared target distance is \(4c_0^2/(1+n)\). The elementary
two-point testing lower bound yields

\begin{equation}
\mathcal R_d(n,m,k)\ge \frac{c_1}{1+n},\tag{S5.7}
\label{eq:s5-7}
\end{equation}

uniformly in both other counts. Taking the maximum of \eqref{eq:s5-6} and \eqref{eq:s5-7}
bounds their sum within a factor two. Calibration therefore \textbf{does
not remove the core-budget floor in the full model}. The relevant
invariant curve fixes \(b=0\) and \(u\); no recentering involving an
unknown direction is used.

\subsection{A Gaussian perturbation inequality with relative error control}
\label{app:s5-3}

The following elementary lemma is the statistical stability input. If
\(P=N_q(\mu,I_q)\) and \(Q=N_q(\mu+h,I_q)\), then every measurable event
\(E\) satisfies

\begin{equation}
Q(E)\le\Phi\big(\Phi^{-1}(P(E))+\|h\|\big).\tag{S5.8}
\label{eq:s5-8}
\end{equation}

For \(h\ne0\), the likelihood ratio \(dQ/dP\) is increasing in the
scalar \(h^T(Z-\mu)/\|h\|\). Among all events of a given \(P\)
probability, the upper half-space in that scalar coordinate maximizes
the \(Q\) probability, by the Neyman--Pearson rearrangement argument \cite{NeymanPearson1933}.
Evaluating this half-space gives \eqref{eq:s5-8}. The cases \(h=0\) and
probabilities zero or one follow directly or by limits. This proof
applies to the joint
calibration/detector
Gaussian vector; a perturbation of only the detector mean has the same
norm even when calibration information is arbitrarily large.

The known-direction submodel gives

\begin{equation}
\Phi(-R/2)\le e_d(R,k)\le\tfrac12\quad\text{for every }R,k\ge0.\tag{S5.9}
\label{eq:s5-9}
\end{equation}

Let \(p=e_d(R,k)\) and \(q=-\Phi^{-1}(p)\in[0,R/2]\). The normal hazard
bound

\[
\frac{\phi(t)}{\Phi(-t)}\le C(1+t_+),\qquad t\in\mathbb R,
\]

follows for \(t\ge0\) by integrating the normal density over
\([t,t+(1+t)^{-1}]\), and for \(t<0\) by \(\Phi(-t)\ge1/2\). Integrating
the logarithmic derivative along \(\delta\ge0\) gives

\begin{equation}
\frac{\Phi(-q+\delta)}{\Phi(-q)}\le\exp\{C\delta(1+R)\}.\tag{S5.10}
\end{equation}

Thus a detector-mean perturbation at most \(\varepsilon R\) increases
the error of the nominal equalizer by at most the factor

\begin{equation}
\exp\{C\varepsilon R(1+R)\},\tag{S5.11}
\label{eq:s5-11}
\end{equation}

uniformly in \(k\). This controls both bounded and diverging radii. It
does not invoke monotonicity of the power of a likelihood-ratio test as
its unspecified signal amplitude changes.

\subsection{Localization and the counted Gaussian attenuation construction}
\label{app:s5-4}

Assume temporarily that \(n\ge d+1\) and \(m\ge1\). Let \(\ell=\log n\)
and use the scalar core coordinate to define

\[
\widehat x=\operatorname{clip}(\bar X,[0,a]),\qquad
v(x)=(\sqrt n\psi(x),\sqrt m(1-x)),\qquad
\widehat v=v(\widehat x),\qquad\widehat R=\|\widehat v\|.
\]

The sample mean \(\bar X\) is independent of the vector detector sample
means and of calibration. Define the conditionally linear projection

\begin{equation}
\widehat B=\frac{\widehat v_1\sqrt n\bar V+\widehat v_2\sqrt m\bar U}{\widehat R}.\tag{S5.12}
\label{eq:s5-12}
\end{equation}

Conditional on \(\widehat x\),

\begin{equation}
\widehat B\sim N_d(b\rho u,I_d),\qquad
\rho=\frac{\langle\widehat v,v(x)\rangle}{\widehat R}.\tag{S5.13}
\end{equation}

All denominators are positive because \(m\ge1\) and
\(1-\widehat x\ge1-a\).

We need an independent standard Gaussian vector for attenuation, but no
additional observations are required. From the already counted scalar
core observations take the \(d\) Helmert contrasts

\begin{equation}
W_r=\frac{\sum_{i=1}^rX_i-rX_{r+1}}{\sqrt{r(r+1)}},\qquad 1\le r\le d.\tag{S5.14}
\end{equation}

Each coefficient vector sums to zero and has squared norm one; distinct
vectors are orthogonal. Therefore \(W=(W_1,\ldots,W_d)\sim N_d(0,I_d)\)
and is independent of \(\bar X\), all vector sample means, and
calibration. These are exact facts for the stated Gaussian
observations,
including when \(n=d+1\).

Put \(R_0=\sqrt H\). By definition of the minimum, \(\widehat R\ge R_0\)
\textbf{for every realized} \(\widehat x\in[0,a]\). Consequently the
following operation is defined without a good-event restriction:

\begin{equation}
s=\frac{R_0}{\widehat R}\in[0,1],\qquad
\widetilde B=s\widehat B+\sqrt{1-s^2}\,W.\tag{S5.15}
\end{equation}

Conditional on \(\widehat x\), this is a unit-covariance Gaussian
detector with mean

\begin{equation}
b\widetilde R u,\qquad
\widetilde R=R_0\frac{\langle\widehat v,v(x)\rangle}{\|\widehat v\|^2}.\tag{S5.16}
\label{eq:s5-16}
\end{equation}

It is independent of the calibration sample mean under that
conditioning. The nominal radius is now the fixed least-favourable
radius \(R_0\), rather than the much larger and unknown true radius.
This exact attenuation is the step that makes a uniform transfer
possible even at parameter points far from the contact set.

Every operation in
\eqref{eq:s5-12}--\eqref{eq:s5-16}
is a Borel function of counted observations and known model functions.
In particular, the construction neither assumes an independent nuisance
copy nor creates an uncounted independent observation.

\subsection{Uniform relative-radius accuracy, including flat contact}
\label{app:s5-5}

The fixed \(\psi\) has a nonnegative \(C^{1,1}\) extension to the real
line. To the left, extend by zero, using \(\psi(0)=\psi'(0)=0\); near
the positive endpoint use positivity and a smooth
continuation,
then turn to a positive constant. The extension has Lipschitz derivative
with some finite constant \(K\). As in Glaeser's inequality \cite{Glaeser1963}, nonnegativity and the upper Taylor bound at \(x-\psi'(x)/K\) imply

\[
|\psi'(x)|^2\le2K\psi(x).
\]

Hence \(\sqrt\psi\) is Lipschitz on the interval, and there is a
model-dependent constant such that

\begin{equation}
|\psi(y)-\psi(x)|\le C|y-x|\sqrt{\psi(x)}+C|y-x|^2.\tag{S5.17}
\label{eq:s5-17}
\end{equation}

This estimate remains valid at the zero endpoint and for flat functions;
no positive lower derivative, finite-order contact assumption, or
inverse-function estimate for \(\psi\) is used.

For any fixed \(A>0\), let

\[
\mathcal A_n=\{|\widehat x-x|\le A\sqrt{\ell/n}\}.
\]

Clipping does not enlarge the estimation error, so

\begin{equation}
P_\theta(\mathcal A_n^c)\le2n^{-A^2/2}.\tag{S5.18}
\end{equation}

Writing \(t=\sqrt n\psi(x)\), \eqref{eq:s5-17} gives on this event

\[
|\widehat v_1-v_1|\le Cn^{-1/4}\sqrt\ell\sqrt t+Cn^{-1/2}\ell.
\]

Since \(\|v\|\ge\sqrt{t^2+(1-a)^2}\) for \(m\ge1\), and since

\[
\frac{|\widehat v_2-v_2|}{\|v\|}\le\frac{|\widehat x-x|}{1-a},
\]

we obtain uniformly in \(x\) and every \(m\ge1\)

\begin{equation}
\frac{\|\widehat v-v\|}{\|v\|}\le C\eta_n,\qquad
\eta_n=n^{-1/4}\sqrt\ell+n^{-1/2}\ell,\qquad \eta_n\ell\longrightarrow0.\tag{S5.19}
\end{equation}

For sufficiently large \(n\), \(\|\widehat v\|\ge\|v\|/2\). Equation
\eqref{eq:s5-16} and Cauchy--Schwarz then yield

\begin{equation}
|\widetilde R-R_0|
=R_0\frac{|\langle\widehat v,v-\widehat v\rangle|}{\|\widehat v\|^2}
\le C\eta_nR_0.\tag{S5.20}
\label{eq:s5-20}
\end{equation}

This bound is independent of the true radius \(\|v(x)\|\) and
independent of \(k\). In particular, a true detector radius of order
\(\sqrt n\) causes no large-mean perturbation after attenuation.

\subsection{Proof of sharp composite classification transfer}
\label{app:s5-6}

For \(m\ge1\), apply the established equalizer \(\varphi_{R_0,k}\) to
\((\sqrt k\bar C,\widetilde B)\), omitting the calibration argument when
\(k=0\). Under \(b=0\), its conditional error is \textbf{exactly}
\(e_d(R_0,k)\) for every \(\widehat x\) and \(u\): the distribution of
\(\widetilde B\) is standard normal and the calibration law is
unchanged.

Under \(b=1\), on \(\mathcal A_n\), compare its conditional distribution
with the nominal Gaussian pair at radius \(R_0\). The full joint mean
perturbation has norm at most \(C\eta_nR_0\) by \eqref{eq:s5-20}; the calibration
block contributes zero to that perturbation. Applying
\eqref{eq:s5-8}--\eqref{eq:s5-11}
to the equalizer's type-II event gives

\begin{equation}
P_\theta(\widehat b\ne b)
\le e_d(R_0,k)\exp\{C\eta_nR_0(1+R_0)\}+2n^{-A^2/2}.\tag{S5.21}
\label{eq:s5-21}
\end{equation}

For \(m\le C_0\ell\), we have \(H\le m\le C_0\ell\) by evaluation at
\(x=0\). Therefore the exponential factor in \eqref{eq:s5-21} tends to one
uniformly in this range and in \(k\). Moreover, from \eqref{eq:s5-9} and a
normal-tail lower bound,

\begin{equation}
e_d(R_0,k)\ge\Phi(-\sqrt{C_0\ell}/2)
\ge\frac{c}{1+\sqrt\ell}\,n^{-C_0/8}.\tag{S5.22}
\label{eq:s5-22}
\end{equation}

Choose the fixed event constant \(A\) so that \(A^2/2>C_0/8+2\). The
last term in \eqref{eq:s5-21}, divided by the error in \eqref{eq:s5-22}, then tends
uniformly to zero. The exact lower bound \eqref{eq:s5-5} proves \eqref{eq:s5-4} for
\(m\ge1\).

For \(m=0\), \(H=0\) because \(\psi(0)=0\). At \(x=0\), both detector
sources reveal no label information and calibration depends only on
\(u\), so classification error is at least \(1/2\). The sign of \(W_1\)
implements a fair label decision with error \(1/2\) at every parameter.
Thus \eqref{eq:s5-4} includes \(m=0\) exactly.

The argument handles \(H\to0\), bounded \(H\), and \(H\to\infty\)
without a separate continuity assertion for the unbounded calibration
plane. The calibration allocation does not enter the relative-error
bound.

\subsection{Completion of the all-count target-risk proof}
\label{app:s5-7}

For \(n\) large and \(m\) in any fixed logarithmic range, retain
\(\widehat x\) and the hard decision just constructed. Because clipping
reduces squared error,

\[
E_\theta(\widehat x-x)^2\le n^{-1}.
\]

For a hard binary decision, squared label loss equals classification
error. Equation \eqref{eq:s5-21}, with a sufficiently large fixed \(A\),
therefore proves the upper bound in \eqref{eq:s5-3} in that range. At \(m=0\),
one may simply use \(\widehat b=1/2\) for squared risk.

To cover larger resolver counts, choose a fixed \(C_1\) such that
\((1-a)^2C_1/8>2\). For \(m>C_1\log n\), use only the resolver and
classify by

\[
\widehat b=1\{\|\sqrt m\bar U\|>(1-a)\sqrt m/2\}.
\]

Under the null, an error requires the norm of a standard \(d\)-variate
Gaussian to exceed \((1-a)\sqrt m/2\). Under the alternative, the mean
has norm at least \((1-a)\sqrt m\), so a missed detection requires the
same lower bound on the noise norm by the triangle inequality. Thus both
errors are bounded by

\[
C_d(1+m)^{d/2}\exp\{-(1-a)^2m/8\}=O(n^{-1})
\]

uniformly over this whole range, after increasing the fixed starting
value of \(n\) if necessary. Calibration may be ignored. Together with
\(\widehat x\), the resulting target risk is \(O(n^{-1})\), which is
sufficient for the upper bound in \eqref{eq:s5-3}.

For the finitely many remaining values of \(n\), including \(n=0\), use
the constant estimate \((a/2,1/2)\). Its risk is bounded, while the
right-hand comparison scale is at least \(1/(1+n)\). Increasing the
fixed constant handles all these counts and completes \eqref{eq:s5-3}.

\subsection{Sharp calibration law and resource consequences in the full experiment}
\label{app:s5-8}

The statistical transfer is now proved, so \ref{app:s4-3} may legitimately be
applied to \(R=\sqrt H\). Along any sequence with \(H\to\infty\) and
\(m\le C_0\log n\),

\begin{equation}
\mathcal E_d(n,m,k)\sim
K_dM_d(k)H^{(d-3)/4}e^{-H/8}
\left(1+\frac{2k}{H}\right)^{(d-1)/4},
\tag{S5.23}
\end{equation}

uniformly over all calibration counts. Here

\[
K_d=\frac{2^{2-d}}{\pi^{1/4}\sqrt{\Gamma(d/2)}}
\]

and \(M_d(k)\) denotes the Hellinger affinity of the Gaussian distribution
\(N_d(\sqrt k e_1,I_d)\) and the spherical mixture
\(\int N_d(\sqrt k u,I_d)\,d\mu_d(u)\), with affinity defined as
\(\int\sqrt{pq}\). These are orientation quantities, not new constants
inferred from a pairwise-distance calculation.

The all-information comparison \ref{app:s4-4} and \eqref{eq:s5-3} give an exact criterion
at bounded-logarithm resolution:

\begin{equation}
\begin{gathered}
\mathcal R_d(n,m_n,k_n)=O(n^{-1})\quad\Longleftrightarrow\quad\\
\frac H8+\frac12\log(1+H)
-\frac{d-1}{4}\log\left(1+\frac H{1+k_n}\right)
\ge\log n-O(1).
\end{gathered}\tag{S5.24}
\label{eq:s5-24}
\end{equation}

Here and below \(n\to\infty\) and \(H=H_\psi(n,m_n)\). This covers
arbitrary irregular count sequences. It is not a sharp constant for
squared target loss.

An explicit representative of the information window, uniform over
\textbf{all} calibration counts, is

\begin{equation}
h_d(\ell,k)=8\ell-4\log\ell+2(d-1)\log\left(1+\frac{\ell}{1+k}\right),\qquad \ell=\log n.\tag{S5.24a}
\label{eq:s5-24a}
\end{equation}

The boundary in \eqref{eq:s5-24} is \(H=h_d(\ell,k)+O(1)\), with a remainder
bounded uniformly in \(k\). To verify this without solving an implicit
equation, observe that \(h_d=8\ell+O_d(\log\ell)\) uniformly. Thus
\(\log(1+h_d)=\log\ell+O(1)\) and

\[
\log\left(1+\frac{h_d}{1+k}\right)
=\log\left(1+\frac{\ell}{1+k}\right)+O(1)
\]

uniformly over \(k\ge0\). Substitution into the left side of \eqref{eq:s5-24}
gives \(\ell+O_d(1)\). Its derivative with respect to \(H\) is

\[
\frac18+\frac1{2(1+H)}-\frac{d-1}{4(1+k+H)},
\]

which lies between fixed positive constants for all sufficiently large
\(H\), uniformly in \(k\). Therefore the bounded discrepancy in the
criterion changes the information boundary by only a bounded amount. For
example, when \(k\to\infty\) but \(k=o(\ell)\), \eqref{eq:s5-24a} becomes
\(8\ell+2(d-3)\log\ell-2(d-1)\log k+O(1)\); the unbounded \(\log k\)
correction must not be omitted for slowly growing calibration.

For \(k\asymp(\log n)^\rho\) with fixed \(0\le\rho\le1\), the critical
information window is

\begin{equation}
H=8\log n+\{2(d-1)(1-\rho)-4\}\log\log n+O(1).\tag{S5.25}
\end{equation}

Bounded \(k\), including \(k=0\), corresponds to \(\rho=0\). When
\(k\gtrsim\log n\), the known-direction information window
\(8\log n-4\log\log n+O(1)\) is recovered. The comparability constants
in a power shorthand must remain fixed; \eqref{eq:s5-24} is the authoritative
expression for other sequences.

For \(d\ge2\), relative to the known-direction classification experiment
in the same logarithmic regime, bounded-factor performance is obtained
if and only if \(k=\Omega(H)\). Relative error tending to one is
obtained if and only if \(k/H\to\infty\). These assertions require
\(H\to\infty\) and follow from \ref{app:s4-3} together with \eqref{eq:s5-4}; the
known-direction transfer follows from the same attenuation proof with
its scalar midpoint test. For \(d=1\), the direction nuisance has only
two values, so calibration changes bounded constants and creates no
unbounded power penalty.

The exact deterministic conversion from an information requirement
\(h>0\) to a continuous resolver count is unchanged:

\begin{equation}
M_h(n)=\inf\{m\ge0:H_\psi(n,m)\ge h\}
=\sup_{x\in[0,a]}\frac{h-n\psi(x)^2}{(1-x)^2}.
\tag{S5.26}
\label{eq:s5-26}
\end{equation}

The supremum is positive because \(x=0\) contributes \(h\). Increasing
\(m\) by \(\Delta m\) increases \(H\) by between \((1-a)^2\Delta m\) and
\(\Delta m\), so a bounded information error changes the required
resolver count by only a bounded amount. Integer rounding changes the
count by at most one. Calibration enters the statistical level \(h\)
through \eqref{eq:s5-24}; the common-coordinate geometry enters its conversion
\eqref{eq:s5-26}.

For the flat function \(\psi_\star(x)=\exp[-\tfrac12\exp(1/x^2)]\) on
\(x>0\), extended by zero at the origin, put \(\ell=\log n\) and
\(r(\ell)=(\log\ell)^{-1/2}\). The proved relation
\(H=m(1-r(\ell))^2+o(1)\) on logarithmic resolver ranges now yields

\begin{equation}
m=\frac{8\ell+\{2(d-1)(1-\rho)-4\}\log\ell}{[1-r(\ell)]^2}+O(1),\qquad k\asymp\ell^\rho.\tag{S5.27}
\end{equation}

Thus the calibration-adjusted orientation surcharge for \(0\le\rho<1\)
is still multiplied by the same contact-geometry factor. Adding only
\(2(d-1)(1-\rho)\log\ell\) to the known-direction raw count misses the
unbounded correction \(4(d-1)(1-\rho)\sqrt{\log\ell}+O(1)\) when
\(d\ge2\).

Under the actual scalar budget \(N=(d+1)n+d(m+k)\), regular squared risk
\(O(N^{-1})\) is attainable with \(n\asymp N\) and \(m=O(\log N)\).
Calibration is optional for that rate order. To obtain bounded-factor
known-direction classification one may also take \(k\asymp\log N\); to
make its relative classification penalty vanish one may take
\(\log N\ll k\ll N\), retaining an asymptotically vanishing calibration
fraction. No exact optimal allocation fraction or risk constant is
asserted.

\newpage

\section{Acquisition under a total resource cost}
\label{app:s6}

\subsection{Experiment, cost and attainable order}
\label{app:s6-1}

Fix the functions, parameter box, target and real Gaussian sources of
\ref{app:s2-1}. Write

\[
S(n,m)=\sup_{0\le r\le a}\chi(r)^2
\Phi\{-\sqrt{n\psi(r)^2+m(1-r)^2}\},
\qquad R(n,m)\asymp (1+n)^{-1}+S(n,m).
\]

Here \(n,m\) are nonnegative \textbf{integer block counts}, and the
comparison constants are independent of the counts. Fix positive block
costs \(c_F,c_G\), independent of the total budget \(B\), and define

\begin{equation}
\mathcal A_B=\{(n,m)\in\mathbb N_0^2:c_Fn+c_Gm\le B\},
\qquad R^*(B)=\min_{(n,m)\in\mathcal A_B}R(n,m).
\tag{S6.1}
\end{equation}

The minimum exists because this is a finite allocation set. A choice
\((c_F,c_G)=(5,2)\) counts scalar Gaussian measurements. Unit costs
count blocks. Arbitrary other fixed positive values represent an
explicitly supplied economic objective; they are not implied by the
source dimensions.

\begin{theorem}
For every fixed admissible \ref{app:s2-1} model,

\begin{equation}
R^*(B)\asymp (1+B)^{-1}.
\tag{S6.2}
\label{eq:s6-2}
\end{equation}

Every feasible allocation sequence achieving \(R(n_B,m_B)=O(B^{-1})\)
necessarily has \(n_B=\Omega(B)\). More precisely, a feasible sequence
is order optimal if and only if

\begin{equation}
n_B=\Omega(B),\qquad S(n_B,m_B)=O(B^{-1}).
\tag{S6.3}
\label{eq:s6-3}
\end{equation}

\end{theorem}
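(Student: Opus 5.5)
Everything reduces to the all-count comparison \(R(n,m)\asymp(1+n)^{-1}+S(n,m)\) of \ref{app:s2-1}, taken as given with count-independent constants. The theorem then becomes a statement about the deterministic function \((1+n)^{-1}+S(n,m)\) on the finite feasible set \(\mathcal A_B\). I would prove the lower half of \eqref{eq:s6-2}, then the matching upper half by an explicit allocation, and finally the characterization \eqref{eq:s6-3}.

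\textbf{Lower bound.} Every feasible pair has \(n\le B/c_F\). Hence \((1+n)^{-1}\ge(1+B/c_F)^{-1}\ge c(1+B)^{-1}\), with \(c\) depending only on \(c_F\). Since \(S\ge0\), this gives \(R^*(B)\ge c'(1+B)^{-1}\). The same inequality, applied along any feasible sequence with \(R(n_B,m_B)=O(B^{-1})\), gives \((1+n_B)^{-1}=O(B^{-1})\), that is \(n_B=\Omega(B)\). Because \(R\) dominates a constant times \(S\), it also gives \(S(n_B,m_B)=O(B^{-1})\), which is the necessity half of \eqref{eq:s6-3}. Conversely, if \eqref{eq:s6-3} holds, the upper comparison yields \(R=O(B^{-1})\), so sufficiency is immediate.

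\textbf{Upper bound.} It remains to exhibit a feasible allocation with \(S=O(B^{-1})\) and \(n\asymp B\). For large \(B\) I would take \(m_B=\lceil\gamma\log(1+B)\rceil\) with a fixed \(\gamma>2/(1-a)^2\), and put \(n_B=\lfloor(B-c_Gm_B)/c_F\rfloor\). For large \(B\) this is nonnegative and at least \(B/(2c_F)\). Then for every \(r\in[0,a]\) we have \(H_{n,m}(r)\ge m(1-r)^2\ge m(1-a)^2\ge\gamma(1-a)^2\log(1+B)\). Using \(\chi\le1\) and \(\Phi(-\sqrt x)\le e^{-x/2}\), this gives
\[
S(n_B,m_B)\le(1+B)^{-\gamma(1-a)^2/2}=O(B^{-1}).
\]
Bounded \(B\), where possibly no positive \(n\) is affordable, is absorbed into the constants, since \(R\) is bounded and \((1+B)^{-1}\) is bounded below there. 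Together these give \eqref{eq:s6-2}.

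\textbf{Main obstacle.} There is essentially none, because the statistics have already been done. The only care needed is uniformity. The constants must be independent of \(B\) and of the allocation, which is why I would invoke the all-count theorem rather than an asymptotic one. The integer rounding and the resolver cost \(c_Gm_B=O(\log B)\) must be checked to perturb \(n_B\) by only a constant factor.
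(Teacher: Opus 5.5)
Your proposal is correct and follows the paper's proof. The lower bound comes from $n\le B/c_F$ in the all-count comparison of \ref{app:s2-1}, which also gives both directions of (S6.3). Attainability uses the same allocation as the paper: $m_B=\lceil\gamma\log(1+B)\rceil$ with $\gamma>2/(1-a)^2$, the rest of the budget spent on the core, and bounded budgets absorbed into the constants.
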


\textbf{Proof.} The independent core-coordinate lower bound in \ref{app:s2-1}
gives

\[
R(n,m)\ge c(1+n)^{-1}\ge c(1+B/c_F)^{-1}.
\]

It also proves the necessity of \(n_B=\Omega(B)\). Each summand in the
\ref{app:s2-1} comparison is nonnegative, so the same comparison proves both
directions of \eqref{eq:s6-3}. For
attainability,
let \(\gamma>2/(1-a)^2\), set \(m_B=\lceil\gamma\log(1+B)\rceil\), and,
for sufficiently large \(B\), set

\[
n_B=\left\lfloor\frac{B-c_Gm_B}{c_F}\right\rfloor.
\]

Then \(n_B\sim B/c_F\), and \(0\le\chi\le1\) gives

\[
S(n_B,m_B)\le\Phi\{-(1-a)\sqrt{m_B}\}
\le\tfrac12e^{-(1-a)^2m_B/2}=O(B^{-1}).
\]

All counts are integers and the cost inequality holds exactly. The
finitely many smaller budgets can use zero
observations,
whose bounded target loss is absorbed by a fixed comparison constant.
This proves \eqref{eq:s6-2}. \(\square\)

The logarithmic sufficient allocation above is conservative. It makes no
necessity claim for arbitrary \(\chi\): for example, \(\chi\equiv0\) has
regular core-only risk and requires no resolver.

\subsection{Complete order-optimal characterization for the polynomial--flat family}
\label{app:s6-2}

Now fix \(0<\alpha<1\), and use the explicit functions

\[
\psi(r)=\exp\{-\tfrac12\exp(1/r^2)\},\qquad
\chi(r)=\psi(r)^\alpha\quad(r\ne0),
\]

both defined as zero at the origin. For sufficiently large \(\ell\), put

\begin{equation}
v(\ell)=1-(\log\ell)^{-1/2},\qquad
W_\alpha(\ell)=
\frac{2(1-\alpha)\ell-\log\ell}{v(\ell)^2}.
\tag{S6.4}
\end{equation}

All logarithms are natural. In particular, the radius
\((\log\ell)^{-1/2}\) lies in the fixed interval \([0,a]\) once \(\ell\)
is sufficiently large. Define \(\ell_B=\log(B/c_F)\) and
\(W_B=W_\alpha(\ell_B)\).

\begin{theorem}
For feasible integer allocations with
\(B\to\infty\),

\begin{equation}
R_\alpha(n_B,m_B)=O(B^{-1})
\quad\Longleftrightarrow\quad
n_B=\Omega(B),\qquad m_B\ge W_B-O(1).
\tag{S6.5}
\end{equation}

The expression \(m_B\ge W_B-O(1)\) means that \(W_B-m_B\) is bounded
above along the sequence. It allows arbitrary oversampling. It does not
mean that all optimal allocations have \(m_B=W_B+O(1)\).
\end{theorem}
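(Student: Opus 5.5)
The plan is to reduce to the preceding theorem of \ref{app:s6-1}: a feasible sequence has $R_\alpha(n_B,m_B)=O(B^{-1})$ if and only if $n_B=\Omega(B)$ and $S(n_B,m_B)=O(B^{-1})$. It therefore suffices to show that, along sequences with $n_B=\Omega(B)$, the condition $S(n_B,m_B)=O(B^{-1})$ is equivalent to $m_B\ge W_B-O(1)$. The first step converts $n_B$ into $B$. Feasibility gives $n_B\le B/c_F$, and $n_B=\Omega(B)$ gives $n_B\ge cB$. Hence $\ell:=\log n_B=\ell_B+O(1)$, and $O(B^{-1})$ is the same as $O(n_B^{-1})$. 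Since $W_\alpha'(\ell)$ stays bounded (the paper notes this), $W_\alpha(\log n_B)=W_B+O(1)$. The claim is thus reduced to the statement, along $n\to\infty$, that $S(n,m)=O(n^{-1})$ if and only if $m\ge W_\alpha(\log n)-O(1)$.

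For the reduced claim I will split on the size of $m$. In the range $m\le C_0\ell$, for a fixed $C_0>2/(1-a)^2$, I invoke \eqref{eq:s2-22}:
\[
S\asymp n^{-\alpha}(1+m)^{-1/2}e^{-mv_\ell^2/2}.
\]
Taking logarithms, $S=O(n^{-1})$ is then equivalent to $\tfrac12 mv_\ell^2+\tfrac12\log(1+m)\ge(1-\alpha)\ell-O(1)$, exactly the criterion displayed after (S2.25). The left side is increasing in $m$ with derivative bounded between positive constants, and its critical $m$ is of order $\ell$, so $\log(1+m)=\log\ell+O(1)$ there. This makes the criterion equivalent to $m\ge W_\alpha(\ell)-O(1)$. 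In the range $m>C_0\ell$, the inequality $m\ge W_\alpha(\ell)$ holds automatically for large $\ell$, because $W_\alpha(\ell)\sim2(1-\alpha)\ell$. On the other side, $S\le\Phi(-(1-a)\sqrt m)\le e^{-(1-a)^2m/2}=O(n^{-1})$ by the choice of $C_0$, so both sides of the equivalence hold. Bounded $m$ falls inside the first range, where $S\asymp n^{-\alpha}\gg n^{-1}$ and $m<W-O(1)$, so both sides fail.

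The step I expect to be the main obstacle is making the "$O(1)$'' bookkeeping uniform along arbitrary, irregular sequences, which may oscillate between the two ranges and have only $n_B=\Omega(B)$ rather than $n_B\sim B/c_F$. I would handle it by arguing by subsequences. If $W_B-m_B$ is unbounded along some subsequence, that subsequence lies in the logarithmic range for large $B$. There, \eqref{eq:s2-22} and the monotone criterion give $S\,n_B\to\infty$, so the risk is not $O(B^{-1})$. Conversely, if $W_B-m_B\le K$, the criterion holds with a slack depending only on $K$ and on the constant in $n_B\ge cB$. This uses the uniformity of \eqref{eq:s2-22} over $0\le m\le C_0\ell$ and the bounded derivative of $W_\alpha$ to absorb the $\ell$ versus $\ell_B$ shift. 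Integer rounding and the choice of admissible $n_B$ given $m_B$ only add bounded terms, as noted after Theorem~\ref{thm:cost-allocation}.
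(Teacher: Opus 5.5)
Your proposal is correct and follows essentially the same route as the paper's proof. The core floor (through the characterization in \ref{app:s6-1}) forces $n_B=\Omega(B)$, and the bounded derivative of $W_\alpha$ converts $\log n_B$ to $\ell_B$; the uniform logarithmic-range comparison \eqref{eq:s2-22} with the monotone criterion handles $m\le C_0\log n$, while the choice $C_0>2/(1-a)^2$ and the elementary Gaussian tail bound handle larger $m$. Your subsequence bookkeeping only makes explicit the uniformity that the paper states through its $O(1)$ terms.
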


\textbf{Proof.} First suppose \(n_B=\Omega(B)\). Feasibility gives
\(n_B\le B/c_F\), so \(\log n_B=\ell_B+O(1)\), with bounds depending on
the positive lower allocation fraction. Direct differentiation gives

\[
W_\alpha'(\ell)=
\frac{2(1-\alpha)-\ell^{-1}}{v(\ell)^2}
-\frac{2\{2(1-\alpha)\ell-\log\ell\}v'(\ell)}{v(\ell)^3},
\quad
v'(\ell)=\frac{1}{2\ell(\log\ell)^{3/2}}.
\]

Thus \(W_\alpha'(\ell)\to2(1-\alpha)\), and

\begin{equation}
W_\alpha(\log n_B)=W_B+O(1).
\tag{S6.6}
\label{eq:s6-6}
\end{equation}

For every fixed \(C_0\), \ref{app:s2-3} proves, uniformly over
\(0\le m\le C_0\log n\),

\begin{equation}
R_\alpha(n,m)\asymp n^{-1}
+n^{-\alpha}(1+m)^{-1/2}e^{-m v(\log n)^2/2}.
\tag{S6.7}
\end{equation}

In this range the ambiguity term is \(O(n^{-1})\) exactly when

\[
\tfrac12m v(\log n)^2+\tfrac12\log(1+m)
\ge(1-\alpha)\log n-O(1),
\]

equivalently \(m\ge W_\alpha(\log n)-O(1)\). For this last equivalence,
if \(m\) is in a fixed constant neighborhood of the threshold in units
of \(\log n\), then \(\log(1+m)=\log\log n+O(1)\); below any smaller
positive logarithmic fraction the inequality fails, and above any larger
one it holds. The function of \(m\) on the left is increasing with
derivative bounded below by a positive constant for sufficiently large
\(n\).

To cover all counts, choose a fixed \(C_0>2/(1-a)^2\). If
\(m>C_0\log n\), then the elementary bound used in \ref{app:s6-1} makes
\(S(n,m)=O(n^{-1})\), and \(m\ge W_\alpha(\log n)-O(1)\) holds
automatically,
since \(W_\alpha(\log n)/\log n\to2(1-\alpha)<2<C_0\). Combining the two
count ranges and \eqref{eq:s6-6} proves sufficiency and the resolver necessity
when \(n_B=\Omega(B)\). The latter core condition is necessary by \ref{app:s6-1},
completing the equivalence. \(\square\)

This is a second-stage minimum-resource statement only at bounded-count
resolution: among feasible allocations that attain the regular order, no
sequence can save an unbounded number of resolver blocks below \(W_B\),
whereas a sequence at \(W_B+O(1)\) suffices. There is no universal
smallest bounded additive constant. Its value depends on the specified
risk multiplier and on uncomputed minimax constants.

\subsection{Canonical integer allocation and negligible budget feedback}
\label{app:s6-3}

Fix any real constant \(s\). For all sufficiently large \(B\), choose

\begin{equation}
m_B=\lceil W_B+s\rceil,
\qquad
n_B=\left\lfloor\frac{B-c_Gm_B}{c_F}\right\rfloor.
\tag{S6.8}
\end{equation}

Then

\begin{equation}
m_B=O(\log B),\quad n_B\sim B/c_F,\quad
R_\alpha(n_B,m_B)\asymp B^{-1}.
\tag{S6.9}
\end{equation}

Indeed \(n_B=B/c_F+O(\log B)\), hence

\begin{equation}
\log n_B-\ell_B=O(\log B/B),\qquad
W_\alpha(\log n_B)-W_B=O(\log B/B)=o(1).
\tag{S6.10}
\end{equation}

Thus evaluating the acquisition window at \(B/c_F\) and subsequently
subtracting the resolver cost creates a vanishing change in the
real-valued window. Rounding \(m_B\) changes the cost and resolver count
by bounded amounts; rounding \(n_B\) changes it by less than one block.
The integer construction is therefore valid at the claimed bounded-count
resolution. This proof relies on fixed positive costs. It does not cover
a resolver price growing with \(B\), where a logarithmic resolver
allocation may consume a nonvanishing budget fraction.

At scalar-measurement cost \((c_F,c_G)=(5,2)\), the construction has
\(5n_B+2m_B\le B\). Its scientific interpretation remains an asymptotic
result in a fixed, extremely flat Gaussian model. The radius condition
can demand extraordinarily large budgets; no direct practical
acquisition recommendation follows merely from the asymptotic formula.

\subsection{A proxy optimizer has a different critical level}
\label{app:s6-4}

The minimum resolver count for the regular \textbf{order} must not be
relabelled as the optimizer of the total risk. A transparent
countercheck is supplied by an exactly specified proxy. Fix \(A>0\),
and, for large \(B\), put \(n_0=B/c_F\), \(\ell=\log n_0\),
\(a_B=v(\ell)^2/2\). Define a continuous cost-constrained objective

\begin{equation}
\Pi_B(m)=\frac{c_F}{B-c_Gm}
+A n_0^{-\alpha}(1+m)^{-1/2}e^{-a_Bm},
\qquad 0\le m<B/c_G.
\tag{S6.11}
\label{eq:s6-11}
\end{equation}

The first term is the regular core-risk proxy after its exact economic
cost is paid. The second is the proved tail comparison with the slowly
varying core scale frozen at the affordable core count. This is a
\textbf{defined optimization problem}, not an identity for \(R_\alpha\).

\setcounter{theorem}{3}
\begin{proposition}
The objective \eqref{eq:s6-11} has a unique minimizer
\(m_B^{\rm proxy}\), and

\begin{equation}
m_B^{\rm proxy}=
\frac{2(2-\alpha)\ell-\log\ell}{v(\ell)^2}+O(1).
\tag{S6.12}
\label{eq:s6-12}
\end{equation}

The exact integer minimizer of the same proxy lies at one of the two
adjacent integers and therefore obeys the same formula.
\end{proposition}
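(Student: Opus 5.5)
The plan is a direct calculus argument on \eqref{eq:s6-11}.

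\emph{Strict convexity.} The first term \(c_F/(B-c_Gm)\) is strictly convex on \([0,B/c_G)\) and tends to \(+\infty\) at the right end. The second term is a product of the positive, log-convex factors \((1+m)^{-1/2}\) and \(e^{-a_Bm}\). It is therefore log-convex, hence convex, and it is smooth. The sum \(\Pi_B\) is thus strictly convex and continuous, and it blows up at \(B/c_G\).

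\emph{Existence, uniqueness, interior location.} The derivative at \(m=0\) is negative for large \(B\). There the first-term slope is \(c_Gc_F/B^2\), which is of order \(B^{-2}\). The second-term slope is about \(-An_0^{-\alpha}(a_B+1/2)\), which is of order \(-B^{-\alpha}\) and dominates. Hence the minimizer exists, is unique, and is interior. It solves
\[
\frac{c_Fc_G}{(B-c_Gm)^2}=An_0^{-\alpha}(1+m)^{-1/2}e^{-a_Bm}\Bigl(a_B+\frac{1}{2(1+m)}\Bigr).
\]

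\emph{Asymptotics of the minimizer.} I will first localize the root to \(m\in[\epsilon\ell,C\ell]\). The derivative is negative at \(m=\epsilon\ell\) for small \(\epsilon\), because there the right side is of order \(B^{-\alpha-\epsilon a_B}\gg B^{-2}\). It is positive at \(m=C\ell\) with \(C\) large, because there the right side is of order \(B^{-\alpha-Ca_B}\ll B^{-2}\).

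On this range I use \(B-c_Gm=B(1+O(\ell/B))\) and \(B^{-2}=c_F^{-2}n_0^{-2}\). The factor \(a_B+1/(2(1+m))\) equals \(a_B+O(1/\ell)\) and is bounded above and below. Taking logarithms, the stationarity equation becomes
\[
a_Bm+\tfrac12\log(1+m)=(2-\alpha)\ell+O(1).
\]
Inside this range \(\log(1+m)=\log\ell+O(1)\), which gives
\[
m=\frac{(2-\alpha)\ell-\tfrac12\log\ell}{a_B}+O(1)=\frac{2(2-\alpha)\ell-\log\ell}{v(\ell)^2}+O(1),
\]
and this is \eqref{eq:s6-12}.

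The main obstacle is keeping the \(O(1)\) honest. Writing \(\Lambda(m)=a_Bm+\tfrac12\log(1+m)\), the left side is increasing with slope at least \(a_B\ge c>0\) for large \(\ell\). A bounded additive error in the equation therefore moves the root by only a bounded amount. The same reasoning handles replacing \(\log(1+m)\) by \(\log\ell\), since \(m/\ell\) is bounded above and below on the localized range.

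\emph{Integer minimizer.} By strict convexity, \(\Pi_B\) is decreasing before \(m_B^{\rm proxy}\) and increasing after it. Its minimum over the integers is therefore attained at \(\lfloor m_B^{\rm proxy}\rfloor\) or \(\lceil m_B^{\rm proxy}\rceil\). Both differ from \(m_B^{\rm proxy}\) by at most one, so the integer minimizer obeys the same formula.
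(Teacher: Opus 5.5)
Your proof is correct and follows the paper's argument essentially step for step. It uses strict convexity, a negative derivative at $m=0$ together with blow-up at $B/c_G$, bracketing of the root at $m=\Theta(\ell)$ by testing multiples of $\ell$, logarithms of the stationarity equation with $\log(1+m)=\log\ell+O(1)$, and convexity to place the integer minimizer at an adjacent integer. The only differences are cosmetic: you get convexity of the second term from log-convexity rather than from the paper's explicit second-derivative identity, and you bracket with arbitrary small and large constants instead of constants just below and above $(2-\alpha)/a_B$.
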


\textbf{Proof.} Both terms in \eqref{eq:s6-11} are strictly convex. For the
second, its logarithmic derivative is \(-a_B-1/[2(1+m)]\), and its
second derivative divided by itself is

\[
\left(a_B+\frac1{2(1+m)}\right)^2+
\frac1{2(1+m)^2}>0.
\]

The derivative at zero is negative for sufficiently large \(B\), because
its positive component is of order \(n_0^{-2}\), whereas its negative
component has order \(n_0^{-\alpha}\). The objective diverges at the
other boundary. Its unique minimizer solves

\begin{equation}
\frac{c_Fc_G}{(B-c_Gm)^2}
=A n_0^{-\alpha}(1+m)^{-1/2}e^{-a_Bm}
\left(a_B+\frac1{2(1+m)}\right).
\tag{S6.13}
\label{eq:s6-13}
\end{equation}

Testing \(m=c\ell\) with constants respectively below and above
\((2-\alpha)/a_B\), with a fixed positive margin, brackets the root and
proves \(m=\Theta(\ell)\). Taking logarithms of \eqref{eq:s6-13} now gives

\begin{equation}
a_Bm+\tfrac12\log(1+m)
=(2-\alpha)\ell+
\log\!\left\{\frac{Ac_F}{c_G}
\left(a_B+\frac1{2(1+m)}\right)\right\}
+2\log\left(1-\frac{c_Gm}{B}\right).
\tag{S6.14}
\end{equation}

The last term is \(o(1)\), the preceding logarithm is bounded, and
\(\log(1+m)=\log\ell+O(1)\). This proves \eqref{eq:s6-12}. Strict convexity makes
the integer minimizer one of the neighboring integers. \(\square\)

At this proxy optimum the ambiguity term has order \(B^{-2}\), because
\eqref{eq:s6-13} balances its marginal decrease against a core-cost derivative of
order \(B^{-2}\). At the least allocation that recovers the regular
order, the ambiguity term is merely of order \(B^{-1}\). Thus the proxy
consumes an additional

\[
\frac{2\log(B/c_F)}{v(\log(B/c_F))^2}+O(1)
\]

resolver observations. Both allocations attain risk order \(B^{-1}\).
The fact that they have different leading logarithmic counts decisively
prohibits using order-optimality alone as a proof of exact total-risk
optimization.

Changing the uncalculated proxy multiplier \(A\) shifts the bounded term
in \eqref{eq:s6-12}. More generally, the exact minimax risk of the
flat-coordinate model and its derivatives are not supplied by \ref{app:s2-1}.
Neither \eqref{eq:s6-12} nor the adjacent-integer statement is therefore asserted
for its true minimax optimizer.

\newpage

\section{Uniform repair of a polynomial tangential fibre}
\label{app:s7}

This section gives the complete proof and constructive estimator for
Theorem 5. The target is the ordinary real coordinate \(t\); the
nuisance parameter is shared by all observations. The auxiliary contact
order is denoted \(k_{\mathrm{tan}}\), separately from the calibration
count \(k\) used in the orientation experiment.

\subsection{Experiment, fibres, and the uniform statement}
\label{app:s7-1}

Fix integers \(p\ge2\), \(1\le q<p\), \(k_{\mathrm{tan}}\ge1\), and
constants \(T,E_*>0\). For \(b\in[-2,2]\), define 
\[
z(b)=b^2-1\in[-1,3],\qquad f_j(b)=bz(b)^j\quad(j\ge1).
\]
 On the full box \(K=[-2,2]\times[-T,T]\), independently observe 
\begin{equation}
X_i\sim N_3\big((t-f_q(b),z(b),f_p(b)),I_3\big),\quad 1\le i\le n,
\tag{S7.1}
\end{equation}
 
\begin{equation}
Y_l\sim N\big(\varepsilon f_{k_{\mathrm{tan}}}(b),1\big),\quad 1\le l\le m.
\tag{S7.2}
\end{equation}
 Here \(n,m\in\mathbb N_0\) and the gain \(\varepsilon\in[0,E_*]\) is
known. These are independent real unit-variance Gaussian coordinates.
Counts refer to repeated blocks with one common \((b,t)\); their scalar
cost is \(3n+m\). Write \(w=m\varepsilon^2\) and 
\begin{equation}
\Psi(n,w)=\frac1{1+n}+\min\{(1+n)^{-q/p},(1+w)^{-q/k_{\mathrm{tan}}}\}.
\tag{S7.3}
\label{eq:s7-3}
\end{equation}
 Theorem 5 asserts
\(c\Psi(n,w)\le\mathcal R_\varepsilon(n,m)\le C\Psi(n,w)\), where
\(\mathcal R_\varepsilon\) is the minimax expected squared error in
\(t\) over \(K\). The positive constants depend on the fixed box and
degrees, and are independent of the counts and gain. In particular, gain
zero and a zero auxiliary count are included.

The core mean is immersive on the entire box. Its derivative in \(t\)
isolates the first coordinate. Its derivative in \(b\) has second
coordinate \(2b\), and at \(b=0\) has third coordinate
\(f_p'(0)=(-1)^p\ne0\). Equality of two core means first implies \(c=b\)
or \(c=-b\). A distinct pair must satisfy \(b\ne0\) and
\(b(b^2-1)^p=0\), hence \(b=\pm1\). At these points \(f_q=0\), so the
equal means have equal target coordinates. The only nontrivial fibres
are therefore \(\{(1,t),(-1,t)\}\), \(-T\le t\le T\). Target
identification already holds at zero gain.

For the augmented nuisance curve
\((z,f_p,\varepsilon f_{k_{\mathrm{tan}}})\), the tangent vectors at
\(b=1,-1\) are \((2,0,2\varepsilon)\) and \((-2,0,2\varepsilon)\) when
\(k_{\mathrm{tan}}=1\). Their cross product has squared norm
\(64\varepsilon^2\). For \(k_{\mathrm{tan}}>1\) the last derivatives
vanish and these tangents remain parallel. The theorem treats the
specified polynomial geometry directly; it does not obtain gain-uniform
constants by substituting a varying gain into a fixed-geometry theorem.

\subsection{Lower bounds from actual common-parameter alternatives}
\label{app:s7-2}

For \(z>0\) sufficiently small, take 
\begin{equation}
b_\pm=\pm\sqrt{1+z},\qquad t_\pm=\pm\sqrt{1+z}\,z^q.
\tag{S7.4}
\label{eq:s7-4}
\end{equation}
 The first core means both vanish, and the second both equal \(z\).
The last core means differ by \(2\sqrt{1+z}\,z^p\), and the auxiliary
means differ by \(2\varepsilon\sqrt{1+z}\,z^{k_{\mathrm{tan}}}\).
Consequently, 
\begin{equation}
(t_+-t_-)^2=4(1+z)z^{2q},\qquad
\operatorname{KL}(P_+\Vert P_-)=2(1+z)\{nz^{2p}+wz^{2k_{\mathrm{tan}}}\}.
\tag{S7.5}
\end{equation}
 The Gaussian divergence contains the real-noise factor \(1/2\); no
proper-complex normalization is used here.

Choose a fixed \(0<\gamma<1\) small enough that
\(\sqrt{1+\gamma}\gamma^q<T\) and
\(4(\gamma^{2p}+\gamma^{2k_{\mathrm{tan}}})\le1/8\). Set 
\begin{equation}
z=\gamma\min\{(1+n)^{-1/(2p)},(1+w)^{-1/(2k_{\mathrm{tan}})}\}.
\tag{S7.6}
\end{equation}
 Both alternatives belong to the declared box for every count and
gain, and their divergence is at most \(1/8\). For any estimator,
nearest-target classification and the two-point testing identity give 
\begin{equation}
\max_{\pm}\mathbb E_\pm(\widehat t-t_\pm)^2
\ge\frac{(t_+-t_-)^2}{8}\{1-\operatorname{TV}(P_+,P_-)\}.
\tag{S7.7}
\end{equation}
 Pinsker's inequality \cite[Lemma~2.5]{Tsybakov2009} bounds the total variation away from one.
Substitution proves the lower bound for the minimum in \eqref{eq:s7-3}.

For a separate floor, fix \(b=1\) and use
\(t_\pm=\pm\gamma_0(1+n)^{-1/2}\) with fixed \(0<\gamma_0<T\). The
auxiliary observations have identical laws under these
alternatives,
and the core divergence is at most \(2\gamma_0^2\). Taking \(\gamma_0\)
sufficiently small gives
\(\mathcal R_\varepsilon(n,m)\ge c_0(1+n)^{-1}\) uniformly in \(m\) and
\(\varepsilon\). The maximum of these two lower bounds controls their
sum up to a factor two. Neither argument assigns a different nuisance
value to different sources under one hypothesis.

\subsection{Explicit estimator}
\label{app:s7-3}

At \(n=0\), return zero. Suppose \(n\ge1\), and write the three core
sufficient means as \((\overline X,\overline U,\overline V)\). If
\(m>0\), denote the auxiliary mean by \(\overline Y\). Define 
\[
\widehat z=\operatorname{clip}_{[-1,3]}(\overline U),\qquad Q_*=2\,3^q,
\]
 and set \(\operatorname{sign}(0)=1\). Choose the auxiliary detector
precisely when 
\begin{equation}
w>0\quad\hbox{and}\quad (1+w)^p>(1+n)^{k_{\mathrm{tan}}};
\tag{S7.8}
\label{eq:s7-8}
\end{equation}
 otherwise choose the core detector. This is equivalent to selecting
the smaller rate in \eqref{eq:s7-3}, and ties use the core. The choice depends
only on known quantities.

If \(|\widehat z|>1/2\), set 
\begin{equation}
\widehat Q=\operatorname{clip}_{[-Q_*,Q_*]}
\{\overline V\widehat z^{\,q-p}\}.
\tag{S7.9}
\end{equation}
 The negative power is evaluated only where its argument is bounded
away from zero. If \(|\widehat z|\le1/2\), set 
\[
\widehat s=
\begin{cases}
\operatorname{sign}(\overline V)\operatorname{sign}(\widehat z)^p,&\text{core detector},\\
\operatorname{sign}(\overline Y)\operatorname{sign}(\widehat z)^{k_{\mathrm{tan}}},&\text{auxiliary detector},
\end{cases}
\]
 and 
\begin{equation}
\widehat Q=\widehat s\sqrt{1+\widehat z}\,\widehat z^q.
\tag{S7.10}
\end{equation}
 Finally return 
\begin{equation}
\widehat t=\operatorname{clip}_{[-T,T]}(\overline X+\widehat Q).
\tag{S7.11}
\end{equation}
 All branches and ties are specified, so this is a Borel estimator. It
uses fixed-degree arithmetic, clipping, one sign decision, and one
square root, without nonlinear optimization. This arithmetic description
does not assert optimal bit complexity or automatic accuracy of ordinary
floating point at arbitrary counts.

\subsection{Upper bound}
\label{app:s7-4}

For \(n=0\), bounded loss gives risk at most \(T^2\), while
\(\Psi(0,w)\ge1\). Now take \(n\ge1\), abbreviate \(z=z(b)\), and put
\(\delta=|\widehat z-z|\). Contractivity of clipping gives 
\begin{equation}
\mathbb E\delta^2\le n^{-1},\qquad
\Pr\{\delta>1/4\}\le2e^{-n/32}.
\tag{S7.12}
\end{equation}
 The target estimate and correction are bounded. The event
\(\delta>1/4\) thus contributes at most \(Ce^{-n/32}\le C'/n\).

On the remaining event and in the far region, \(|z|\ge1/4\) and
\(z,\widehat z\) have the same sign. On each relevant compact interval
the function \(u\mapsto u^{q-p}\) is bounded and Lipschitz. Since
\(f_p\) is bounded on the parameter interval, 
\begin{equation}
|\overline V\widehat z^{q-p}-f_q(b)|
\le C\{|\overline V-f_p(b)|+\delta\}.
\tag{S7.13}
\end{equation}
 Clipping cannot increase the error because \(|f_q(b)|\le Q_*\). The
squared expectation in this region is therefore \(O(n^{-1})\).

In the near region with \(\delta\le1/4\), both \(z\) and \(\widehat z\)
lie in \([-3/4,3/4]\), and \(|b|=\sqrt{1+z}\ge1/2\). The function
\(M_q(u)=\sqrt{1+u}\,u^q\) is Lipschitz on that interval. Since
\(f_q(b)=\operatorname{sign}(b)M_q(z)\), 
\begin{equation}
|\widehat Q-f_q(b)|^2
\le C\delta^2+C|z|^{2q}\mathbf1\{\widehat s\ne\operatorname{sign}(b)\}.
\tag{S7.14}
\label{eq:s7-14}
\end{equation}
 If the signs of \(z\) and \(\widehat z\) differ, then
\(|z|\le\delta\); because \(q\ge1\) and the interval is bounded,
\(|z|^{2q}\le C\delta^2\). This includes the zero-contact convention and
does not presume that the sign of the directly estimated contact
coordinate is known.

When the contact-coordinate signs agree, a wrong nuisance sign requires
a wrong sign of the selected Gaussian amplitude mean. Its probability is

\begin{equation}
\Phi(-\sqrt n\,|b|\,|z|^p)
\quad\hbox{or}\quad
\Phi(-\sqrt w\,|b|\,|z|^{k_{\mathrm{tan}}}),
\tag{S7.15}
\end{equation}
 respectively. The direct shape mean is independent of both detector
means. At \(z=0\), the weight \(|z|^{2q}\) vanishes.

For every fixed integer \(j\ge1\) and \(v\ge0\), 
\begin{equation}
\sup_{0\le u\le3/4}u^{2q}\Phi(-\tfrac12\sqrt v\,u^j)
\le C_{j,q}(1+v)^{-q/j}.
\tag{S7.16}
\label{eq:s7-16}
\end{equation}
 For \(v\le1\), this follows from boundedness. For \(v>1\), set
\(a=\sqrt v\,u^j\) and bound the expression by
\(v^{-q/j}\sup_{a\ge0}a^{2q/j}\Phi(-a/2)\), whose supremum is finite.
Equations
\eqref{eq:s7-14}--\eqref{eq:s7-16}
give \(C/n\) plus the selected rate. The deterministic selection in
\eqref{eq:s7-8} makes this rate the minimum in \eqref{eq:s7-3}.

Finally, \(\overline X\) has mean \(t-f_q(b)\) and variance \(1/n\). A
squared triangle inequality and the final clipping give the desired
upper bound; \(1/n\le2/(1+n)\) for \(n\ge1\). Together with \ref{app:s7-2} this
proves Theorem 5.

\subsection{Sufficient numerical accuracy}
\label{app:s7-5}

\begin{proposition}[Order-preserving represented-mean contract]
Fix \(0\le c_0<\infty\). Replace each
primary sufficient mean by any measurable representation with absolute
error at most \(c_0n^{-1/2}\). If the auxiliary mean is used, represent
it with error at most \(c_0m^{-1/2}\). Preserve the known-quantity
detector decision \eqref{eq:s7-8}, and allow correction-arithmetic error at most
\(c_0n^{-1/2}\) before the final target clipping. The upper bound of
Theorem 5 remains valid, with constants also depending on \(c_0\).
\end{proposition}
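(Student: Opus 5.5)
Let \(X^\#,U^\#,V^\#\) (and \(Y^\#\) when \(m>0\)) denote the represented means, with \(|X^\#-\overline X|,|U^\#-\overline U|,|V^\#-\overline V|\le c_0n^{-1/2}\) and \(|Y^\#-\overline Y|\le c_0m^{-1/2}\). I will rerun the upper-bound argument of \ref{app:s7-4} verbatim with these inputs and check that every bound changes only by constants depending on \(c_0\). The detector choice \eqref{eq:s7-8} uses only known quantities, so it is unchanged, and the selected rate is still the minimum in \eqref{eq:s7-3}. The case \(n=0\) and the final clipping are unaffected.

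First, the regular parts. Set \(\widehat z=\operatorname{clip}_{[-1,3]}(U^\#)\) and \(\delta=|\widehat z-z|\). Then \(\mathbb E\delta^2\le(1+c_0)^2n^{-1}\), and \(\Pr\{\delta>1/4\}\) is \(O(n^{-1})\): for \(n\) large, \(c_0n^{-1/2}\le1/8\) and Gaussian tails apply, while small \(n\) is absorbed by bounded loss. In the far region, (S7.13) holds with \(|V^\#-f_p(b)|\le|\overline V-f_p(b)|+c_0n^{-1/2}\). In the near region, \eqref{eq:s7-14} still holds with the new \(\delta\), and the sign-disagreement argument \(|z|\le\delta\) is unchanged. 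The \(X^\#\) error and the correction-arithmetic error, both at most \(c_0n^{-1/2}\), add \(O(n^{-1})\) in squared loss by the squared triangle inequality.

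The step I expect to be the main obstacle is the wrong-sign probability (S7.15), because representation may shift the sign threshold of the Gaussian mean. Here the shift is only \(c_0\) in standardized units. Write \(V^\#=f_p(b)+n^{-1/2}(Z+\xi)\) with \(|\xi|\le c_0\). A wrong sign then requires \(Z+\xi\) to cross \(-\sqrt n|b||z|^p\), which has probability at most \(\Phi(-\sqrt n|b||z|^p+c_0)\). The auxiliary detector is identical with \(m^{-1/2}\) scaling, giving \(\Phi(-\sqrt w|b||z|^{k_{\mathrm{tan}}}+c_0)\).

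No relative tail precision is needed, because the target is only the polynomial order. I will replace \eqref{eq:s7-16} by
\[
\sup_{0\le u\le3/4}u^{2q}\Phi\bigl(-\tfrac12\sqrt v\,u^j+c_0\bigr)\le C_{j,q,c_0}(1+v)^{-q/j}.
\]
This is proved the same way: set \(a=\sqrt v\,u^j\) and use \(\sup_{a\ge0}a^{2q/j}\Phi(-a/2+c_0)<\infty\), which holds since the Gaussian tail still dominates any power. Independence of the shape mean from the detector means is not needed beyond what the original argument uses, since the sign bound holds conditionally on any deterministic shift of size at most \(c_0\). Measurability of the representation keeps \(\widehat t\) an estimator.

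Summing these contributions reproduces \(C\Psi(n,w)\) with constants depending additionally on \(c_0\), which proves the proposition.
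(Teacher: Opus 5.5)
Your proposal is correct and follows the paper's proof essentially step for step: it inflates the shape-error second moment and tail probability by \(c_0\)-dependent constants, reuses the far- and near-region bounds, enlarges the wrong-sign event to a Gaussian event with standardized threshold \(c_0-\sqrt v\,|b|\,|z|^j\), and replaces \(\Phi(-a/2)\) by \(\Phi(c_0-a/2)\) in the weighted tail bound. One wording fix: the representation may depend on all of the data, so the justification is not conditioning on a deterministic shift. It is the pointwise containment of the event that \(Z+\xi\) crosses \(-\sqrt v\,|b|\,|z|^j\) in the event that \(Z\) alone crosses that threshold shifted by \(c_0\). Your ``requires'' step already uses this containment, and the paper argues exactly this way.
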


The represented shape error has second moment at most \(C(c_0)/n\). For
sufficiently large \(n\), exceeding \(1/4\) requires a Gaussian
deviation of a fixed positive size, so its probability is at most
\(Ce^{-cn}\); bounded loss absorbs the remaining finite range of \(n\).
The far-region argument is unchanged. A wrong detector sign after
bounded perturbation is contained in a Gaussian event with standardized
threshold 
\[
c_0-\sqrt v\,|b|\,|z|^j,
\]
 where \((v,j)=(n,p)\) or \((w,k_{\mathrm{tan}})\). The weighted tail
bound \eqref{eq:s7-16} remains true with \(\Phi(c_0-a/2)\) in place of
\(\Phi(-a/2)\). This containment is pointwise, so the representation
errors may depend on the original data; their independence is
unnecessary. The additional correction error contributes \(O(n^{-1})\),
proving the proposition.

This contract preserves a polynomial worst-case risk order. It does not
preserve each rare binary tail in relative error, certify unrestricted
floating-point computation, or allow an unknown error in gain. For a
nonnegative rational represented gain \(\varepsilon=A/D\), decision
\eqref{eq:s7-8} can be made exactly by comparing the integers 
\begin{equation}
(D^2+mA^2)^p\quad\hbox{and}\quad(1+n)^{k_{\mathrm{tan}}}D^{2p}.
\tag{S7.17}
\end{equation}
 A finite binary floating-point gain has an exact rational
representation,
so the same comparison specifies its tie correctly. Numerical
approximation of the true gain itself remains outside the stated
contract.

\subsection{Resource consequences and binary normalization}
\label{app:s7-6}

Since \(q<p\), the core-only order is \((1+n)^{-q/p}\). Along
\(n\to\infty\), the matching theorem implies a strict order improvement
if and only if 
\begin{equation}
\frac{m\varepsilon^2}{n^{k_{\mathrm{tan}}/p}}\longrightarrow\infty.
\tag{S7.18}
\end{equation}
 It implies regular squared target risk \(O(n^{-1})\) if and only if

\begin{equation}
m\varepsilon^2=\Omega(n^{k_{\mathrm{tan}}/q}).
\tag{S7.19}
\end{equation}
 A constant multiple at the first threshold can alter constants
without giving a little-oh improvement. At gain zero, any auxiliary
count supplies only ancillary noise. The primary floor remains even with
unbounded effective auxiliary precision.

For \(m=n\) and \(\varepsilon_n=\varepsilon_0n^{-\beta}\), where
\(0<\varepsilon_0\le E_*\) is fixed and \(\beta\ge0\), 
\begin{equation}
\mathcal R_{\varepsilon_n}(n,n)\asymp n^{-\gamma(\beta)},\qquad
\gamma(\beta)=\min\{1,\max(q/p,q(1-2\beta)/k_{\mathrm{tan}})\}.
\tag{S7.20}
\label{eq:s7-20}
\end{equation}
 The fixed factor \(\varepsilon_0\) keeps this triangular array inside
the declared gain interval, including at \(\beta=0\). Uniformity of
Theorem 5, rather than separate fixed-gain statements, justifies the
substitution.

\begin{figure}
\centering
\includegraphics[width=\linewidth]{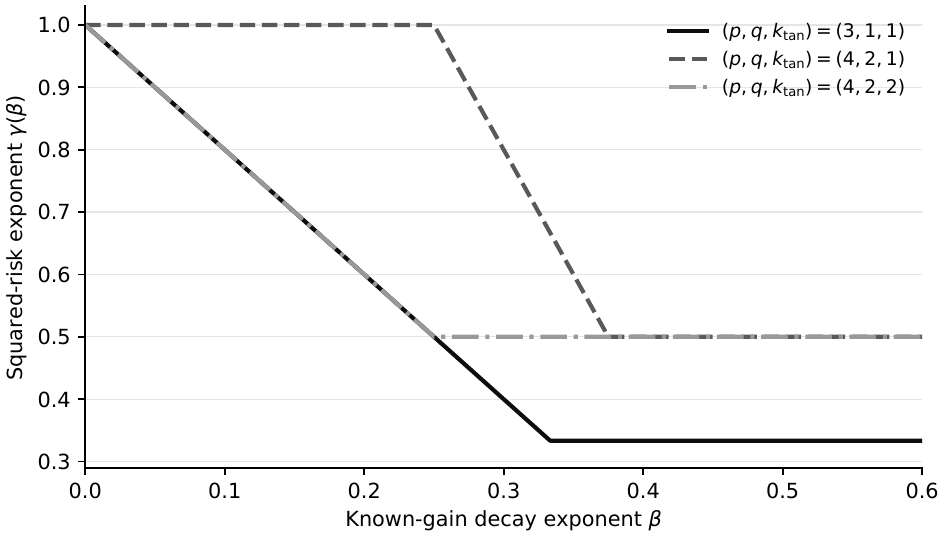}
\caption{Evaluation of the proved exponent in \eqref{eq:s7-20} for three fixed
contact-order triples. The gain is
\(\varepsilon_n=\varepsilon_0n^{-\beta}\) with fixed admissible
\(\varepsilon_0>0\), and \(m=n\). Curves are formula evaluations, not
fitted simulation slopes.}
\end{figure}

For fixed positive gain and scalar cost \(B=3n+m\), the best attainable
squared-risk order is 
\begin{equation}
B^{-\min\{1,\max(q/p,q/k_{\mathrm{tan}})\}}.
\tag{S7.21}
\end{equation}
 The count inequalities \(n\le B/3\), \(m\le B\) give the lower order;
taking both counts proportional to \(B\) attains it. If
\(k_{\mathrm{tan}}<q\), the regular order also follows with
\(n\asymp B\) and \(m\asymp B^{k_{\mathrm{tan}}/q}\), after integer
rounding. If \(k_{\mathrm{tan}}=q\), it requires an auxiliary count of
order \(B\). If \(k_{\mathrm{tan}}>q\), regular \(B^{-1}\) risk is
impossible in this architecture. These are order conclusions, without an
optimal fraction or exact minimax constant.

Finally, put \(z_n=h n^{-1/(2p)}\) in \eqref{eq:s7-4}, where \(h>0\) is fixed,
and assume \(w_n/n^{k_{\mathrm{tan}}/p}\to\lambda\in[0,\infty)\). The
exact Gaussian divergence gives 
\[
\operatorname{KL}(P_{+,n}\Vert P_{-,n})\longrightarrow
2\{h^{2p}+\lambda h^{2k_{\mathrm{tan}}}\},
\]
 
\begin{equation}
p_{\mathrm{binary},n}\longrightarrow
\Phi\big(-\sqrt{h^{2p}+\lambda h^{2k_{\mathrm{tan}}}}\big).
\tag{S7.22}
\end{equation}
 Indeed the squared whitened mean distance is
\(4(1+z_n)(nz_n^{2p}+w_nz_n^{2k_{\mathrm{tan}}})\), and the equal-prior
error for a specified real Gaussian pair is \(\Phi\) evaluated at minus
half the whitened distance, which is the square root of the displayed
squared distance. This is an exact limiting binary calculation, not an
exact composite minimax constant or an estimator limit distribution.

\subsection{Scope}
\label{app:s7-7}

The argument depends on a directly observed shape coordinate, a nuisance
magnitude bounded away from zero in the difficult region, and scalar
sign channels. It is not a theorem for arbitrary failures of nonparallel
branch separation, unknown or sign-indefinite gain, varying degrees,
growing dimension, unknown covariance, correlated fresh noise, nuisance
drift, adaptive acquisition, or an unverified physical forward
reduction. The positive-gain improvement and zero-gain boundary coexist
with target identification and positive local Fisher matrices throughout
the fixed core model. Their difference is controlled by how two
different nuisance branches approach one another in observation space
and by the target's order of vanishing across those branches.

\clearpage
\phantomsection
\addcontentsline{toc}{section}{References}
\begingroup
\small
\setlength{\bibsep}{4pt plus 1pt}
\bibliographystyle{unsrtnat}
\bibliography{references}

@article{Donoho1994,
  author = {Donoho, David L.},
  title = {{Statistical Estimation and Optimal Recovery}},
  journal = {The Annals of Statistics},
  year = {1994},
  volume = {22},
  number = {1},
  pages = {238--270},
  doi = {10.1214/aos/1176325367},
}

@article{PolyanskiyWu2026,
  author = {Polyanskiy, Yury and Wu, Yihong},
  title = {{Dualizing Le Cam's method for functional estimation I: General theory}},
  journal = {The Annals of Statistics},
  year = {2026},
  volume = {54},
  number = {1},
  pages = {1--24},
  doi = {10.1214/25-aos2498},
}

@article{GoldenshlugerJuditskyNemirovski2015,
  author = {Goldenshluger, Alexander and Juditsky, Anatoli and Nemirovski, Arkadi},
  title = {{Hypothesis testing by convex optimization}},
  journal = {Electronic Journal of Statistics},
  year = {2015},
  volume = {9},
  number = {2},
  pages = {1645--1712},
  doi = {10.1214/15-ejs1054},
}

@article{Guigues2020,
  author = {Guigues, Vincent and Juditsky, Anatoli and Nemirovski, Arkadi},
  title = {{Hypothesis testing via Euclidean separation}},
  journal = {Annales de l'Institut Henri Poincar{\'e}, Probabilit{\'e}s et Statistiques},
  year = {2020},
  volume = {56},
  number = {3},
  pages = {1929--1957},
  doi = {10.1214/19-aihp1022},
}

@article{Juditsky2020,
  author = {Juditsky, Anatoli and Nemirovski, Arkadi},
  title = {{Near-optimal recovery of linear and N-convex functions on unions of convex sets}},
  journal = {Information and Inference: A Journal of the IMA},
  year = {2020},
  volume = {9},
  number = {2},
  pages = {423--453},
  doi = {10.1093/imaiai/iaz011},
}

@article{RobinsonGhrist2012,
  author = {Robinson, Michael and Ghrist, Robert},
  title = {{Topological Localization Via Signals of Opportunity}},
  journal = {IEEE Transactions on Signal Processing},
  year = {2012},
  volume = {60},
  number = {5},
  pages = {2362--2373},
  doi = {10.1109/tsp.2012.2187518},
}

@article{Mallat2014,
  author = {Mallat, Achraf and Gezici, Sinan and Dardari, Davide and Vandendorpe, Luc},
  title = {{Statistics of the MLE and Approximate Upper and Lower Bounds--Part II: Threshold Computation and Optimal Pulse Design for TOA Estimation}},
  journal = {IEEE Transactions on Signal Processing},
  year = {2014},
  volume = {62},
  number = {21},
  pages = {5677--5689},
  doi = {10.1109/tsp.2014.2355776},
}

@article{RaySchmidtHieber2016,
  author = {Ray, Kolyan and Schmidt-Hieber, Johannes},
  title = {{Minimax theory for a class of nonlinear statistical inverse problems}},
  journal = {Inverse Problems},
  year = {2016},
  volume = {32},
  number = {6},
  pages = {065003},
  doi = {10.1088/0266-5611/32/6/065003},
}

@article{Sanz2014,
  author = {Sanz, Javier},
  title = {{Flat functions in Carleman ultraholomorphic classes via proximate orders}},
  journal = {Journal of Mathematical Analysis and Applications},
  year = {2014},
  volume = {415},
  number = {2},
  pages = {623--643},
  doi = {10.1016/j.jmaa.2014.01.083},
}

@article{Sagraloff2016,
  author = {Sagraloff, Michael and Mehlhorn, Kurt},
  title = {{Computing real roots of real polynomials}},
  journal = {Journal of Symbolic Computation},
  year = {2016},
  volume = {73},
  pages = {46--86},
  doi = {10.1016/j.jsc.2015.03.004},
}

@article{KlepPovhVolcic2018,
  author = {Klep, Igor and Povh, Janez and Vol{\v c}i{\v c}, Jurij},
  title = {{Minimizer Extraction in Polynomial Optimization Is Robust}},
  journal = {SIAM Journal on Optimization},
  year = {2018},
  volume = {28},
  number = {4},
  pages = {3177--3207},
  doi = {10.1137/17m1152061},
}

@article{Glaeser1963,
  author = {Glaeser, Georges},
  title = {{Racine carr{\'e}e d'une fonction diff{\'e}rentiable}},
  journal = {Annales de l'Institut Fourier},
  year = {1963},
  volume = {13},
  number = {2},
  pages = {203--210},
  doi = {10.5802/aif.146},
}

@article{Lindsey1966,
  author = {Lindsey, W. C.},
  title = {{Phase-Shift-Keyed Signal Detection with Noisy Reference Signals}},
  journal = {IEEE Transactions on Aerospace and Electronic Systems},
  year = {1966},
  volume = {AES-2},
  number = {4},
  pages = {393--401},
  doi = {10.1109/taes.1966.4501788},
}

@article{Cui2015,
  author = {Cui, Guolong and Liu, Jun and Li, Hongbin and Himed, Braham},
  title = {{Signal detection with noisy reference for passive sensing}},
  journal = {Signal Processing},
  year = {2015},
  volume = {108},
  pages = {389--399},
  doi = {10.1016/j.sigpro.2014.09.034},
}

@book{Tsybakov2009,
  author = {Tsybakov, Alexandre B.},
  title = {Introduction to Nonparametric Estimation},
  series = {Springer Series in Statistics},
  publisher = {Springer},
  address = {New York},
  year = {2009},
  doi = {10.1007/b13794},
}

@misc{DLMF,
  author = {{National Institute of Standards and Technology}},
  title = {{NIST Digital Library of Mathematical Functions}},
  year = {2026},
  note = {Version 1.2.7, released June 15, 2026. Equations 10.32.2 and 10.40.1},
  url = {https://dlmf.nist.gov/},
}

@article{NeymanPearson1933,
  author = {Neyman, Jerzy and Pearson, Egon S.},
  title = {On the problem of the most efficient tests of statistical hypotheses},
  journal = {Philosophical Transactions of the Royal Society of London. Series A},
  year = {1933},
  volume = {231},
  number = {694--706},
  pages = {289--337},
  doi = {10.1098/rsta.1933.0009},
}
\endgroup
\end{document}